\documentclass[11pt,reqno]{amsart}
\usepackage{amssymb}
\usepackage[all]{xy}
\usepackage{hyperref}
\usepackage{enumitem}
\usepackage{tikz-cd} 
\usepackage{adjustbox} 
\setlength{\textheight}{50pc} \setlength{\textwidth}{33pc}

\newtheorem{thm}{Theorem}[section]
\newtheorem{lem}[thm]{Lemma}
\newtheorem{prop}[thm]{Proposition}
\newtheorem{cor}[thm]{Corollary}

\theoremstyle{definition}
\newtheorem{dfn}[thm]{Definition}

\theoremstyle{remark}
\newtheorem{remark}[thm]{Remark}


\newcommand{\CA}{{\mathcal{A}}}

\newcommand{\CF}{{\mathcal{F}}}
\newcommand{\CH}{{\mathcal{H}}}
\newcommand{\CS}{{\mathcal{S}}}
\newcommand{\CI}{{\mathcal{I}}}
\newcommand{\CJ}{{\mathcal{J}}}

\newcommand{\CL}{{\mathcal{L}}}
\newcommand{\CU}{{\mathcal{U}}}
\newcommand{\CT}{{\mathcal{T}}}

\newcommand{\CB}{{\mathcal{B}}}
\newcommand{\CO}{{\mathcal{O}}}

\newcommand{\CR}{{\mathcal{R}}}

\newcommand{\af}{\alpha}
\newcommand{\bt}{\beta}
\newcommand{\gm}{\gamma}

\newcommand{\ld}{\lambda}

\newcommand{\Z}{{\mathbb{Z}}}

\newcommand{\C}{{\mathbb{C}}}
\newcommand{\N}{{\mathbb{N}}}

\newcommand{\reg}{{\operatorname{reg}}}

\begin{document}


\title[ K-Theory and Structural Properties of $C^*$-Algebras Associated with RGBDS ]
{ K-Theory and Structural Properties of $C^*$-Algebras Associated with Relative Generalized Boolean Dynamical Systems }

\author[T. M. Carlsen]{Toke Meier Carlsen}

\address{ Køge, Denmark 
} \email{toke.carlsen\-@\-gmail.\-com}

\author[E. J. Kang]{Eun Ji Kang$^{\dagger}$}
\address{
Research Institute of Mathematics, Seoul National University, Seoul 08826, 
Korea} \email{kkang3333\-@\-gmail.\-com}

\subjclass[2000]{46L05, 46L55, 46L80, 46L85}

\keywords{Boolean Dynamical Systems, K-theory, gauge-invariant ideals, Morita equivalence, $K_0$-liftable, real rank zero}

\thanks{The second author was supported by Basic Science Research Program through the 
National Research Foundation of Korea(NRF) funded by the Ministry of Education(RS-2023-00238961) and the Ministry of Science and ICT(NRF-2022M3H3A1098237).}

\begin{abstract}
We present an explicit formula for the $K$-theory of the $C^*$-algebra associated with a relative generalized Boolean dynamical system $(\CB, \CL, \theta, \CI_\af; \CJ)$. In particular, we find concrete generators for the $K_1$-group of $C^*(\CB, \CL, \theta, \CI_\af; \CJ)$. 
We also prove that every gauge-invariant ideal of $C^*(\CB, \CL, \theta, \CI_\af; \CJ)$ is Morita equivalent to a $C^*$-algebra of a relative generalized Boolean dynamical system.

As a structural application, we show that if the underlying Boolean dynamical system $(\CB, \CL, \theta)$ satisfies Condition (K), then the associated $C^*$-algebra is $K_0$-liftable. Furthermore,  we deduce that if $C^*(\CB, \CL, \theta, \CI_\af; \CJ)$ is separable and purely infinite, then it has real rank zero.

\end{abstract}

\maketitle

\setcounter{equation}{0}
\section{Introduction}

The study of $C^*$-algebras arising from symbolic and combinatorial structures has provided a rich framework for exploring the complex and abstract properties of general $C^*$-algebras. Frameworks such as graph algebras, ultragraph algebras, labeled graph algebras, and higher-rank graph algebras have played a central role in this development. These models offer tractable settings that enable deep insights into the internal structure of the corresponding algebras.

In this context, the \emph{relative generalized Boolean dynamical systems}, introduced in \cite{CaK2}, provide a unifying and flexible generalization that encompasses many previously studied constructions. These systems add ideal data to the underlying Boolean dynamics, thereby broadening the class of $C^*$-algebras accessible to combinatorial methods.  A fundamental problem in the study of such algebras is the computation of their $K$-groups, which serve as powerful invariants for classification and structural analysis.

In this paper, we investigate the $K$-theoretic and structural properties of $C^*$-algebras associated with relative generalized Boolean dynamical systems. 
The $C^*$-algebra associated with a relative generalized Boolean dynamical system was introduced in \cite{CaK2} as a relative Cuntz--Pimsner algebra arising from a $C^*$-correspondence. 
Building on the ideas in \cite{BaCaPa2017} and utilizing Katsura's general framework for Cuntz--Pimsner algebras developed in \cite{Ka2004c}, we compute the $K$-groups of our algebras as the kernel and cokernel of a certain map induced by the associated $C^*$-correspondence.

Beyond identifying the $K$-groups, a central goal of this paper is to construct an explicit group isomorphism between the kernel of the map and the $K_1$-group of the $C^*$-algebra. While the $K_0$-group admits a canonical set of generators, describing generators for the $K_1$-group requires a more delicate and involved analysis. Our construction is inspired by the techniques developed in \cite{CET}, but the added complexity of relative generalized Boolean dynamical systems demands a substantially more intricate approach.

This paper also develops a detailed analysis of the ideal structure of $C^*$-algebras associated with relative generalized Boolean dynamical systems, focusing particularly on gauge-invariant ideals and their classification up to Morita equivalence. A central theme of this analysis is to understand how such gauge-invariant ideals reflect the internal combinatorial and dynamical structure of the system.

To this end, we introduce a canonical extension procedure that assigns to each relative generalized Boolean dynamical system $(\CB, \CL, \theta, \CI_\alpha; \CJ)$ an extended system $(\tilde{\CB}, \CL, \tilde{\theta}, \tilde{\CI}_\alpha; \tilde{\CJ})$, together with a natural injective map $\iota:\CB \hookrightarrow \tilde{\CB}$.
 In Theorem~\ref{prop:one}, we show that this extension induces a $*$-isomorphism between the corresponding $C^*$-algebras, allowing us to replace the original system with its extension without loss of information.
 
  This framework becomes especially powerful in the study of gauge-invariant ideals. Given such an ideal, we construct a suitable subsystem of the extended Boolean dynamical system whose associated $C^*$-algebra naturally embeds as a full hereditary subalgebra of the ideal. This construction, detailed in Theorem~\ref{gii}, leads to a Morita equivalence, allowing ideal-theoretic questions to be reduced to more tractable subsystems where existing structural results can be applied effectively.

Taking advantage of this structural insight,  we employ Morita equivalences alongside the established $K$-theoretic framework to prove that when the underlying Boolean dynamical system satisfies Condition~(K), the associated $C^*$-algebra is $K_0$-liftable. This property is important as it allows the lifting of projections in $K_0$, thereby facilitating finer classification and structural results.
Notably, as a direct consequence of these developments, we establish that every separable, purely infinite $C^*$-algebra within this class has real rank zero. This stands in contrast to the general landscape of purely infinite $C^*$-algebras, where the real rank can often be greater than zero. The real rank zero property implies a highly regular and well-behaved structure, which has numerous consequences for classification theory and the understanding of these algebras.

The paper is organized as follows. In Section~\hyperref[preliminary]{2}, we provide a comprehensive review of the necessary background on relative generalized Boolean dynamical systems and their associated \(C^*\)-algebras. Section~\hyperref[K-theory]{3} focuses on computing the 
$K$-groups of these algebras and providing an explicit isomorphism for the 
$K_1$-group.
 In Section~\hyperref[Morita]{4}, we rigorously define a canonical extension of relative generalized Boolean dynamical systems and introduce appropriate subsystems corresponding to gauge-invariant ideals. We then prove that the associated $C^*$-algebras of these subsystems embed as full hereditary subalgebras of the ideals, establishing Morita equivalences.
Finally, Section~\hyperref[$K_0$-liftability]{5} establishes the \(K_0\)-liftability property under Condition~(K) and describes related structural consequences.

\section{Preliminaries}\label{preliminary}

\subsection{Boolean algebras and Boolean dynamical systems}

A lattice $\mathcal{B}$ is said to be a \textit{Boolean algebra} if it is distributive, relatively complemented, and possesses a least element. For elements $A, B \in \mathcal{B}$, the meet (or intersection) and join (or union) are denoted by $A \cap B$ and $A \cup B$, respectively. The relative complement of $A$ in $B$ is written as $B \setminus A$, and the least element of $\mathcal{B}$ is denoted by $\emptyset$.
The symbol \( \subseteq \) denotes the partial order  on \(\mathcal{B}\)
and say that $A$ is a \emph{subset} of $B$ whenever $A \subseteq B$.

A nonempty subset \(\mathcal{I} \subseteq \mathcal{B}\) is called an \emph{ideal} if it is closed under finite unions, that is, for any \(A, B \in \mathcal{I}\), the union \(A \cup B\) belongs to \(\mathcal{I}\), and if it is downward closed: whenever \(A \in \mathcal{I}\) and \(B \subseteq A\), then \(B \in \mathcal{I}\) as well. Given \(A \in \mathcal{B}\), we define the ideal \(\mathcal{I}_A\) generated by \(A\) as
$
\mathcal{I}_A := \{ B \in \mathcal{B} : B \subseteq A \}.
$

When \(\mathcal{I}\) is an ideal in the Boolean algebra \(\mathcal{B}\), we define a relation \(\sim\) on \(\mathcal{B}\) by
$
A \sim B \quad \text{if and only if} \quad A \cup A' = B \cup B' \text{ for some } A', B' \in \mathcal{I}.
$
This relation is an equivalence relation on \(\mathcal{B}\). The equivalence class of an element \(A \in \mathcal{B}\) with respect to \(\mathcal{I}\) is denoted by \([A]_\mathcal{I}\), or simply \([A]\) when the ideal is clear from context. The quotient set
$
\mathcal{B} / \mathcal{I} := \{ [A]_\mathcal{I} : A \in \mathcal{B} \}
$
inherits a Boolean algebra structure via the operations
\[
[A]_\mathcal{I} \cap [B]_\mathcal{I} := [A \cap B]_\mathcal{I}, \quad
[A]_\mathcal{I} \cup [B]_\mathcal{I} := [A \cup B]_\mathcal{I}, \quad
[A]_\mathcal{I} \setminus [B]_\mathcal{I} := [A \setminus B]_\mathcal{I}.
\]
A nonempty subset \(\xi \subseteq \mathcal{B}\) is said to be a \emph{filter} if it 
satisfies the following conditions: \(\emptyset \notin \xi\), it is closed under finite intersections, that is, \(A \cap B \in \xi\) whenever \(A, B \in \xi\), and it is upward closed in the sense that whenever \(A \in \xi\) and \(A \subseteq B\), then \(B \in \xi\).
 A filter is called  an \emph{ultrafilter} if it is maximal with respect to inclusion among all filters. A \emph{prime filter} is a filter with the additional property that whenever \(A \cup B \in \xi\), then either \(A \in \xi\) or \(B \in \xi\). In a Boolean algebra \(\mathcal{B}\), a filter is an ultrafilter if and only if it is prime.
We denote by \(\widehat{\mathcal{B}}\) the set of all ultrafilters on \(\mathcal{B}\). For each \(A \in \mathcal{B}\), define
\[
Z(A) := \{ \xi \in \widehat{\mathcal{B}} \mid A \in \xi \}.
\]
The family \(\{ Z(A) : A \in \mathcal{B} \}\) forms a basis for a topology on \(\widehat{\mathcal{B}}\). Equipped with this topology, \(\widehat{\mathcal{B}}\) becomes a totally disconnected, locally compact Hausdorff space. Moreover, each set \(Z(A)\) is both compact and open.

 Let \(\mathcal{B}, \mathcal{B}'\) be Boolean algebras. A map \(\phi: \mathcal{B} \to \mathcal{B}'\) is called a \emph{Boolean homomorphism} if for all \(A, B \in \mathcal{B}\), it satisfies
$$
\phi(A \cap B) = \phi(A) \cap \phi(B), \quad
\phi(A \cup B) = \phi(A) \cup \phi(B), \quad ~\text{and}~
\phi(A \setminus B) = \phi(A) \setminus \phi(B).
$$
A Boolean homomorphism \(\theta: \mathcal{B} \to \mathcal{B}\) satisfying \(\theta(\emptyset) = \emptyset\) is called an \emph{action} on \(\mathcal{B}\).

Given a set \(\mathcal{L}\), we define finite words over \(\mathcal{L}\) as follows: for each \(n \in \mathbb{N}\), let \(\mathcal{L}^n := \{ \alpha_1 \cdots \alpha_n \mid \alpha_i \in \mathcal{L} \}\), and define \(\mathcal{L}^0 := \{ \emptyset \}\), \(\mathcal{L}^* := \bigcup_{n \geq 0} \mathcal{L}^n\). For any \(\alpha \in \mathcal{L}^n\), we write \(|\alpha| := n\).
The concatenation of two words \(\alpha = \alpha_1 \cdots \alpha_n\) and \(\beta = \beta_1 \cdots \beta_m\) in \(\mathcal{L}^*\) is denoted by \(\alpha\beta := \alpha_1 \cdots \alpha_n \beta_1 \cdots \beta_m\). By convention, concatenation with the empty word \(\emptyset\) leaves the other word unchanged. For \(\alpha \in \mathcal{L}^*\) and \(k \in \mathbb{N}\), we define \(\alpha^k\) to be the \(k\)-fold concatenation of \(\alpha\) with itself, and set \(\alpha^0 := \emptyset\).
For a word \(\alpha = \alpha_1 \cdots \alpha_n\), the subword from position \(i\) to \(j\) (\(1 \leq i \leq j \leq n\)) is written as \(\alpha_{i,j} := \alpha_i \cdots \alpha_j\), with \(\alpha_{i,i} := \alpha_i\).

A \emph{Boolean dynamical system} is a triple \((\mathcal{B}, \mathcal{L}, \theta)\) where \(\mathcal{B}\) is a Boolean algebra, \(\mathcal{L}\) is a set, and \(\theta = (\theta_\alpha)_{\alpha \in \mathcal{L}}\) is a family of actions on \(\mathcal{B}\). For any nonempty word \(\beta = \beta_1 \cdots \beta_n \in \mathcal{L}^*\), we define the composite action
$
\theta_\beta := \theta_{\beta_n} \circ \cdots \circ \theta_{\beta_1},
$
and let \(\theta_{\emptyset} := \mathrm{Id}\), the identity map.

Given \(B \in \mathcal{B}\), define
$
\Delta_B^{(\CB, \CL,\theta)} := \{ \alpha \in \mathcal{L} \mid \theta_\alpha(B) \neq \emptyset \}.
$
 We say that a set \(A \in \mathcal{B}\) is \emph{regular} if for every nonempty subset \(B \subseteq A\), we have \(0 < |\Delta_B^{(\CB, \CL,\theta)}| < \infty\).
The collection of regular elements is denoted by \(\mathcal{B}_{\mathrm{reg}}\), which forms an ideal in \(\mathcal{B}\).

A Boolean dynamical system \((\mathcal{B}, \mathcal{L}, \theta)\) is said to satisfy \emph{Condition (K)} if there is no triple \(( (\beta, \eta), A )\), where \((\beta, \eta)\) is an \emph{ultrafilter cycle}, that is, \(\beta \in \mathcal{L}^* \setminus \{\emptyset\}\), \(\eta \in \widehat{\mathcal{B}}\), and \(\theta_\beta(A) \in \eta\) for all \(A \in \eta\),  such that the following holds:  
for every \(\gamma \in \mathcal{L}^* \setminus \{\emptyset\}\) and \(B \in \mathcal{I}_A\), if \(\theta_\gamma(B) \in \eta\), then \(B \in \eta\) and \(\gamma = \beta^k\) for some \(k \in \mathbb{N}\).

\subsection{Relative generalized Boolean dynamical systems and their $C^*$-algebras}

A \emph{generalized Boolean dynamical system} is a quadruple \((\CB, \CL, \theta, \CI_\alpha)\), where \((\CB, \CL, \theta)\) is a Boolean dynamical system, and \(\{\CI_\alpha\}_{\alpha \in \CL}\) is a family of ideals in \(\CB\) satisfying
\[
\mathcal{R}_\alpha^{(\CB, \CL,\theta)} \subseteq \CI_\alpha \quad \text{for all } \alpha \in \CL,
\]
where
\[
\mathcal{R}_\alpha^{(\CB, \CL,\theta)} := \{ A \in \CB : A \subseteq \theta_\alpha(B) \text{ for some } B \in \CB \}.
\]
A \emph{relative generalized Boolean dynamical system} is a quintuple \((\CB, \CL, \theta, \CI_\alpha; \CJ)\), where \((\CB, \CL, \theta, \CI_\alpha)\) is a generalized Boolean dynamical system and \(\CJ\) is an ideal of the regular part \(\CB_{\reg}\).
Similarly, a \emph{relative Boolean dynamical system} is a quadruple \((\CB, \CL, \theta; \CJ)\), where \((\CB, \CL, \theta)\) is a Boolean dynamical system and \(\CJ\) is an ideal of \(\CB_{\reg}\).

\begin{dfn}\label{def:representation of RGBDS}(\cite[Definition 3.3]{CaK2})
Let $(\CB,\CL,\theta, \CI_\alpha; \CJ)$ be a relative generalized Boolean dynamical system. A {\em  $(\CB, \CL, \theta, \CI_\alpha;\CJ)$-representation} consists of a family of projections $\{P_A:A\in\mathcal{B}\}$ and a family of partial isometries $\{S_{\alpha,B}:\alpha\in\mathcal{L},\ B\in\mathcal{I}_\alpha\}$ in a $C^*$-algebra such that for $A,A'\in\mathcal{B}$, $\alpha,\alpha'\in\mathcal{L}$, $B\in\mathcal{I}_\alpha$ and $B'\in\mathcal{I}_{\alpha'}$,
\begin{enumerate}
\item[(i)] $P_\emptyset=0$, $P_{A\cap A'}=P_AP_{A'}$, and $P_{A\cup A'}=P_A+P_{A'}-P_{A\cap A'}$;
\item[(ii)] $P_AS_{\alpha,B}=S_{\alpha,  B}P_{\theta_\alpha(A)}$;
\item[(iii)] $S_{\alpha,B}^*S_{\alpha',B'}=\delta_{\alpha,\alpha'}P_{B\cap B'}$;
\item[(iv)] $P_A=\sum_{\alpha \in\Delta_A}S_{\alpha,\theta_\alpha(A)}S_{\alpha,\theta_\alpha(A)}^*$ for all  $A\in \mathcal{J}$. 
\end{enumerate}
\end{dfn}

The existence of a universal \((\mathcal{B}, \mathcal{L}, \theta, \mathcal{I}_\alpha; \mathcal{J})\)-representation
$$
\{p_A, s_{\alpha, B} : A \in \mathcal{B},~ \alpha \in \mathcal{L},~ B \in \mathcal{I}_\alpha\}
$$
is established in \cite[Theorem 5.5]{CaK2}. 
We denote by \( C^*(\mathcal{B}, \mathcal{L}, \theta, \mathcal{I}_\alpha; \mathcal{J}) \) the $C^*$-algebra generated by this universal representation.

When \(\mathcal{J} = \emptyset\), such a representation is referred to as a \textit{Toeplitz representation} of \((\mathcal{B}, \mathcal{L}, \theta, \mathcal{I}_\alpha)\), and the corresponding $C^*$-algebra
\[
\mathcal{T}(\mathcal{B}, \mathcal{L}, \theta, \mathcal{I}_\alpha) := C^*(\mathcal{B}, \mathcal{L}, \theta, \mathcal{I}_\alpha; \emptyset)
\]
is called the \textit{Toeplitz $C^*$-algebra} of \((\mathcal{B}, \mathcal{L}, \theta, \mathcal{I}_\alpha)\).

In the case where \(\mathcal{J} = \mathcal{B}_{\mathrm{reg}}\), we simply write
\[
C^*(\mathcal{B}, \mathcal{L}, \theta, \mathcal{I}_\alpha) := C^*(\mathcal{B}, \mathcal{L}, \theta, \mathcal{I}_\alpha; \mathcal{B}_{\mathrm{reg}})
\]
and refer to it as the \textit{$C^*$-algebra of \((\mathcal{B}, \mathcal{L}, \theta, \mathcal{I}_\alpha)\)}.

The universal $C^*$-algebra \(C^*(\mathcal{B}, \mathcal{L}, \theta, \mathcal{I}_\alpha; \mathcal{J})\) admits a strongly continuous action \(\gamma : \mathbb{T} \to \mathrm{Aut}(C^*(\mathcal{B}, \mathcal{L}, \theta, \mathcal{I}_\alpha; \mathcal{J}))\), known as the \emph{gauge action}, characterized by
\[
\gamma_z(p_A) = p_A, \qquad \gamma_z(s_{\alpha, B}) = z s_{\alpha, B},
\]
for all \(z \in \mathbb{T}\), \(A \in \mathcal{B}\), \(\alpha \in \mathcal{L}\), and \(B \in \mathcal{I}_\alpha\).

An ideal \(I \subseteq C^*(\mathcal{B}, \mathcal{L}, \theta, \mathcal{I}_\alpha; \mathcal{J})\) is said to be \emph{gauge-invariant} if \(\gamma_z(I) = I\) for all \(z \in \mathbb{T}\).
It was shown in \cite[Theorem 3.1]{CaK3} that the Boolean dynamical system \((\mathcal{B}, \mathcal{L}, \theta)\) satisfies Condition (K) if and only if every ideal of \(C^*(\mathcal{B}, \mathcal{L}, \theta, \mathcal{I}_\alpha; \mathcal{J})\) is gauge-invariant.

\begin{remark}

Let $
\{P_A, S_{\alpha,B} : A \in \mathcal{B},\ \alpha \in \mathcal{L},\ B \in \mathcal{I}_\alpha\}
$ be a  \((\mathcal{B}, \mathcal{L}, \theta, \mathcal{I}_\alpha; \mathcal{J})\)-representation. 
For each word \(\alpha = \alpha_1 \alpha_2 \cdots \alpha_n \in \mathcal{L}^*\), we define \(\mathcal{I}_\alpha := \mathcal{B}\) if \(\alpha = \emptyset\), and otherwise set
\[
\mathcal{I}_\alpha := \left\{ A \in \mathcal{B} : A \subseteq \theta_{\alpha_2 \cdots \alpha_n}(B) \text{ for some } B \in \mathcal{I}_{\alpha_1} \right\}.
\]
Then, for \(\alpha = \alpha_1 \alpha_2 \cdots \alpha_n \in \mathcal{L}^*\) and \(A \in \mathcal{I}_\alpha\), we define recursively
\[
S_{\alpha, A} := 
\begin{cases}
P_A & \text{if } \alpha = \emptyset, \\
S_{\alpha_1, B} S_{\alpha_2, \theta_{\alpha_2}(B)} \cdots S_{\alpha_n, A} & \text{otherwise},
\end{cases}
\]
where \(B \in \mathcal{I}_{\alpha_1}\) is such that \(A \subseteq \theta_{\alpha_2 \cdots \alpha_n}(B)\).
It then follows from \cite[Remark 3.11]{CaK2} that the $C^*$-algebra generated by the \((\mathcal{B}, \mathcal{L}, \theta, \mathcal{I}_\alpha; \mathcal{J})\)-representation is given by
\[
C^*(P_A, S_{\alpha, B}) := \overline{\operatorname{span}} \left\{ S_{\mu, A} S_{\nu, A}^* : \mu, \nu \in \mathcal{L}^*,\ A \in \mathcal{I}_\mu \cap \mathcal{I}_\nu \right\}.
\]
\end{remark}


\subsection{Gauge-invariant ideals}
Let \((\mathcal{B}, \mathcal{L}, \theta; \mathcal{J})\) be a relative Boolean dynamical system. An ideal \(\mathcal{H} \subseteq \mathcal{B}\)   is called \emph{hereditary} if \(\theta_\alpha(A) \in \mathcal{H}\) for all \(A \in \mathcal{H}\) and \(\alpha \in \mathcal{L}\). It is \emph{\(\mathcal{J}\)-saturated} (or simply \emph{saturated} when \(\mathcal{J} = \mathcal{B}_{\mathrm{reg}}\)) if for every \(A \in \mathcal{J}\), the condition \(\theta_\alpha(A) \in \mathcal{H}\) for all \(\alpha \in \Delta_A\) implies \(A \in \mathcal{H}\).

Given that \(\mathcal{H}\) is a hereditary and \(\mathcal{J}\)-saturated ideal of \(\mathcal{B}\), the maps \(\theta_\alpha\) induce well-defined maps on the quotient Boolean algebra \(\mathcal{B} / \mathcal{H}\) by
\[
\theta_\alpha([A]_{\mathcal{H}}) := [\theta_\alpha(A)]_{\mathcal{H}}.
\]
This gives rise to a quotinet Boolean dynamical system \((\mathcal{B} / \mathcal{H}, \mathcal{L}, \theta)\). The set
\[
\mathcal{B}_{\mathcal{H}} := \{ A \in \mathcal{B} \mid [A]_{\mathcal{H}} \in (\mathcal{B}/\mathcal{H})_{\mathrm{reg}} \}
\]
defines an ideal in \(\mathcal{B}\) that contains both \(\mathcal{H}\) and \(\mathcal{J}\).

Let $(\CB, \CL, \theta, \CI_\alpha; \CJ)$ be a relative generalized Boolean dynamical system. Suppose $\CH$ is a hereditary and $\CJ$-saturated ideal of $\CB$, and let $\CS$ be an ideal in $\CB_\CH$ satisfying $\CH \cup \CJ \subseteq \CS$. We define the ideal $I_{(\CH, \CS)} \subseteq C^*(\CB, \CL, \theta, \CI_\alpha; \CJ)$ as the one generated by
\[
\bigl\{ p_A - \sum_{\alpha \in \Delta_{[A]_\CH}} s_{\alpha, \theta_\alpha(A)} s_{\alpha, \theta_\alpha(A)}^* : A \in \CS \bigr\}.
\]

Given any ideal $I$ in $C^*(\CB, \CL, \theta, \CI_\alpha; \CJ)$, we associate two subsets of $\CB$:
\[
\CH_I := \{ A \in \CB : p_A \in I \}, \quad 
\CS_I := \left\{ A \in \CB_{\CH_I} : p_A - \sum_{\alpha \in \Delta_{[A]_{\CH_I}}} s_{\alpha, \theta_\alpha(A)} s_{\alpha, \theta_\alpha(A)}^* \in I \right\}.
\]
It follows that $\CH_I$ is hereditary and $\CJ$-saturated, and $\CS_I$ is an ideal in $\CB_{\CH_I}$ satisfying $\CH_I \cup \CJ \subseteq \CS_I$. Furthermore, we always have $I_{(\CH_I, \CS_I)} \subseteq I$, and equality holds if and only if $I$ is gauge-invariant.

The assignment $(\CH, \CS) \mapsto I_{(\CH, \CS)}$ yields a lattice isomorphism between:
\begin{itemize}
    \item the lattice of pairs $(\CH, \CS)$, where $\CH$ is a hereditary $\CJ$-saturated ideal of $\CB$ and $\CS$ is an ideal in $\CB_\CH$ such that $\CH \cup \CJ \subseteq \CS$, with the partial order $(\CH_1, \CS_1) \leq (\CH_2, \CS_2)$ if $\CH_1 \subseteq \CH_2$ and $\CS_1 \subseteq \CS_2$, and
    \item the lattice of gauge-invariant ideals of $C^*(\CB, \CL, \theta, \CI_\alpha; \CJ)$.
\end{itemize}

Additionally, for every such pair $(\CH, \CS)$, there exists a $*$-isomorphism
\[
\phi: C^*(\CB / \CH, \CL, \theta, [\CI_\alpha]; [\CS]) \to C^*(\CB, \CL, \theta, \CI_\alpha; \CJ) / I_{(\CH, \CS)}
\]
satisfying $\phi(p_{[A]}) = p_A + I_{(\CH, \CS)}$ for all $A \in \CB$ and $\phi(s_{\alpha, [B]}) = s_{\alpha, B} + I_{(\CH, \CS)}$ for $\alpha \in \CL$, $B \in \CI_\alpha$, where \([\mathcal{I}_\alpha] := \{ [A]_{\mathcal{H}} \mid A \in \mathcal{I}_\alpha \}\) for each \(\alpha \in \mathcal{L}\) and \([\mathcal{S}] := \{ [A]_{\mathcal{H}} \mid A \in \mathcal{S} \}\).



\section{$K$-Theory of $C^*$-algebras from Boolean Dynamical Systems}\label{K-theory}

The $C^*$-algebra of a relative generalized Boolean dynamical system can be realized as a relative Cuntz--Pimsner algebra of a $C^*$-correspondence, and the $C^*$-algebra of a generalized Boolean dynamical system can be realized as a Cuntz--Pimsner algebra of the $C^*$-correspondence. 
As in \cite{BaCaPa2017}, we shall use \cite[Theorem 8.6 and Proposition 8.7]{Ka2004c} to compute the $K$-theory of our $C^*$-algebras. 
We begin by recalling the construction of the 
$C^*$-correspondence of  a relative generalized Boolean dynamical system given in \cite[Section 5]{CaK2}.

Let $(\mathcal{B}, \mathcal{L}, \theta, \mathcal{I}_\alpha; \mathcal{J})$ be a relative generalized Boolean dynamical system.  
For each \(A \in \mathcal{B}\), define \[\chi_A := 1_{Z(A)} \in C_0(\widehat{\mathcal{B}}).\] By the Stone--Weierstrass theorem, the $C^*$-algebra \(\mathcal{A}(\mathcal{B}, \mathcal{L}, \theta)\), defined as
\[
\mathcal{A}(\mathcal{B}, \mathcal{L}, \theta) := \overline{\operatorname{span}} \{ \chi_A : A \in \mathcal{B} \},
\]
coincides with \(C_0(\widehat{\mathcal{B}})\). For notational convenience, we will simply write
$
\mathcal{A} := \mathcal{A}(\mathcal{B}, \mathcal{L}, \theta) = C_0(\widehat{\mathcal{B}}).
$
For each $\alpha \in \mathcal{L}$, define the ideal
\[
X_\alpha := \overline{\operatorname{span}} \{ \chi_A : A \in \mathcal{I}_\alpha \} \subseteq \mathcal{A}.
\]
Then, $X_\alpha$ becomes a right Hilbert $\mathcal{A}$-module with right action given by the usual multiplication in $\mathcal{A}$ and inner product
\[
\langle x, y \rangle := x^* y \quad \text{for } x, y \in X_\alpha.
\]
Define the right Hilbert $\mathcal{A}$-module
\[
X(\mathcal{B}, \mathcal{L}, \theta, \mathcal{I}_\alpha) := \bigoplus_{\alpha \in \mathcal{L}} X_\alpha
\]
with inner product and right action given by
\[
\left\langle (x_\alpha)_{\alpha \in \mathcal{L}}, (y_\alpha)_{\alpha \in \mathcal{L}} \right\rangle := \sum_{\alpha \in \mathcal{L}} x_\alpha^* y_\alpha, \quad
(x_\alpha)_{\alpha \in \mathcal{L}} \cdot f := (x_\alpha f)_{\alpha \in \mathcal{L}}
\]
for $(x_\alpha)_{\alpha \in \mathcal{L}}, (y_\alpha)_{\alpha \in \mathcal{L}} \in X(\mathcal{B}, \mathcal{L}, \theta, \mathcal{I}_\alpha)$ and $f \in \mathcal{A}$.
It is straightforward to verify that
\begin{align*}
X(\mathcal{B}, \mathcal{L}, \theta, \mathcal{I}_\alpha) 
&= \overline{\operatorname{span}} \{ e_{\alpha, A} : \alpha \in \mathcal{L},\ A \in \mathcal{I}_\alpha \} \\
&= \overline{\operatorname{span}} \{ e_{\alpha, A} : \alpha \in \mathcal{L},\ A \in F\ \text{for some } F \in \mathcal{F} \},
\end{align*}
where each $e_{\alpha, A} := (\delta_{\alpha, \beta} \chi_A)_{\beta \in \mathcal{L}} \in \bigoplus_{\alpha \in \mathcal{L}} X_\alpha$, and $\mathcal{F}$ denotes the collection of finite families of mutually disjoint elements in $\mathcal{I}_\alpha$.

We then have a unique $*$-homomorphism $\phi_\af: \CA \to X_\af$ defined by $$\phi_\af(\chi_A)=\chi_{\theta_\af(A)}$$ for every $A \in \CB$ and for each $\af \in \CL$ (see \cite[Lemma 3.4]{BaCaPa2017}). This induces a $*$-homomorphism
\[
\phi : \mathcal{A} \to \mathfrak{L}\big(X(\mathcal{B}, \mathcal{L}, \theta, \mathcal{I}_\alpha)\big)
\]
defined by
\[
\phi(f)\big((x_\alpha)_{\alpha \in \mathcal{L}}\big) := (\phi_\alpha(f) x_\alpha)_{\alpha \in \mathcal{L}} \quad \text{for } f \in \mathcal{A},
\]
 where $ \mathfrak{L}(X(\CB,\CL,\theta, \CI_\af))$ denotes the $C^*$-algebra of adjointable operators on $X(\CB,\CL,\theta, \CI_\af)$ (see \cite[Lemma 3.6]{BaCaPa2017}).   Thus, $X:=X(\CB,\CL,\theta, \CI_\af)$ is a $C^*$-correspondence over $\CA$.

Let $\mathfrak{K}(X)$ denote the ideal of generalized compact operators on $X$, that is,
\begin{align*}\label{compact} 
\mathfrak{K}(X)=\overline{\operatorname{\operatorname{span}}}\{\Theta_{x,y} \in\mathfrak{L}(X):x,y \in X \},
\end{align*}
where $\Theta_{x,y}$ is the rank-one operator defined by $\Theta_{x,y}(z) := x \langle y, z \rangle$.
 We  define an ideal $J(X)$ of $\CA$ by
$
J(X):=\phi^{-1}(\mathfrak{K}(X)).
$

Let
\[
K_\mathcal{J} := \overline{\operatorname{span}} \{ \chi_A : A \in \mathcal{J} \}
\]
be an ideal of $J(X)$. We denote by
$
\mathcal{O}_{(K_\mathcal{J}, X)}
$
the $C^*$-algebra generated by a universal $K_\mathcal{J}$-coisometric representation of \( X \) (see \cite[page 15]{CaK2}).  
It follows from \cite[Theorem 5.5]{CaK2} that
\[
C^*(\mathcal{B}, \mathcal{L}, \theta, \mathcal{I}_\alpha; \mathcal{J}) \cong \mathcal{O}_{(K_\mathcal{J}, X)}.
\]

\medskip

We also define the ideal
\[
J_{X}
:= \phi^{-1}(\mathfrak{K}(X)) \cap \ker(\phi)^\perp,
\]
which is known to satisfy
\[
J_{X}
= \overline{\operatorname{span}} \{ \chi_A : A \in \mathcal{B}_{\mathrm{reg}} \}
\quad \text{(see \cite[Lemma 5.2(2)]{CaK2})}.
\]
In this case, we have the isomorphism
\[
C^*(\mathcal{B}, \mathcal{L}, \theta, \mathcal{I}_\alpha)
\cong \mathcal{O}_{X}
\quad \text{(see \cite[Corollary 5.6(2)]{CaK2})}.
\]

\subsection{The K-theory of $C^*(\CB,\CL,\theta,\CI_\alpha)$}
To compute the $K$-groups of the $C^*$-algebra associated with a generalized Boolean dynamical system via \cite[Theorem 8.6]{Ka2004c}, we first analyze the map
\[
[X]: K_*\big(J_{X}\big) \to K_*(\mathcal{A}).
\]
We begin by recalling its definition and key properties in the context of the $C^*$-correspondence \( X (= X(\mathcal{B}, \mathcal{L}, \theta, \mathcal{I}_\alpha)) \).
Let
\[
D_X := \mathfrak{K}(X \oplus \mathcal{A})
\]
be the linking algebra of \( X \), as defined in \cite[Definition B.1]{Ka2004c}.
We denote by
\[
\iota_{\mathcal{A}}: \mathcal{A} \hookrightarrow D_X \quad \text{and} \quad \iota_{\mathfrak{K}(X)}: \mathfrak{K}(X) \hookrightarrow D_X
\]
the canonical embeddings.
By \cite[Proposition B.3]{Ka2004c}, the induced map
\[
(\iota_{\mathcal{A}})_*: K_*(\mathcal{A}) \to K_*(D_X)
\]
is an isomorphism.
Then the map
\[
[X]:= (\iota_{\mathcal{A}})_*^{-1} \circ (\iota_{\mathfrak{K}(X)})_* \circ (\phi|_{J_X})_*
\]
is the one induced by the $C^*$-correspondence \( X \), as described in \cite[Definition 8.3 and Remark B.4]{Ka2004c}.

\begin{lem}\label{equiv. in K(D)} Let $(\CB,\CL,\theta, \CI_\af)$ be a generalized Boolean dynamical system,  $\af \in \CL$ and $A \in \CI_\af$. Then for $[\chi_A]_0 \in K_0(\CA)$ and $[\Theta_{e_{\af,A}, e_{\af,A}}]_0 \in K_0(\mathfrak{K}(X))$, we have 
\[(\iota_{\CA})_* ([\chi_A]_0)     =(\iota_{\mathfrak{K}(X)})_*  ([\Theta_{e_{\af,A}, e_{\af,A}}]_0) \]
in $K_0(D_{X})$.
\end{lem}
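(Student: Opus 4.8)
The plan is to realize the two $K_0$-classes as coming from two Murray--von Neumann equivalent projections inside $D_X$, with the equivalence implemented by a single partial isometry built from $e_{\af,A}$. First I would record that both elements in question are genuine projections. Since $A \in \CI_\af \subseteq \CB$, the set $Z(A)$ is compact open in $\widehat{\CB}$, so $\chi_A = 1_{Z(A)}$ is a projection in $\CA = C_0(\widehat{\CB})$. Moreover $\langle e_{\af,A}, e_{\af,A}\rangle = \chi_A^*\chi_A = \chi_A$, whence $e_{\af,A}\cdot\chi_A = e_{\af,A}$, and a short computation using $\Theta_{x,y}\Theta_{u,v} = \Theta_{x\langle y,u\rangle, v}$ then gives $\Theta_{e_{\af,A},e_{\af,A}}^2 = \Theta_{e_{\af,A},e_{\af,A}}$; together with $\Theta_{x,y}^* = \Theta_{y,x}$ this shows $\Theta_{e_{\af,A},e_{\af,A}}$ is a projection in $\mathfrak{K}(X)$. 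Thus it suffices to exhibit a partial isometry $V \in D_X$ with $V^*V = \iota_{\CA}(\chi_A)$ and $VV^* = \iota_{\mathfrak{K}(X)}(\Theta_{e_{\af,A},e_{\af,A}})$.

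Next I would unwind the linking-algebra structure of $D_X = \mathfrak{K}(X \oplus \CA)$. Writing operators on $X \oplus \CA$ as $2 \times 2$ matrices, $\iota_{\mathfrak{K}(X)}$ embeds $\mathfrak{K}(X)$ as the top-left corner and $\iota_{\CA}$ embeds $\CA \cong \mathfrak{K}(\CA)$ (via $a \mapsto L_a$) as the bottom-right corner, while the off-diagonal corner $\mathfrak{K}(\CA, X)$ is canonically identified with $X$ through $x \mapsto \lambda_x$, where $\lambda_x(a) := x\cdot a$ and $\lambda_x^*(\xi) = \langle x, \xi\rangle$. I then take $V$ to be the image of $e_{\af,A}$ in this off-diagonal corner, i.e. $V = \left(\begin{smallmatrix} 0 & \lambda_{e_{\af,A}} \\ 0 & 0\end{smallmatrix}\right)$, which lies in $D_X$ precisely because $\lambda_{e_{\af,A}} \in \mathfrak{K}(\CA, X)$. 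A direct matrix computation then yields $V^*V = \left(\begin{smallmatrix} 0 & 0 \\ 0 & \lambda_{e_{\af,A}}^*\lambda_{e_{\af,A}}\end{smallmatrix}\right)$ with $\lambda_{e_{\af,A}}^*\lambda_{e_{\af,A}}(a) = \langle e_{\af,A}, e_{\af,A}\rangle a = \chi_A a$, so that $V^*V = \iota_{\CA}(\chi_A)$; and $VV^* = \left(\begin{smallmatrix} \lambda_{e_{\af,A}}\lambda_{e_{\af,A}}^* & 0 \\ 0 & 0\end{smallmatrix}\right)$ with $\lambda_{e_{\af,A}}\lambda_{e_{\af,A}}^*(\xi) = e_{\af,A}\langle e_{\af,A}, \xi\rangle = \Theta_{e_{\af,A},e_{\af,A}}(\xi)$, so that $VV^* = \iota_{\mathfrak{K}(X)}(\Theta_{e_{\af,A},e_{\af,A}})$.

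From $V^*V = \iota_{\CA}(\chi_A)$ and $VV^* = \iota_{\mathfrak{K}(X)}(\Theta_{e_{\af,A},e_{\af,A}})$ the two projections are Murray--von Neumann equivalent in $D_X$, so $[\iota_{\CA}(\chi_A)]_0 = [\iota_{\mathfrak{K}(X)}(\Theta_{e_{\af,A},e_{\af,A}})]_0$ in $K_0(D_X)$; since $\iota_{\CA}$ and $\iota_{\mathfrak{K}(X)}$ are $*$-homomorphisms, this is exactly the asserted equality $(\iota_{\CA})_*([\chi_A]_0) = (\iota_{\mathfrak{K}(X)})_*([\Theta_{e_{\af,A},e_{\af,A}}]_0)$. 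The only real obstacle is bookkeeping: one must pin down the identifications $\mathfrak{K}(\CA) \cong \CA$ and $\mathfrak{K}(\CA, X) \cong X$ in the precise convention of \cite[Definition B.1]{Ka2004c}, so that the off-diagonal entry $\lambda_{e_{\af,A}}$ genuinely lands in the compacts and the corners $V^*V$, $VV^*$ match the correct embeddings; once these conventions are fixed, the computation is routine.
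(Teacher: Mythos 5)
Your proof is correct and is essentially identical to the paper's: the partial isometry $V=\left(\begin{smallmatrix} 0 & \lambda_{e_{\af,A}} \\ 0 & 0\end{smallmatrix}\right)$ you construct is exactly the operator $v:(\eta,f)\mapsto(e_{\af,A}f,0)$ used in the paper, with the same computations $V^*V=\iota_{\CA}(\chi_A)$ and $VV^*=\iota_{\mathfrak{K}(X)}(\Theta_{e_{\af,A},e_{\af,A}})$ and the same conclusion via Murray--von Neumann equivalence. Your version merely spells out the linking-algebra bookkeeping that the paper leaves as ``straightforward to check.''
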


\begin{proof}
Let $v: X \times \CA \to  X \times \CA$ be a map given by $ (\eta, f) \mapsto (e_{\af,A}f,0)$, where $e_{\af,A}=(\delta_{\af,\bt}\chi_A)_{\bt \in \CL}$.  
It is straightforward to check that $v \in \mathfrak{K}(X \oplus \CA)$, and that $vv^*=\iota_{\mathfrak{K}(X)}(\Theta_{e_{\af, A},e_{\af, A}})$ and $v^*v=\iota_{\CA}(\chi_A)$. It means that $\iota_{\mathfrak{K}(X)}(\Theta_{e_{\af, A},e_{\af, A}})$ and $\iota_{\CA}(\chi_A)$ are equivalent in $K_0(D_{X})$. So we are done. 
\end{proof}

Since
\[
\mathcal{A} = \overline{\operatorname{span}}\{\chi_A : A \in \mathcal{B}\} ~\text{and}~ 
J_X = \overline{\operatorname{span}}\{\chi_A : A \in \mathcal{B}_{\mathrm{reg}}\},
\]
we have natural isomorphisms
\[
K_0(\mathcal{A}) \cong \operatorname{span}_\mathbb{Z}\{\chi_A : A \in \mathcal{B}\} ~\text{and}~ 
K_0(J_X) \cong \operatorname{span}_\mathbb{Z}\{\chi_A : A \in \mathcal{B}_{\mathrm{reg}}\},
\]
where each characteristic function $\chi_A$ corresponds to the class $[\chi_A]_0$ in the respective $K_0$-group.
For simplicity, we identify $K_0(\mathcal{A})$ with $\operatorname{span}_\mathbb{Z}\{\chi_A : A \in \mathcal{B}\}$ and $K_0(J_X)$ with $\operatorname{span}_\mathbb{Z}\{\chi_A : A \in \mathcal{B}_{\mathrm{reg}}\}$, and write $[\chi_A]_0$ simply as $\chi_A$.

\begin{lem}\label{map Phi} Identifying $K_0(\CA)$ with $\operatorname{span}_\Z\{\chi_A: A \in \CB\}$ and   $K_0(J_{X})$ with $\operatorname{span}_\Z\{\chi_A: A \in \CB_{reg}\}$, the homomorphism $[X]: K_0(J_{X}) \to K_0(\CA)$ induces the map $\Phi: \operatorname{span}_\Z\{\chi_A: A \in \CB_{reg}\} \to \operatorname{span}_\Z\{\chi_A : A \in \CB\}$  determined by 
$$\chi_A \mapsto \sum_{\af \in \Delta_A} \chi_{\theta_\af(A)}~~ \text{for}~ A \in \CB_{reg}.$$
 \end{lem}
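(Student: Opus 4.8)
The plan is to unwind the definition $[X] = (\iota_{\CA})_*^{-1} \circ (\iota_{\mathfrak{K}(X)})_* \circ (\phi|_{J_X})_*$ on a generator $\chi_A$ with $A \in \CB_{reg}$ and reduce everything to Lemma~\ref{equiv. in K(D)}. The first and only substantial step is to identify the adjointable operator $\phi(\chi_A)$ explicitly. Using $\phi(f)\big((x_\beta)_{\beta}\big) = (\phi_\beta(f)x_\beta)_{\beta}$ together with $\phi_\beta(\chi_A) = \chi_{\theta_\beta(A)}$, and the rank-one formula $\Theta_{e_{\af,B}, e_{\af,B}}\big((x_\beta)_\beta\big) = (\delta_{\af,\beta}\chi_B x_\af)_\beta$ (which comes from $\langle e_{\af,B}, (x_\beta)_\beta\rangle = \chi_B x_\af$ and $\chi_B^2 = \chi_B$), I would verify the operator identity
\[
\phi(\chi_A) = \sum_{\af \in \Delta_A} \Theta_{e_{\af,\theta_\af(A)},\, e_{\af,\theta_\af(A)}}.
\]
Indices $\beta \notin \Delta_A$ contribute nothing on either side because $\theta_\beta(A) = \emptyset$ forces $\chi_{\theta_\beta(A)} = 0$. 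The decisive use of the hypothesis is here: regularity of $A$ makes $\Delta_A$ finite, so the right-hand side is a finite sum and genuinely lies in $\mathfrak{K}(X)$, consistent with $\chi_A \in J_X$.

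Next I would record two elementary facts needed to apply the previous lemma termwise. First, each $\theta_\af(A)$ belongs to $\CI_\af$: taking $B = A$ in the definition of $\mathcal{R}_\af^{(\CB, \CL, \theta)}$ gives $\theta_\af(A) \in \mathcal{R}_\af^{(\CB, \CL, \theta)} \subseteq \CI_\af$, so $e_{\af, \theta_\af(A)}$ is defined and Lemma~\ref{equiv. in K(D)} applies with $A$ replaced by $\theta_\af(A)$. Second, for $\af \neq \af'$ the vectors $e_{\af, \theta_\af(A)}$ and $e_{\af', \theta_{\af'}(A)}$ lie in different summands $X_\af, X_{\af'}$ of $X$, whence $\langle e_{\af, \theta_\af(A)}, e_{\af', \theta_{\af'}(A)}\rangle = 0$ and the operators $\Theta_{e_{\af,\theta_\af(A)}, e_{\af,\theta_\af(A)}}$ are mutually orthogonal projections. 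Consequently the $K_0$-class of their sum is the sum of their classes, so
\[
(\phi|_{J_X})_*(\chi_A) = \sum_{\af \in \Delta_A} \big[\Theta_{e_{\af,\theta_\af(A)},\, e_{\af,\theta_\af(A)}}\big]_0 \quad \text{in } K_0(\mathfrak{K}(X)).
\]

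Finally, applying $(\iota_{\mathfrak{K}(X)})_*$ and invoking Lemma~\ref{equiv. in K(D)} on each summand converts $(\iota_{\mathfrak{K}(X)})_*\big[\Theta_{e_{\af,\theta_\af(A)}, e_{\af,\theta_\af(A)}}\big]_0$ into $(\iota_\CA)_*(\chi_{\theta_\af(A)})$; summing and applying the isomorphism $(\iota_\CA)_*^{-1}$ yields $[X](\chi_A) = \sum_{\af \in \Delta_A}\chi_{\theta_\af(A)} = \Phi(\chi_A)$. Since the classes $\chi_A$ with $A \in \CB_{reg}$ generate $K_0(J_X)$ and both $[X]$ and $\Phi$ are group homomorphisms, agreement on generators gives agreement everywhere. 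I expect the only real obstacle to be the first step: pinning down the operator identity for $\phi(\chi_A)$ and seeing that regularity is exactly what forces $\Delta_A$ to be finite so that the sum lands in $\mathfrak{K}(X)$. Once that is in place, the rest is bookkeeping layered on top of Lemma~\ref{equiv. in K(D)}.
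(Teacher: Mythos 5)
Your proposal is correct and follows essentially the same route as the paper's proof: identify $\phi(\chi_A)$ with the finite sum $\sum_{\af\in\Delta_A}\Theta_{e_{\af,\theta_\af(A)},e_{\af,\theta_\af(A)}}$, split the $K_0$-class termwise, apply Lemma~\ref{equiv. in K(D)} to each summand, and conclude from the definition of $[X]$. You merely make explicit several details the paper leaves implicit (the rank-one operator computation, the fact that $\theta_\af(A)\in\mathcal{R}_\af^{(\CB,\CL,\theta)}\subseteq\CI_\af$, and the mutual orthogonality justifying additivity of the classes), which is fine.
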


\begin{proof} Let $A \in \CB_{reg}$. Since $\phi_\af(\chi_A)=\chi_{\theta_\af(A)}$, we have $\phi(\chi_A)= \sum_{\af \in \Delta_A} \Theta_{e_{\af,\theta_\af(A)}, e_{\af, \theta_\af(A)}}$.  It then follows from  Lemma \ref{equiv. in K(D)} that 
\begin{align*} (\iota_{\mathfrak{K}(X)})_* \circ (\phi|_{J_{X}})_* ([\chi_A]_0)&= (\iota_{\mathfrak{K}(X)})_*\Big( \Big[ \sum_{\af \in \Delta_A}\Theta_{e_{\af,\theta_\af(A)},e_{\af,\theta_\af(A)} }\Big]_0\Big) \\
&= \sum_{\af \in \Delta_A}  (\iota_{\mathfrak{K}(X)})_*([\Theta_{e_{\af,\theta_\af(A)},e_{\af,\theta_\af(A)}}]_0) \\
&= \sum_{\af \in \Delta_A}  (\iota_{\CA})_*([\chi_{\theta_\af(A)}]_0).
\end{align*}
Then our result follows from the given identifications and the definition of $[X]$. 
\end{proof}

\begin{lem}\label{K1-zero} Let  $(\CB,\CL,\theta, \CI_\af)$ be a generalized Boolean dynamical system. Then $\CA$ and $J_{X}$ are locally finite dimensional algebras with  $K_1(\CA)=K_1(J_{X})=0$.
\end{lem}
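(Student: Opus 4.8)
The plan is to exhibit both $\CA$ and $J_{X}$ as directed limits of finite-dimensional $C^*$-algebras and then invoke the continuity of $K$-theory. The starting observation is that since each $Z(A)$ is compact open, every $\chi_A = 1_{Z(A)}$ is a projection in $\CA$, and these projections satisfy $\chi_A \chi_{A'} = \chi_{A \cap A'}$ together with $\chi_{A \cup A'} = \chi_A + \chi_{A'} - \chi_{A\cap A'}$ and $\chi_{A \setminus A'} = \chi_A - \chi_{A \cap A'}$. Thus $\operatorname{span}_{\C}\{\chi_A : A \in \CB\}$ is a dense $*$-subalgebra of $\CA$, and likewise $\operatorname{span}_{\C}\{\chi_A : A \in \CB_{\reg}\}$ is dense in $J_{X}$.

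First I would show that any finite collection $\chi_{A_1}, \dots, \chi_{A_n}$ lies in a finite-dimensional $*$-subalgebra. Let $\CB_0$ be the finite Boolean subalgebra of $\CB$ generated by $A_1, \dots, A_n$ under $\cap, \cup, \setminus$. Its atoms form a finite family of mutually disjoint elements, and the corresponding characteristic functions are mutually orthogonal projections whose span is a finite-dimensional commutative $C^*$-algebra $\CA_{\CB_0} \cong \C^m$; since every element of a finite Boolean algebra is a join of the atoms below it, each $\chi_{A_i}$ equals a sum of these atomic projections and so lies in $\CA_{\CB_0}$. Running over all finite subsets, the subalgebras so obtained are directed under inclusion (any two are contained in the one generated by their union), and their union is the dense $*$-subalgebra above. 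Hence $\CA$ is the inductive limit of a directed system of finite-dimensional $C^*$-algebras, i.e.\ locally finite dimensional.

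For $J_{X}$ the same argument applies, and here the key point---and the only place requiring genuine care---is that the Boolean subalgebra generated by $A_1, \dots, A_n \in \CB_{\reg}$ stays inside $\CB_{\reg}$. This is exactly where I would use that $\CB_{\reg}$ is an ideal of $\CB$, as recorded in Section~\ref{preliminary}: setting $U := A_1 \cup \dots \cup A_n \in \CB_{\reg}$, every atom of $\CB_0$ is a subset of $U$ and hence lies in $\CB_{\reg}$ by downward closure. Thus the finite-dimensional subalgebras produced above already sit inside $J_{X}$, exhibiting $J_{X}$ as locally finite dimensional as well.

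Finally, $K_1$ vanishes for both. Each $\CA_{\CB_0}$ is a finite direct sum of matrix algebras (here simply copies of $\C$), so has trivial $K_1$. Since $K$-theory is continuous with respect to inductive limits over directed systems, $K_1(\CA) = \varinjlim K_1(\CA_{\CB_0}) = 0$, and likewise $K_1(J_{X}) = 0$. The main obstacle is thus not conceptual but bookkeeping: confirming that the generated Boolean subalgebras remain finite and, for $J_{X}$, remain inside $\CB_{\reg}$. Both follow directly from the Boolean-algebra and ideal structure recalled in Section~\ref{preliminary}, so I expect the proof to be short.
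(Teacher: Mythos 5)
Your proof is correct, and it diverges from the paper's in a meaningful way on the $K_1$ step. The local finite-dimensionality part is essentially the paper's argument: the paper takes a finite set of projections $\chi_{A_i}$, closes it under products and adjoints, and notes that the resulting span is a finite-dimensional $C^*$-subalgebra; your decomposition into atoms of the generated finite Boolean subalgebra is the same idea in more structured form, and your use of the ideal property of $\CB_{\reg}$ (downward closure plus closure under finite unions) to keep the atoms inside $\CB_{\reg}$ is precisely the point needed for $J_{X}$. Where you genuinely differ is in deducing $K_1 = 0$: the paper argues directly with unitaries --- after unitizing, any unitary $u$ is within distance $\epsilon < 1$ of a unitary $u'$ lying in a finite-dimensional subalgebra containing the unit, connectedness of the unitary group of that finite-dimensional algebra gives $u' \sim_h 1$, and $\|u - u'\| < 1$ gives $u \sim_h u'$, hence $u \sim_h 1$ --- whereas you exhibit $\CA$ and $J_{X}$ as inductive limits of directed systems of finite-dimensional subalgebras and invoke continuity of $K_1$ under arbitrary directed inductive limits. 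Your route buys uniformity: it handles matrices over $\CA$ automatically (the paper's unitary argument as written treats only $1 \times 1$ unitaries and implicitly relies on matrix algebras over locally finite-dimensional algebras being again locally finite dimensional), and it would give the $K_0$ statement for free if one wanted it. The paper's route is more elementary and self-contained, avoiding any appeal to continuity of $K$-theory over possibly uncountable directed systems. Both arguments are complete and valid.
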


\begin{proof}We prove the results for $\CA$ and note that the same arguments hold true for $J_{X}$. Let $F$ be a finite subset of $\CA$. Since $\CA=\overline{\operatorname{span}}\{\chi_A: A \in \CB\}$, we may assume that $F=\{\chi_{A_1}, \cdots, \chi_{A_n}\}$ for some $n \in \N$. Since $\chi_{A_i} $ is projection and $\chi_{A_i}\chi_{A_j}=\chi_{A_i \cap A_j}$ for $1 \leq i,j \leq n$, we further assume that $F$ is closed under multiplication and taking adjoints. Then $C^*(F)=\overline{\operatorname{span}}(F)$ is finite dimensional. Thus, $\CA$ is a locally finite dimensional algebra. 

To prove that $K_1(\CA)=0$, we may assume that $\CA$ is unital by adding 1 to $\CB$ (if necessary).  Let $u$ be a unitary in $\CA$ and $0 < \epsilon < 1$. Then there exists a unitary $u'$ in $span_\Z\{\chi_A: A \in \CB\}$ such that $\|u -u'\|< \epsilon$. 
Since $\C$ is locally finite dimensional, one can choose a finite dimensional subalgebra $U$ of $\CA$ which contains $u'$ and the unit 1. 
Since the unitary group $\CU(U)$ of $U$ is connected, $u \sim_h 1$ in $\CU(U)$, and hence in $\CU(\CA)$. Since  $\|u -u'\|< \epsilon < 1$, we have  $u \sim_h u'$ in $\CU(\CA)$. Thus,  $u \sim_h 1$ in $\CU(\CA)$. The result then follows. 
\end{proof} 

The following is a generalization of \cite[Theorem 4.4]{BaCaPa2017}.

\begin{thm}
Let $(\CB, \CL, \theta, \CI_\af)$ be a generalized Boolean dynamical system, and define the linear map
\[
(1 - \Phi) : \operatorname{span}_\Z\{\chi_A : A \in \CB_{\mathrm{reg}}\} \longrightarrow \operatorname{span}_\Z\{\chi_A : A \in \CB\}
\]
by
\[
(1 - \Phi)(\chi_A) := \chi_A - \sum_{\af \in \Delta_A} \chi_{\theta_\af(A)} \quad \text{for all } A \in \CB_{\mathrm{reg}}.
\]
Then we have
 \begin{align*} K_1(C^*(\CB,\CL,\theta, \CI_\af)) &\cong \ker(1-\Phi), ~\text{and}~ \\
K_0(C^*(\CB,\CL,\theta, \CI_\af)) &\cong {\rm coker}(1-\Phi).
\end{align*}
The $K_0$-isomorphism is given by
\[
[p_A]_0 \mapsto \chi_A + \operatorname{Im}(1 - \Phi) \quad \text{for all } A \in \CB.
\]
\end{thm}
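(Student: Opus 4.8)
The plan is to apply Katsura's six-term exact sequence for the Cuntz--Pimsner algebra $\CO_X$ (\cite[Theorem 8.6]{Ka2004c}) to the correspondence $X = X(\CB,\CL,\theta,\CI_\af)$, using the identification $C^*(\CB,\CL,\theta,\CI_\af) \cong \CO_X$ recalled above. That theorem yields the exact sequence
\begin{multline*}
K_0(J_X) \xrightarrow{\iota_* - [X]} K_0(\CA) \xrightarrow{\pi_*} K_0(\CO_X) \xrightarrow{\partial} K_1(J_X) \\
\xrightarrow{\iota_* - [X]} K_1(\CA) \xrightarrow{\pi_*} K_1(\CO_X) \xrightarrow{\partial} K_0(J_X),
\end{multline*}
where $\iota_*$ is induced by the inclusion $J_X \hookrightarrow \CA$, the maps $\pi_*$ are induced by the canonical embedding $\pi: \CA \to \CO_X$, and $\partial$ are the connecting maps.

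The first step is to identify $\iota_* - [X]$ with $1 - \Phi$. Under the identifications $K_0(\CA) \cong \operatorname{span}_\Z\{\chi_A : A \in \CB\}$ and $K_0(J_X) \cong \operatorname{span}_\Z\{\chi_A : A \in \CB_{\reg}\}$, the inclusion $J_X \hookrightarrow \CA$ induces on $K_0$ the natural embedding $\chi_A \mapsto \chi_A$, so that $\iota_*$ is the inclusion of generators; combining this with $[X] = \Phi$ from Lemma \ref{map Phi} shows that $\iota_* - [X]$ is precisely the map $1 - \Phi$ of the statement. The second step is to invoke Lemma \ref{K1-zero}, which gives $K_1(\CA) = K_1(J_X) = 0$. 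Substituting these zeros collapses the sequence: from the exact segment $K_1(\CA) = 0 \xrightarrow{\pi_*} K_1(\CO_X) \xrightarrow{\partial} K_0(J_X) \xrightarrow{1-\Phi} K_0(\CA)$ we read off that $\partial$ is injective with image $\ker(1-\Phi)$, giving $K_1(\CO_X) \cong \ker(1-\Phi)$; and from $K_0(J_X) \xrightarrow{1-\Phi} K_0(\CA) \xrightarrow{\pi_*} K_0(\CO_X) \xrightarrow{\partial} K_1(J_X) = 0$ we read off that $\pi_*$ is surjective with kernel $\operatorname{Im}(1-\Phi)$, giving $K_0(\CO_X) \cong \operatorname{coker}(1-\Phi)$.

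It then remains to make the $K_0$-isomorphism explicit. Under $C^*(\CB,\CL,\theta,\CI_\af) \cong \CO_X$, the canonical map $\pi$ carries $\chi_A$ to $p_A$, so $\pi_*(\chi_A) = [p_A]_0$; hence the isomorphism $\operatorname{coker}(1-\Phi) \xrightarrow{\sim} K_0(\CO_X)$ induced by $\pi_*$ sends $\chi_A + \operatorname{Im}(1-\Phi) \mapsto [p_A]_0$, and its inverse is the stated map $[p_A]_0 \mapsto \chi_A + \operatorname{Im}(1-\Phi)$. The main work is not in the homological algebra, which is routine once the sequence is set up, but in correctly matching Katsura's abstract connecting map $\iota_* - [X]$ with the combinatorial map $1 - \Phi$ (with the right sign and direction) and in confirming that $\pi$ sends $\chi_A$ to $p_A$ under the identification of $\CA$ with the subalgebra of $\CO_X \cong C^*(\CB,\CL,\theta,\CI_\af)$ generated by the $p_A$; both follow from the explicit construction of $X$ and the universal representation recalled above.
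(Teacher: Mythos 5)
Your proposal is correct and takes essentially the same approach as the paper: the paper's proof likewise identifies $C^*(\CB,\CL,\theta,\CI_\af)$ with $\CO_X$ via \cite[Corollary 5.6(2)]{CaK2} and then cites \cite[Theorem 8.6]{Ka2004c} together with Lemma~\ref{map Phi} (so that $\iota_*-[X]$ becomes $1-\Phi$) and Lemma~\ref{K1-zero} (so that $K_1(\CA)=K_1(J_X)=0$). The only difference is one of exposition: you write out the six-term exact sequence and its collapse explicitly, while the paper leaves this routine step implicit.
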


\begin{proof}By  \cite[Corollary 5.6(2)]{CaK2}, we have $C^*(\CB,\CL,\theta, \CI_\af) \cong \CO_{ X}$. We also have   $K_1(\CA)=K_1(J_{X})=0$ by Lemma \ref{K1-zero}. Then the result follows by 
 \cite[Theorem 8.6]{Ka2004c} and Lemma \ref{map Phi}. 
\end{proof}

\subsection{The K-theory of $C^*(\CB,\CL,\theta,\CI_\alpha;\CJ)$}  
Since $\phi(K_\CJ) \subset \mathfrak{K}(X)$, we have the following 6-term exact sequence by \cite[Proposition 8.7]{Ka2004c}:
$$\xymatrix{
K_0(K_\CJ) \ar[r]^{i_*-[X,K_\CJ]} & K_0 (\CA) \ar[r]^{\pi_*} & K_0(\CO_{(K_\CJ, X)}) \ar[d] & \\ 
 K_1(\CO_{(K_\CJ, X)}) \ar[u] & K_1 (\CA) \ar[l]^{\pi_*} & K_1(K_\CJ) \ar[l]^{i_*-[X,K_\CJ]},&  
}$$
where $\iota: K_\CJ \to \CA$ is the embedding, $\pi: \CA \to \CO_{(K_\CJ, X)}$ is the natural $*$-homomorphism, and $[X, K_\CJ]:K_*(K_\CJ) \to K_*(\CA)$ is defined by 
 $$[X, K_\CJ]:=(\iota_{\CA})_*^{-1} \circ (\iota_{\mathfrak{K}(X)})_* \circ (\phi|_{K_\CJ})_*$$
 (see \cite[Remark B.4]{Ka2004c}).
 As before we  identify  $K_0(K_\CJ)$ with $\operatorname{span}_\Z\{\chi_A: A \in \CJ\}$ and each $[\chi_A]_0 $ with $\chi_A$. Then we have the following.

\begin{prop}\label{K-theory of RGBDS} Let $(\CB,\CL,\theta, \CI_\af; \CJ)$ be a relative generalized Boolean dynamical system and identify $K_0(K_\CJ)$ with $\operatorname{span}_\Z\{\chi_A: A \in \CJ\}$. 
 Then the following hold:  
\begin{enumerate} 
\item  The homomorphism $[X, K_\CJ]: K_0(K_\CJ) \to K_0(\CA)$ induces the map $$\Phi|_\CJ: \operatorname{span}_\Z\{\chi_A: A \in \CJ\} \to \operatorname{span}_\Z\{\chi_A : A \in \CB\}$$ given by 
$$\chi_A \mapsto \sum_{\af \in \Delta_A} \chi_{\theta_\af(A)} ~\text{for}~A \in \CJ.$$
\item  Let $(1-\Phi)|_\CJ: \operatorname{span}_\Z\{\chi_A: A \in \CJ\} \to \operatorname{span}_\Z\{\chi_A: A \in \CB\}$ be the linear map given by  \[
    (1-\Phi)|_\CJ(\chi_A) = \chi_A - \sum_{\af \in \Delta_A} \chi_{\theta_\af(A)} \quad \text{for } A \in \CJ.
    \] Then, we have 
\begin{align*} K_1(C^*(\CB,\CL,\theta, \CI_\af;\CJ)) &\cong \ker((1-\Phi)|_\CJ) ~\text{and}~ \\
K_0(C^*(\CB,\CL,\theta, \CI_\af; \CJ)) &\cong {\rm coker}((1-\Phi)|_\CJ),
\end{align*}
 where the isomorphism on \(K_0\) is given by 
    \[
    [p_A]_0 \mapsto \chi_A + \operatorname{Im}((1-\Phi)|_\CJ), \quad A \in \CJ.
    \]
\end{enumerate}
\end{prop}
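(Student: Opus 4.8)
The plan is to re-run the computation of Lemma~\ref{map Phi} for the restricted map $[X,K_\CJ]$ and then feed the result into the six-term exact sequence displayed just above the statement.

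For part (1), I would observe that $[X,K_\CJ]$ is defined by exactly the same composite $(\iota_{\CA})_*^{-1}\circ(\iota_{\mathfrak{K}(X)})_*\circ(\phi|_{K_\CJ})_*$ as $[X]$, the only difference being that $\phi$ is now restricted to $K_\CJ=\overline{\operatorname{span}}\{\chi_A:A\in\CJ\}$ rather than to $J_X$. Since $\CJ$ is an ideal of $\CB_{\reg}$, we have $\CJ\subseteq\CB_{\reg}$ and hence $K_\CJ\subseteq J_X$, so for $A\in\CJ$ the identity $\phi(\chi_A)=\sum_{\af\in\Delta_A}\Theta_{e_{\af,\theta_\af(A)},e_{\af,\theta_\af(A)}}$ and the equivalence of Lemma~\ref{equiv. in K(D)} both hold verbatim. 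The same three-line computation as in Lemma~\ref{map Phi} then shows that, under the identification $K_0(K_\CJ)\cong\operatorname{span}_\Z\{\chi_A:A\in\CJ\}$, the map $[X,K_\CJ]$ is $\Phi|_\CJ$. This is routine and carries no real obstacle.

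For part (2), the strategy is to collapse the six-term sequence using vanishing $K_1$-terms. First I would note $K_1(\CA)=0$ by Lemma~\ref{K1-zero}, and that the identical argument, now applied to the locally finite dimensional algebra $K_\CJ=\overline{\operatorname{span}}\{\chi_A:A\in\CJ\}$, gives $K_1(K_\CJ)=0$. Next I would identify the horizontal map $i_*-[X,K_\CJ]$ on $K_0$ with $(1-\Phi)|_\CJ$: the inclusion $\iota:K_\CJ\hookrightarrow\CA$ induces $i_*(\chi_A)=\chi_A$, while $[X,K_\CJ](\chi_A)=\sum_{\af\in\Delta_A}\chi_{\theta_\af(A)}$ by part (1), so their difference is precisely $(1-\Phi)|_\CJ(\chi_A)=\chi_A-\sum_{\af\in\Delta_A}\chi_{\theta_\af(A)}$. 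With $K_1(\CA)=K_1(K_\CJ)=0$, the six-term sequence degenerates to the short exact sequence
\[
0 \longrightarrow K_1(\CO_{(K_\CJ,X)}) \longrightarrow K_0(K_\CJ) \xrightarrow{(1-\Phi)|_\CJ} K_0(\CA) \xrightarrow{\pi_*} K_0(\CO_{(K_\CJ,X)}) \longrightarrow 0,
\]
where the leftmost arrow is the connecting homomorphism of the sequence, injective by exactness at $K_1(\CO_{(K_\CJ,X)})$ since $K_1(\CA)=0$. Exactness at $K_0(K_\CJ)$ then yields $K_1(\CO_{(K_\CJ,X)})\cong\ker((1-\Phi)|_\CJ)$, and exactness at $K_0(\CA)$ together with surjectivity of $\pi_*$ yields $K_0(\CO_{(K_\CJ,X)})\cong K_0(\CA)/\operatorname{Im}((1-\Phi)|_\CJ)=\operatorname{coker}((1-\Phi)|_\CJ)$. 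Transporting along the isomorphism $C^*(\CB,\CL,\theta,\CI_\af;\CJ)\cong\CO_{(K_\CJ,X)}$ of \cite[Theorem 5.5]{CaK2} gives the stated isomorphisms.

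Finally, for the explicit $K_0$-formula I would track a generator through $\pi_*$: under the isomorphism of \cite[Theorem 5.5]{CaK2} the natural map $\pi:\CA\to\CO_{(K_\CJ,X)}$ sends $\chi_A$ to $p_A$, so the surjection $\pi_*$ descends to an isomorphism $\operatorname{coker}((1-\Phi)|_\CJ)\xrightarrow{\sim}K_0(\CO_{(K_\CJ,X)})$ whose inverse is $[p_A]_0\mapsto\chi_A+\operatorname{Im}((1-\Phi)|_\CJ)$. The proposition is essentially a bookkeeping exercise built on the earlier lemmas; the only points demanding care are the verification that $K_1(K_\CJ)=0$ and the correct reading of the connecting maps when extracting the kernel and cokernel, but I anticipate no substantive obstacle.
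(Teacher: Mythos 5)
Your proposal is correct and follows essentially the same route as the paper: part (1) re-runs the computation of Lemma~\ref{map Phi} for $\phi|_{K_\CJ}$ (valid since $K_\CJ\subseteq J_X$), and part (2) applies the argument of Lemma~\ref{K1-zero} to get $K_1(K_\CJ)=0$ and then collapses the six-term sequence of \cite[Proposition 8.7]{Ka2004c} into a four-term exact sequence, which is precisely what the paper's terse ``then we have the results from the above 6-term exact sequence'' amounts to. Your version merely makes explicit the exactness bookkeeping and the tracking of $[p_A]_0$ through $\pi_*$ that the paper leaves to the reader.
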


\begin{proof}(1) The same argument used in Lemma \ref{map Phi} gives that 
$(\iota_{\mathfrak{K}(X)})_* \circ (\phi|_{K_\CJ})_* ([\chi_A]_0)= \sum_{\af \in \Delta_A}  (\iota_{\CA})_*([\chi_{\theta_\af(A)}]_0)$ for $A \in \CJ$. 
Then our result follows from the given identifications and the definition of $[X,K_\CJ]$.

(2)  By the same arguments used in Lemma \ref{K1-zero}, we have $K_1(K_\CJ)=0$. Then we have the results from the above 6-term exact sequence. 
\end{proof}

\subsection{A description of the isomorphism class of $K_1$-group} 
Let $(\CB, \CL,\theta, \CI_\af; \CJ)$ be a relative generalized Boolean dynamical system  throughout this section. We   construct a  group isomorphism $\chi_1$ between 
$\ker((1-\Phi)|_\CJ)$ and $K_1(C^*(\CB,\CL,\theta,\CI_\af; \CJ))$.
To do that, for $x=\sum_{A \in F} k_A \chi_A \in \ker((1-\Phi)|_\CJ)$  where $F$ is a finite subset of $\CJ$,  we first define 
\begin{align*} L^+_x &:=\{ (A, \af, i) : A \in F, ~\theta_\af(A) \neq \emptyset,~ 1 \leq i \leq -k_A\} \cup \{(A,i) : A \in F, ~1 \leq i \leq k_A\}, \\
L^-_x &:=\{ (A, \af, i) : A \in F, ~\theta_\af(A) \neq \emptyset, ~1 \leq i \leq k_A\} \cup \{(A,i) : A \in F, ~1 \leq i \leq -k_A\}.
\end{align*}

\begin{prop}\label{same cardinality} When  $x\in \ker((1-\Phi)|_\CJ) $, then for any ultrafilter $\eta \in \widehat{\CB}$,  the sets 
$$L^+_{\eta}:=\{(A,\af,i) \in L^+_x : \theta_\af(A) \in \eta \} \cup \{(A,i) \in L^+_x : A \in \eta \}$$ and 
$$L^-_{\eta}:=\{(A,\af,i) \in L^-_x : \theta_\af(A) \in \eta \} \cup \{(A,i) \in L^-_x : A \in \eta \}$$ 
\noindent
are finite and have the same number of elements.
\end{prop}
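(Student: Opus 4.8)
The plan is to convert the kernel condition into a single numerical identity by evaluating it at the ultrafilter $\eta$, and then to obtain both $|L^+_\eta|$ and $|L^-_\eta|$ by direct counting, so that their equality becomes a mere restatement of that identity.

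First I would dispose of finiteness. Since $F \subseteq \CJ \subseteq \CB_{\reg}$, every $A \in F$ is regular, so $\Delta_A = \{\af \in \CL : \theta_\af(A) \neq \emptyset\}$ is finite; as $F$ is finite and the index $i$ ranges over a bounded set, $L^+_x$ and $L^-_x$ are finite, and therefore so are the subsets $L^+_\eta$ and $L^-_\eta$. (Any $A = \emptyset$ occurring in $F$ has $\chi_\emptyset = 0$ and $\Delta_\emptyset = \emptyset$, contributing to neither set, so may be ignored.)

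Next I would regard each $\chi_A$ as the function $1_{Z(A)} \in C_0(\widehat{\CB})$, for which $\chi_A(\eta) = 1$ precisely when $A \in \eta$. The hypothesis $x \in \ker((1-\Phi)|_\CJ)$ says that the function
\[
(1-\Phi)|_\CJ(x) = \sum_{A \in F} k_A\Bigl(\chi_A - \sum_{\af \in \Delta_A}\chi_{\theta_\af(A)}\Bigr)
\]
is identically zero. Evaluating at $\eta$, and writing $n_A(\eta) := |\{\af \in \Delta_A : \theta_\af(A) \in \eta\}|$ (finite by regularity; note that $\theta_\af(A) \in \eta$ forces $\theta_\af(A) \neq \emptyset$, so $\sum_{\af \in \Delta_A}\chi_{\theta_\af(A)}(\eta) = n_A(\eta)$), this collapses to the scalar identity
\begin{equation}
\sum_{\substack{A \in F \\ A \in \eta}} k_A = \sum_{A \in F} k_A\, n_A(\eta). \tag{$\ast$}
\end{equation}

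Finally I would count the two sets, splitting according to the sign of $k_A$. In $L^+_\eta$ the triples arise only from $A$ with $k_A < 0$ and contribute $-k_A\, n_A(\eta)$ each, while the pairs arise only from $A$ with $k_A > 0$ and $A \in \eta$, contributing $k_A$ each; the situation for $L^-_\eta$ is the mirror image. This gives
\[
|L^+_\eta| = \sum_{\substack{A \in F \\ k_A < 0}}(-k_A)\, n_A(\eta) + \sum_{\substack{A \in F \\ k_A > 0 \\ A \in \eta}} k_A, \qquad
|L^-_\eta| = \sum_{\substack{A \in F \\ k_A > 0}} k_A\, n_A(\eta) + \sum_{\substack{A \in F \\ k_A < 0 \\ A \in \eta}}(-k_A),
\]
and recombining the sign-split sums yields
\[
|L^+_\eta| - |L^-_\eta| = \sum_{\substack{A \in F \\ A \in \eta}} k_A - \sum_{A \in F} k_A\, n_A(\eta),
\]
which vanishes by $(\ast)$. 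The main obstacle is purely the bookkeeping in this last step: one must attach each family of triples and of pairs to the correct sign of $k_A$ and verify that the recombination reproduces exactly the two sides of $(\ast)$. Everything else reduces to the single observation that the kernel relation is an identity of functions on $\widehat{\CB}$ and may therefore be evaluated pointwise at $\eta$.
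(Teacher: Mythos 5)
Your proof is correct and follows essentially the same route as the paper's: both evaluate the kernel identity pointwise at the ultrafilter $\eta$ (the paper via the induced functions $f_x$ and $f_{\Phi|_\CJ(x)}$, you via the scalar identity $(\ast)$), split the coefficients $k_A$ by sign, and observe that $|L^+_\eta|-|L^-_\eta|$ is exactly the difference of the two sides of that identity. Your explicit verification of finiteness, using that elements of $\CJ\subseteq\CB_{\reg}$ have finite $\Delta_A$, is a minor addition that the paper leaves implicit.
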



\begin{proof}       If $x=\sum_{A\in F}k_A\chi_A\in span_\Z \{\chi_A:A\in \CJ\}$, then $x$ induces a continuous function $f_x: \widehat{\CB} \rightarrow \Z$ defined by 
 $f_x(\eta)=\sum_{A\in\eta}k_A$ for $\eta \in \widehat{\CB}$.   
      We then have that $f_{\Phi|_\CJ(x)}(\eta)=\sum_{A \in F}\sum_{\alpha \in \triangle_A: \theta_\alpha(A)\in\eta}k_A$. So if $x \in \ker((1-\Phi)|_\CJ)$, then $x=\Phi|_\CJ(x)$, and hence, 
\begin{eqnarray}\label{in-ker} f_x(\eta) =  \sum_{A\in \eta}k_A= \sum_{A\in F}\sum_{\alpha \in \triangle_A:\theta_\alpha(A)\in\eta}k_A =f_{\Phi|_\CJ(x)}(\eta).
\end{eqnarray} 
     \noindent 
  Put $x_+=\sum_{A\in F:k_A>0}k_A\chi_A$ and $x_-=\sum_{A\in F:k_A<0}k_A\chi_A$. Observe that
\begin{align*} |L^+_{\eta}| &= f_{x_+}(\eta)-f_{\Phi|_\CJ(x_-)}(\eta), \\
|L^-_{\eta}|&=f_{\Phi|_\CJ(x_+)}(\eta)-f_{x_-}(\eta)
\end{align*}
for any ultrafilter $\eta \in \widehat{\CB}$. Since  $f_x(\eta)=f_{x^+}(\eta)+f_{x_-}(\eta)$ and $f_{\Phi|_\CJ(x)}(\eta) =f_{\Phi|_\CJ(x_+)}(\eta)+f_{\Phi|_\CJ(x_-)}(\eta)$, we see that $|L^+_{\eta}|=|L^-_{\eta}|$
 from the equality (\ref{in-ker}).
\end{proof}

Let $\overline{F}:=F\cup\{\theta_\alpha(A):A\in F,\ \alpha\in\mathcal{L}\}$. Choose a finite family $\mathcal{F}\subseteq \CJ$ of mutually disjoint non-empty sets such that each $A\in \overline{F}$ is a disjoint union of sets from $\mathcal{F}$.
We then define  for each  $B \in\mathcal{F}$,  $$L_B^+:=L_\eta^+ ~\text{and}~ L_B^-:=L_\eta^-$$ where $\eta$ is an ultrafilter with $B\in\eta$. 
The sets $L_B^+$ and $L_B^-$ are well-defined for each $B \in\mathcal{F}$. In fact, since $B \in \CF$, we easily see  that for each $A \in \overline{F}$, 
\begin{eqnarray}\label{B in A} A \cap B \neq \emptyset ~\text{if and only if}~   B \subseteq A.\end{eqnarray}
 Thus if
$B\in\mathcal{F}$ and $\eta_1$ and $\eta_2$ are ultrafilters such that $B\in\eta_1$ and $B\in\eta_2$, we have 
$$ \eta_1 \cap \overline{F} = \eta_2 \cap \overline{F}.$$
It then follows that 
 $L_{\eta_1}^+=L_{\eta_2}^+$ and $L_{\eta_1}^-=L_{\eta_2}^-$.

 Denote 
by $h_B$ the common number of elements of $L_B^+$ and $L_B^-$. We then can define 
bijections 
$$\iota_B^+ : L_B^+ \rightarrow \{1,\dots,h_B\}\times\{B\} ~\text{and}~ \iota_B^- : L_B^- \rightarrow \{1,\dots,h_B\}\times\{B\}.$$ 
We also define the following sets  for Definition \ref{PV}:
\begin{align*} I_x^+ &:=\{(A,i,\alpha,B,C):A\in F,\ 1\le i\le k_A,\ \alpha\in\mathcal{L},\ B, C\in\mathcal{F},\ B\subseteq A,\ C\subseteq \theta_\alpha(A)\},\\
 I_x^-&:=\{(A,i,\alpha,B,C):A\in F,\ 1\le i\le -k_A,\ \alpha\in\mathcal{L},\ B, C\in\mathcal{F},\ B\subseteq A,\  C\subseteq \theta_\alpha(A)\}, \\
 J_x^+ &:=\{(A,i,B):A\in F,\ 1\le i\le k_A,\ B\in\mathcal{F},\ B\subseteq A\},\\
 J_x^-&:=\{(A,i,\alpha,C):A\in F,\ 1\le i\le -k_A,\ \alpha\in\mathcal{L},\ C\in\mathcal{F},\ C\subseteq \theta_\alpha(A)\}.
 \end{align*}

\begin{dfn}\label{PV} Suppose that $\mathfrak{A}$ is a $C^*$-algebra which contains a Toeplitz representation  $\{p_A:A\in\mathcal{B}\} \cup \{s_{\af,B}:\alpha\in\mathcal{L}, ~B \in \CI_\af\}$ of
$(\CB, \CL, \theta,\CI_\af)$. With the notation as above, we define two elements $V_x, P_x \in \mathsf{M}_{|N|}(\mathfrak{A})$ by
$$  V_x :=\sum_{I_x^+}p_Bs_{\alpha, \theta_\af(B)} p_C \mathsf{E}_{\iota_B^+(A,i),\iota_C^-(A,\alpha,i)}+\sum_{ I_x^-}p_Cs_{\alpha, \theta_\af(B)}^* p_B  \mathsf{E}_{\iota_C^+(A,\alpha,i),\iota_B^-(A,i)}$$
and
$$ P_x:=\sum_{ J_x^+}p_B  \mathsf{E}_{\iota_B^+(A,i),\iota_B^+(A,i)}+\sum_{J_x^-}p_C  \mathsf{E}_{\iota_C^+(A,\alpha,i),\iota_C^+(A,\alpha,i)}, $$
where $N:=\bigcup_{B\in\mathcal{F}}\{1,\dots,h_B\}\times\{B\}$  and $\{\mathsf{E}_{(i,B),(j,C)}\}_{B,C\in\mathcal{F},\ i\in\{1,\dots,h_B\},\ j\in\{1,\dots,h_C\}}$   are the standard matrix units in $\mathsf{M}_{|N|}(M(\mathfrak{A}))$ where $M(\mathfrak{A})$ is the multiplier algebra of $\mathfrak{A}$.
\end{dfn}

In what follows, we put 
\begin{align*} K_x^+ &:=\{(A,i,\alpha,C):A\in F,\ 1\le i\le k_A,\ \alpha\in\mathcal{L},\ C\in\mathcal{F},\ C\subseteq \theta_\alpha(A)\}, \\
K_x^-&:=\{(A,i,B):A\in F,\ 1\le i\le -k_A,\ B\in\mathcal{F},\ B\subseteq A\},\\
H_x^+ &:=\{(A,i,\alpha,B):A\in F,\ 1\le i\le k_A,\ \alpha\in\Delta_A,\ B\in\mathcal{F},\ B\subseteq A\},\\
H_x^-&:=\{(A,i,\alpha,B):A\in F,\ 1\le i\le -k_A,\ \alpha\in\Delta_A,\ B\in\mathcal{F},\ B\subseteq A\}.
\end{align*}

\begin{lem}\label{pv-prop} If $\{p_A, s_{\af,B}: A\in\mathcal{B}, ~\af \in \CL ~\text{and}~ B \in \CI_\af \}$  is a  Toeplitz representation of
$(\CB, \CL, \theta,\CI_\af)$,  then we have
\begin{align} \label{p} P_x = &\sum_{ K_x^+}p_C \mathsf{E}_{\iota_C^-(A,\alpha,i),\iota_C^-(A,\alpha,i)}+\sum_{ K_x^-}p_B \mathsf{E}_{\iota_B^-(A,i),\iota_B^-(A,i)},\\
\label{v*}V_x^*= &\sum_{ I_x^+} p_{C}s_{\af, \theta_\af(B)}^*p_B \mathsf{E}_{\iota_C^-(A,\alpha,i),\iota_B^+(A,i)}+\sum_{ I_x^-} p_Bs_{\af, \theta_\af(B)} p_C \mathsf{E}_{\iota_B^-(A,i),\iota_C^+(A,\alpha,i)},\\
\label{vv*}V_xV_x^* & = \sum_{ H_x^+} s_{\alpha, \theta_\af(B)} s_{\alpha, \theta_\af(B)}^*  \mathsf{E}_{\iota_B^+(A,i),\iota_B^+(A,i)} 
+  \sum_{ J_x^-}  p_C \mathsf{E}_{\iota_C^+(A,\alpha,i),\iota_{C}^+(A,\af,i)} ,\\
\label{v*v}V_x^*V_x & =\sum_{ K_x^+} p_C \mathsf{E}_{\iota_C^-(A,\alpha,i),\iota_C^-(A,\alpha,i)} + \sum_{H_x^-} s_{\alpha, \theta_\af(B)} s_{\alpha, \theta_\af(B)}^*\mathsf{E}_{\iota_B^-(A,i),\iota_B^-(A,i)}.
\end{align}
\end{lem}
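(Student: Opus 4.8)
The plan is to dispatch \eqref{p} and \eqref{v*} by bookkeeping and to reserve the real computation for \eqref{vv*} and \eqref{v*v}. For \eqref{p}, the point I would stress is that both expressions describe one and the same diagonal matrix. Since each $\iota_B^+$ is a bijection of $L_B^+$ onto $\{1,\dots,h_B\}\times\{B\}$, and the pair-type elements $(A,i)$ of $\bigcup_B L_B^+$ are parametrized exactly by $J_x^+$ while the triple-type elements $(A,\af,i)$ are parametrized by $J_x^-$, the two sums defining $P_x$ together put a projection at every position of $N$, and the projection at a position with second coordinate $B$ is always $p_B$; thus $P_x=\sum_{(j,B)\in N}p_B\,\mathsf{E}_{(j,B),(j,B)}$. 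Running the identical argument through the $\iota^-$-bijections, with $K_x^+$ indexing the triples and $K_x^-$ the pairs of $\bigcup_B L_B^-$, reproduces the right-hand side of \eqref{p}. Identity \eqref{v*} is then the termwise adjoint, using $\mathsf{E}_{r,c}^{*}=\mathsf{E}_{c,r}$ and $(p_B s_{\af,\theta_\af(B)}p_C)^{*}=p_C s_{\af,\theta_\af(B)}^{*}p_B$.

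For the last two identities I would split $V_x=V^{(1)}+V^{(2)}$, where $V^{(1)}$ is the sum over $I_x^+$ (rows at pair-type $\iota^+$-positions, columns at triple-type $\iota^-$-positions) and $V^{(2)}$ the sum over $I_x^-$ (rows at triple-type $\iota^+$-positions, columns at pair-type $\iota^-$-positions). Expanding $V_xV_x^{*}$ and $V_x^{*}V_x$ into four products and using $\mathsf{E}_{r,c}\mathsf{E}_{r',c'}=\delta_{c,r'}\mathsf{E}_{r,c'}$ with the bijectivity of the $\iota^{\pm}$, the two mixed products vanish in each case, because a triple-type position and a pair-type position of the same sign are always distinct. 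Hence $V_xV_x^{*}=V^{(1)}(V^{(1)})^{*}+V^{(2)}(V^{(2)})^{*}$ and $V_x^{*}V_x=(V^{(1)})^{*}V^{(1)}+(V^{(2)})^{*}V^{(2)}$, and in each surviving product the matching of matrix units collapses one index tuple against another, leaving a free summation over the target sets drawn from $\CF$.

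The substance, and the step I expect to be the main obstacle, is simplifying the surviving coefficients and showing that the off-diagonal contributions vanish. I would rely on three consequences of the Toeplitz axioms: $s_{\af,D}p_E=s_{\af,D\cap E}$ for $D\in\CI_\af$ and $E\in\CB$ (whence $p_B s_{\af,\theta_\af(B)}=s_{\af,\theta_\af(B)}$), the orthogonality $s_{\af,D}s_{\af,D'}^{*}=0$ when $D\cap D'=\emptyset$, and $s_{\af,\theta_\af(B)}^{*}s_{\af,\theta_\af(B')}=p_{\theta_\af(B)\cap\theta_\af(B')}$ from (iii); each is derivable directly from (i)--(iii). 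In the first block of $V_xV_x^{*}$, summing the coefficient over $C$ with the refinement identity $\sum_{C\in\CF,\,C\subseteq\theta_\af(A)}p_C=p_{\theta_\af(A)}$ reduces it to $s_{\af,\theta_\af(B)}s_{\af,\theta_\af(B')}^{*}$; the decisive observation is that $\theta_\af$ is a Boolean homomorphism, so $\theta_\af(B)\cap\theta_\af(B')=\theta_\af(B\cap B')=\emptyset$ for distinct $B,B'\in\CF$ and the off-diagonal terms drop out, leaving the diagonal $H_x^+$ sum (with $\af\in\Delta_A$ appearing precisely because a nonempty $C\subseteq\theta_\af(A)$ exists only when $\theta_\af(A)\neq\emptyset$). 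In the second block the coefficient is $\delta_{\af,\af'}\,p_{C\cap\theta_\af(B)\cap C'}$, so distinct $C,C'$ give $p_\emptyset=0$, and the residual $B$-sum collapses through $\sum_{B\in\CF,\,B\subseteq A}p_{C\cap\theta_\af(B)}=p_{C\cap\theta_\af(A)}=p_C$, yielding the $J_x^-$ sum; the computation of $V_x^{*}V_x$ is symmetric and returns the $K_x^+$ and $H_x^-$ sums. The genuine difficulty throughout is keeping the bookkeeping disciplined: tracking, for each of the four index families, whether a given row or column sits at a pair- or triple-type position and in the $+$ or $-$ copy, so that the Kronecker deltas are applied to the correct tuples.
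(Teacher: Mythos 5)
Your proposal is correct and follows essentially the same route as the paper's proof: Equation \eqref{p} by re-indexing the full diagonal through the bijections $\iota_B^{\pm}$, Equation \eqref{v*} as the termwise adjoint, and Equations \eqref{vv*}--\eqref{v*v} by expanding the product, killing the mixed blocks via matrix-unit mismatch, and simplifying the surviving coefficients with the Toeplitz relations, the disjointness of $\CF$ (so $\theta_\af(B)\cap\theta_\af(B')=\emptyset$ for $B\neq B'$), and the refinement identities $\sum_{C\subseteq\theta_\af(A)}p_C=p_{\theta_\af(A)}$. The only cosmetic difference is that you make the splitting $V_x=V^{(1)}+V^{(2)}$ and the vanishing of the cross terms explicit, which the paper leaves implicit in its displayed computation.
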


\begin{proof} To show Equation (\ref{p}), fix $B \in \CF$. First note from (\ref{B in A}) that 
 \begin{align*} L_B^{\pm}  &= \{(A,i) \in  L^{\pm}_x: B\subseteq A\} \cup 
    \{(A,i,\alpha) \in L^{\pm}_x: B\subseteq \theta_\alpha(A)\},
   \end{align*}
where $\eta$ is an ultrafilter containing B.
Since  $|L_B^+|=|L_B^-|$ and $\iota_B^+$ and $\iota_B^-$ are bijections, we see that
\begin{align*} 
P_x &=\sum_{(A,i,B)\in J_x^+}p_B  \mathsf{E}_{\iota_B^+(A,i),\iota_B^+(A,i)}+\sum_{(A,i,\alpha,C)\in J_x^-}p_C  \mathsf{E}_{\iota_C^+(A,\alpha,i),\iota_C^+(A,\alpha,i)}\\
&= \sum_{(A,i)\in L_x^+, B \subseteq A} p_B\mathsf{E}_{\iota_B^+(A,i),\iota_B^+(A,i)} +
\sum_{(A,\af,i) \in L_x^+, C \subseteq \theta_\af(A)} p_C \mathsf{E}_{\iota_C^+(A,\af,i),\iota_C^+(A,\af,i)}\\
&= \sum_{(A',\af',j) \in L_x^-, B \subseteq \theta_{\af'}(A')} p_B \mathsf{E}_{\iota_B^-(A',\af',j),\iota_B^-(A',\af',j)} + \sum_{(A',j)\in L_x^-, C \subseteq A'} p_C\mathsf{E}_{\iota_C^-(A',j),\iota_C^-(A',j)}\\
&=\sum_{(A',j,\af',B) \in K_x^+}p_B \mathsf{E}_{\iota_B^-(A',\alpha',j),\iota_B^-(A',\alpha',j)}+\sum_{(A',j,C) \in K_x^-}p_C \mathsf{E}_{\iota_C^-(A',j),\iota_C^-(A',j)}.
\end{align*}

 Equation (\ref{v*}) is obvious. By using (\ref{v*}), the matrix unit relations, the fact that $\iota_B^{\pm}$ are bijections for each $B \in \CF$ and, the relations in Definition \ref{def:representation of RGBDS}, we have
\begin{align*} &V_xV_x^*\\
&= \Big( \sum_{ I_x^+}p_Bs_{\alpha, \theta_\af(B)} p_C \mathsf{E}_{\iota_B^+(A,i),\iota_C^-(A,\alpha,i)}+\sum_{ I_x^-}p_Cs_{\alpha, \theta_\af(B)}^* p_B  \mathsf{E}_{\iota_C^+(A,\alpha,i),\iota_B^-(A,i)} \Big)  \\
& \times \Big( \sum_{ I_x^+} p_{C'}s_{\alpha', \theta_{\alpha'}(B')}^*p_{B'} \mathsf{E}_{\iota_{C'}^-(A',\alpha',j),\iota_{B'}^+(A',j)}+\sum_{ I_x^-} p_{B'}s_{\alpha', \theta_{\alpha'}(B')} p_{C'} \mathsf{E}_{\iota_{B'}^-(A',j),\iota_{C'}^+(A',\alpha',j)} \Big)\\
&=\sum_{(A,i,\af,B,C),(A,i,\af,B',C) \in I_x^+} p_Bs_{\alpha, \theta_\af(B)} p_C s_{\alpha, \theta_\af(B')}^*p_{B'} \mathsf{E}_{\iota_B^+(A,i),\iota_{B'}^+(A,i)}\\
& \hskip5pc+ \sum_{(A,i,\alpha,B,C),(A,i,\af',B,C')\in I_x^-}  p_C s_{\alpha, \theta_\af(B)}^* p_B s_{\alpha', \theta_{\alpha'}(B)} p_{C'} \mathsf{E}_{\iota_C^+(A,\alpha,i),\iota_{C'}^+(A,\alpha',i)} \\
&=\sum_{(A,i,\alpha,B), (A,i,\af,B') \in H_x^+} p_B s_{\alpha, \theta_\af(B)} p_{\theta_\af(A) }  s_{\alpha, \theta_{\alpha}(B')}^*p_{B'} \mathsf{E}_{\iota_B^+(A,i),\iota_{B'}^+(A,i)} \\
& \hskip5pc + \sum_{(A,i,\alpha,B,C),(A,i,\af',B,C')\in I_x^-} p_C s_{\alpha, \theta_\af(B)}^* s_{\alpha', \theta_{\alpha'}(B)} p_{C'} \mathsf{E}_{\iota_C^+(A,\alpha,i),\iota_{C'}^+(A,\af',i)}\\ 
&=\sum_{(A,i,\alpha,B), (A,i,\af,B') \in H_x^+} p_B  s_{\alpha, \theta_\af(B)}  s_{\alpha, \theta_{\alpha}(B')}^*p_{B'} \mathsf{E}_{\iota_B^+(A,i),\iota_{B'}^+(A,i)} \\
& \hskip 5pc+\sum_{(A,i,\alpha,B,C),(A,i,\af,B,C')\in I_x^-} p_C p_{\theta_\af(B)} p_{C'} \mathsf{E}_{\iota_C^+(A,\alpha,i),\iota_{C'}^+(A,\af,i)} \\
&=\sum_{(A,i,\alpha,B), (A,i,\af,B') \in H_x^+} p_B s_{\alpha, \theta_\af(B)} p_{\theta_{\alpha}(B)}p_{\theta_{\alpha}(B')}  s_{\alpha, \theta_{\alpha}(B')}^*p_{B'} \mathsf{E}_{\iota_B^+(A,i),\iota_{B'}^+(A,i)} \\
& \hskip 5pc+ \sum_{(A,i,\alpha,B,C)\in I_x^-}  p_C p_{\theta_\af(B)} \mathsf{E}_{\iota_C^+(A,\alpha,i),\iota_{C}^+(A,\af,i)}\\
&=  \sum_{(A,i,\alpha,B)\in H_x^+} p_Bs_{\alpha, \theta_\af(B)} s_{\alpha, \theta_\af(B)}^* p_B \mathsf{E}_{\iota_B^+(A,i),\iota_B^+(A,i)} \\
& \hskip 5pc+  \sum_{(A,i,\alpha,C)\in J_x^-}  p_C p_{\theta_\af(A)} \mathsf{E}_{\iota_C^+(A,\alpha,i),\iota_{C}^+(A,\af,i)} \\
&=  \sum_{(A,i,\alpha,B)\in H_x^+} s_{\alpha, \theta_\af(B)} s_{\alpha, \theta_\af(B)}^*  \mathsf{E}_{\iota_B^+(A,i),\iota_B^+(A,i)} +
 \sum_{(A,i,\alpha,C)\in J_x^-}  p_C \mathsf{E}_{\iota_C^+(A,\alpha,i),\iota_{C}^+(A,\af,i)}.  
\end{align*}
The computation for $V_x^*V_x$ is similar.
\end{proof}

\begin{lem}\label{partial iso} If $\{p_A, s_{\af,B}: A\in\mathcal{B}, ~\af \in \CL ~\text{and}~ B \in \CI_\af \}$  is a  Toeplitz representation of
$(\CB, \CL, \theta,\CI_\af)$, then $V_x$ is a partial isometry with $P_xV_x=V_x=V_xP_x$.
\end{lem}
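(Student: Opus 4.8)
The plan is to deduce everything from the four identities in Lemma~\ref{pv-prop}, together with the two descriptions of $P_x$ (the defining one in Definition~\ref{PV} and the alternative form~\eqref{p}). Recall that an element $V$ of a $C^*$-algebra is a partial isometry precisely when $V^*V$ is a projection. So the statement reduces to three checks: that $V_x^*V_x$ is a projection, that $V_xP_x=V_x$, and that $P_xV_x=V_x$.

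For the first, I would read $V_x^*V_x$ off from~\eqref{v*v}. It is a diagonal matrix (all surviving matrix units have the form $\mathsf{E}_{(j,B),(j,B)}$), so it suffices to see that each diagonal entry is a projection and that distinct summands occupy distinct positions. The entries indexed by $K_x^+$ are single projections $p_C$, so these are immediate. The entries indexed by $H_x^-$ accumulate, for fixed $(A,i,B)$, into $\sum_{\af\in\Delta_A}s_{\af,\theta_\af(B)}s_{\af,\theta_\af(B)}^*$, since the position $\io_B^-(A,i)$ does not involve $\af$; the key point is that these are \emph{mutually orthogonal} projections, because relation (iii) of Definition~\ref{def:representation of RGBDS} gives $s_{\af,\theta_\af(B)}^*s_{\af',\theta_{\af'}(B)}=\dt_{\af,\af'}p_{\theta_\af(B)\cap\theta_{\af'}(B)}$ and hence $\bigl(s_{\af,\theta_\af(B)}s_{\af,\theta_\af(B)}^*\bigr)\bigl(s_{\af',\theta_{\af'}(B)}s_{\af',\theta_{\af'}(B)}^*\bigr)=0$ for $\af\ne\af'$. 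A finite sum of mutually orthogonal projections is again a projection. Finally, because $\io_B^-$ and $\io_C^-$ are bijections and the splitting of each $L_B^-$ into its ``triple'' and ``pair'' parts is disjoint, the positions indexed by $K_x^+$ and by $H_x^-$ are pairwise distinct, so a diagonal matrix with projection entries results; thus $V_x^*V_x$ is a projection and $V_x$ is a partial isometry.

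For the absorption identities I would exploit that both forms of $P_x$ are diagonal and supported exactly on the row/column indices occurring in $V_x$. To obtain $V_xP_x=V_x$, use the form~\eqref{p}, whose diagonal entries sit at the positions $\io_C^-(A,\af,i)$ and $\io_B^-(A,i)$ — precisely the column indices appearing in $V_x$. Multiplying matrix units and using that the $\io_B^\pm,\io_C^\pm$ are bijections, each column of $V_x$ meets exactly one diagonal entry of $P_x$ (the cross terms vanish by the block/type disjointness above), and the scalar parts combine as $p_Bs_{\af,\theta_\af(B)}p_C\cdot p_C=p_Bs_{\af,\theta_\af(B)}p_C$ on the $I_x^+$-terms and $p_Cs_{\af,\theta_\af(B)}^*p_B\cdot p_B=p_Cs_{\af,\theta_\af(B)}^*p_B$ on the $I_x^-$-terms, leaving $V_x$ unchanged. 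Symmetrically, for $P_xV_x=V_x$ use the defining form from Definition~\ref{PV}, whose diagonal entries sit at the row indices $\io_B^+(A,i)$ and $\io_C^+(A,\af,i)$ of $V_x$, and absorb the leading $p_B$ (resp.\ $p_C$) via $p_B^2=p_B$ (resp.\ $p_C^2=p_C$).

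I expect the only real difficulty to be bookkeeping rather than anything conceptual: one must verify carefully that the index sets $I_x^\pm$, $J_x^\pm$, $K_x^\pm$, $H_x^\pm$ and the bijections $\io_B^\pm$ are compatible, so that every column (resp.\ row) index of $V_x$ lands on an actually present diagonal entry of the relevant form of $P_x$, and that the membership conditions ($B\subseteq A$, $C\subseteq\theta_\af(A)$, $1\le i\le k_A$ or $1\le i\le -k_A$) agree on both sides. Once the positions are aligned, the algebra needed is only $p_B^2=p_B$, $p_C^2=p_C$, and the orthogonality relation (iii); no further structure of the representation enters.
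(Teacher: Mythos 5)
Your proof is correct, but it takes a genuinely different route from the paper's. For the partial-isometry claim, the paper works from Equation~\eqref{vv*}: it multiplies the explicit expression for $V_xV_x^*$ against $V_x$ and verifies $(V_xV_x^*)V_x=V_x$ by a long term-by-term computation with the Toeplitz relations. You instead start from Equation~\eqref{v*v} and invoke the characterization that $V$ is a partial isometry if and only if $V^*V$ is a projection; the work then reduces to observing that $V_x^*V_x$ is diagonal, that its entries are either single projections $p_C$ or, after accumulating over $\af\in\Delta_A$ at a fixed slot $\io_B^-(A,i)$, finite sums $\sum_{\af\in\Delta_A}s_{\af,\theta_\af(B)}s_{\af,\theta_\af(B)}^*$ of mutually orthogonal projections (orthogonality coming from relation (iii) of Definition~\ref{def:representation of RGBDS}), and that distinct summands occupy distinct diagonal positions (injectivity of $\io_C^-$ together with the disjointness of the ``triple'' and ``pair'' parts of each $L_C^-$). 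This is less computational than the paper's verification once Lemma~\ref{pv-prop} is in hand, at the cost of leaning more heavily on the index bookkeeping; both arguments use only relations (i)--(iii), as they must, since no Cuntz--Krieger relation is available for a Toeplitz representation. For the absorption identities $P_xV_x=V_x=V_xP_x$, the paper simply declares them straightforward, and your matching of the diagonal entries of the two forms of $P_x$ (Definition~\ref{PV} for the rows, Equation~\eqref{p} for the columns) against the row and column indices of $V_x$, with cross terms killed by the same type-disjointness argument, is precisely the verification being elided, and it is sound.
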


\begin{proof}By Equation (\ref{vv*}), we have 
\begin{align*} &(V_xV_x^*)V_x \\
&= \Big(\sum_{ H_x^+}p_Bs_{\alpha, \theta_\af(B)}s_{\alpha, \theta_\af(B)}^* p_B\mathsf{E}_{\iota_B^+(A,i),\iota_B^+(A,i)}+\sum_{J_x^-}p_C \mathsf{E}_{\iota_C^+(A,\alpha,i),\iota_C^+(A,\alpha,i)} \Big) \\
 &  \times \Big( \sum_{ I_x^+}p_{B'}s_{\alpha', \theta_{\af'}(B')} p_{C'} \mathsf{E}_{\iota_{B'}^+(A',j),\iota_{C'}^-(A',\alpha',j)} + \sum_{ I_x^-}p_{C'}s_{\alpha', \theta_{\af'}(B')}^* p_{B'}  \mathsf{E}_{\iota_{C'}^+(A',\alpha',j),\iota_{B'}^-(A',j)}\Big)\\
 &= \sum_{(A,i,\alpha,B,C'),(A,i,\af',B,C')\in I_x^+}p_Bs_{\alpha, \theta_\af(B)}s_{\alpha, \theta_\af(B)}^* p_B s_{\alpha', \theta_{\af'}(B)} p_{C'} \mathsf{E}_{\iota_B^+(A,i),\iota_{C'}^-(A,\alpha',i)} \\
 & \hskip9pc+ \sum_{ (A,i,\alpha,B',C)\in I_x^-}p_Cs_{\alpha, \theta_\af(B')}^* p_{B'}  \mathsf{E}_{\iota_C^+(A,\alpha,i),\iota_{B'}^-(A,i)} \\
&= \sum_{(A,i,\alpha,B,C'),(A,i,\af',B,C')\in I_x^+} p_Bs_{\alpha, \theta_\af(B)}s_{\alpha, \theta_\af(B)}^*  s_{\alpha', \theta_{\af'}(B)} p_{C'} \mathsf{E}_{\iota_B^+(A,i),\iota_{C'}^-(A,\alpha',i)} \\
 & \hskip 9pc+  \sum_{ (A,i,\alpha,B,C)\in I_x^-}p_Cs_{\alpha, \theta_\af(B)}^* p_{B}  \mathsf{E}_{\iota_C^+(A,\alpha,i),\iota_{B}^-(A,i)}\\
&= \sum_{(A,i,\alpha,B,C'),(A,i,\af,B,C')\in I_x^+} p_Bs_{\alpha, \theta_\af(B)}p_{\theta_\alpha(B)} p_{C'} \mathsf{E}_{\iota_B^+(A,i),\iota_{C'}^-(A,\alpha,i)} \\
 & \hskip 9pc+  \sum_{ (A,i,\alpha,B,C)\in I_x^-}p_Cs_{\alpha, \theta_\af(B)}^* p_{B}  \mathsf{E}_{\iota_C^+(A,\alpha,i),\iota_{B}^-(A,i)} \\
&= \sum_{(A,i,\alpha,B,C)\in I_x^+} p_Bs_{\alpha, \theta_\af(B)} p_{C} \mathsf{E}_{\iota_B^+(A,i),\iota_{C}^-(A,\alpha,i)} \\
 & \hskip 9pc+   \sum_{ (A,i,\alpha,B,C)\in I_x^-}p_Cs_{\alpha, \theta_\af(B)}^* p_{B}  \mathsf{E}_{\iota_C^+(A,\alpha,i),\iota_{B}^-(A,i)}\\
  &=V_x. 
\end{align*} This means that $V_x$ is a partial isomerty. It is straightforward to check that  $P_xV_x = V_x$ and $V_xP_x=V_x$.
\end{proof}

\begin{lem}\label{unitary}  Let $\{p_A, s_{\af,B}: A\in\mathcal{B}, ~\af \in \CL ~\text{and}~ B \in \CI_\af \}$ be the universal  $(\CB, \CL, \theta, \CI_\af;\CJ)$-representation. Then we have 
$V_xV_x^*=V_x^*V_x=P_x$, and hence $U_x:=V_x+(1-P_x)$ is  unitary.
\end{lem}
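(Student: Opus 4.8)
The plan is to exploit the one feature that distinguishes the universal $(\CB,\CL,\theta,\CI_\af;\CJ)$-representation from an arbitrary Toeplitz representation, namely relation (iv) of Definition \ref{def:representation of RGBDS}. Since the universal representation is in particular a Toeplitz representation, Equations (\ref{vv*}) and (\ref{v*v}) of Lemma \ref{pv-prop} still hold, so I would start from
\[
V_xV_x^* = \sum_{H_x^+} s_{\af,\theta_\af(B)}s_{\af,\theta_\af(B)}^*\,\mathsf{E}_{\iota_B^+(A,i),\iota_B^+(A,i)} + \sum_{J_x^-} p_C\,\mathsf{E}_{\iota_C^+(A,\af,i),\iota_C^+(A,\af,i)}
\]
and compare it with the defining expression for $P_x$ in Definition \ref{PV}. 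The two $J_x^-$-summands are literally the same, so the whole problem collapses to matching the $H_x^+$-sum against $\sum_{J_x^+} p_B\,\mathsf{E}_{\iota_B^+(A,i),\iota_B^+(A,i)}$.

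The heart of the matter is the identity
\[
\sum_{\af \in \Delta_A} s_{\af,\theta_\af(B)} s_{\af,\theta_\af(B)}^* = p_B \qquad (B \in \CF,\ B \subseteq A).
\]
Here I would use that $B \in \CF \subseteq \CJ$, so relation (iv) gives $p_B = \sum_{\af \in \Delta_B} s_{\af,\theta_\af(B)} s_{\af,\theta_\af(B)}^*$; the extra indices $\af \in \Delta_A \setminus \Delta_B$ contribute nothing, because for such $\af$ one has $\theta_\af(B) = \emptyset$ (as $B \subseteq A$ forces $\Delta_B \subseteq \Delta_A$ via $\theta_\af(B) \subseteq \theta_\af(A)$), and $s_{\af,\emptyset} = 0$ by relation (iii). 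Fixing $(A,i,B) \in J_x^+$ and summing the $H_x^+$-terms over $\af \in \Delta_A$ therefore returns exactly $p_B\,\mathsf{E}_{\iota_B^+(A,i),\iota_B^+(A,i)}$, which yields $V_xV_x^* = P_x$.

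For $V_x^*V_x$ I would run the identical argument, but now compare Equation (\ref{v*v}) with the \emph{alternative} form (\ref{p}) of $P_x$ (which is tailored to the $\iota^-$-labelling): the $K_x^+$-summands coincide on the nose, while the $H_x^-$-sum collapses onto $\sum_{K_x^-} p_B\,\mathsf{E}_{\iota_B^-(A,i),\iota_B^-(A,i)}$ by the very same application of relation (iv). This gives $V_x^*V_x = P_x$, so $V_x$ is a partial isometry whose source and range projections both equal $P_x$.

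Finally, the unitarity of $U_x = V_x + (1-P_x)$ is a formal consequence. From Lemma \ref{partial iso} we have $P_xV_x = V_x = V_xP_x$, and hence $P_xV_x^* = V_x^* = V_x^*P_x$ after taking adjoints, so the cross terms $V_x(1-P_x)$, $(1-P_x)V_x^*$, $V_x^*(1-P_x)$, $(1-P_x)V_x$ all vanish; since $1-P_x$ is a projection, one gets $U_xU_x^* = V_xV_x^* + (1-P_x) = P_x + (1-P_x) = 1$ and likewise $U_x^*U_x = 1$. The only genuine obstacle in the whole argument is the index bookkeeping in the displayed collapse identity — verifying $\Delta_B \subseteq \Delta_A$ and the vanishing of the $s_{\af,\emptyset}$ terms — which is precisely the point at which relation (iv), and thus the passage from a Toeplitz representation to the universal representation, enters.
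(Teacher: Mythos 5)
Your proposal is correct and follows essentially the same route as the paper: both start from Equations (\ref{vv*}) and (\ref{v*v}), regroup the $H_x^+$-sum over $J_x^+$ (resp.\ the $H_x^-$-sum over $K_x^-$, compared against the alternative form (\ref{p}) of $P_x$), and collapse the inner sums via relation (iv) applied to $B\in\CF\subseteq\CJ$. The only difference is that you spell out details the paper leaves implicit --- the $\Delta_B\subseteq\Delta_A$ bookkeeping, the vanishing of $s_{\af,\emptyset}$, and the algebra showing $U_x$ is unitary --- so there is nothing to correct.
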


\begin{proof} It follows from Equation (\ref{vv*}) and Definition \ref{def:representation of RGBDS}(iv) that
 \begin{align*} V_xV_x^*
&=\sum_{H_x^+}s_{\alpha, \theta_\alpha(B)} s_{\alpha, \theta_\alpha(B)}^* \mathsf{E}_{\iota_B^+(A,i),\iota_B^+(A,i)}+\sum_{ J_x^-}p_C \mathsf{E}_{\iota_C^+(A,\alpha,i),\iota_C^+(A,\alpha,i)} \\
&=\sum_{ J_x^+} \Big( \sum_{\af \in \Delta_B} s_{\alpha, \theta_\alpha(B)} s_{\alpha, \theta_\alpha(B)}^* \Big) \mathsf{E}_{\iota_B^+(A,i),\iota_B^+(A,i)}+\sum_{ J_x^-}p_C \mathsf{E}_{\iota_C^+(A,\alpha,i),\iota_C^+(A,\alpha,i)} \\
&=\sum_{ J_x^+} p_B \mathsf{E}_{\iota_B^+(A,i),\iota_B^+(A,i)} +\sum_{ J_x^-}  p_C  \mathsf{E}_{\iota_C^+(A,\alpha,i),\iota_C^+(A,\alpha,i)} \\
&=P_x.
\end{align*}
Likewise, Equation (\ref{v*v}), Equation (\ref{p}) and  Definition \ref{def:representation of RGBDS}(iv) shows that 
\begin{align*}  V_x^*V_x 
&=\sum_{K_x^+}p_C \mathsf{E}_{\iota_C^-(A,\alpha,i),\iota_C^-(A,\alpha,i)}+\sum_{ H_x^-}s_{\alpha, \theta_\alpha(B)} s_{\alpha, \theta_\alpha(B)} ^* \mathsf{E}_{\iota_B^-(A,i),\iota_B^-(A,i)}, \\
&=\sum_{K_x^+}p_C \mathsf{E}_{\iota_C^-(A,\alpha,i),\iota_C^-(A,\alpha,i)}+\sum_{K_x^-} \Big( \sum_{\af \in \Delta_B}s_{\alpha, \theta_\alpha(B)} s_{\alpha, \theta_\alpha(B)}^* \Big) \mathsf{E}_{\iota_B^-(A,i),\iota_B^-(A,i)}, \\
&= \sum_{ K_x^+}p_C \mathsf{E}_{\iota_C^-(A,\alpha,i),\iota_C^-(A,\alpha,i)}+\sum_{K_x^-}p_B \mathsf{E}_{\iota_B^-(A,i),\iota_B^-(A,i)}\\
&=P_x.
\end{align*}Now, the fact that $U_x$ is unitary easily follows.
\end{proof}

 \begin{thm}\label{generator of K1} The map $\chi_1: \ker((1-\Phi)|_\CJ)\rightarrow K_1(C^*(\CB,\CL,\theta,\CI_\af; \CJ))$ given by 
$$\chi_1(x)=[U_x]_1$$
is a group isomorphism.
\end{thm}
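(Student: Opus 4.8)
The plan is to exploit the six-term exact sequence of Proposition~\ref{K-theory of RGBDS} to identify $\chi_1$, up to sign, with the inverse of the index map; this settles well-definedness, the homomorphism property, and bijectivity in one stroke. Since $K_1(\CA)=K_1(K_\CJ)=0$ by Lemma~\ref{K1-zero}, the sequence collapses to
\[
0 \longrightarrow K_1(C^*(\CB,\CL,\theta,\CI_\af;\CJ)) \xrightarrow{\ \partial\ } K_0(K_\CJ) \xrightarrow{\ (1-\Phi)|_\CJ\ } K_0(\CA),
\]
so the index map $\partial$ is an isomorphism of $K_1(C^*(\CB,\CL,\theta,\CI_\af;\CJ))$ onto $\ker((1-\Phi)|_\CJ)$. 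It therefore suffices to prove that for every $x \in \ker((1-\Phi)|_\CJ)$ and every admissible choice of $F$, $\mathcal{F}$, and bijections $\iota_B^\pm$, one has $\partial([U_x]_1) = -x$. Indeed, this forces $[U_x]_1 = \partial^{-1}(-x)$, which is then independent of all choices and exhibits $\chi_1 = -\partial^{-1}$ as a group isomorphism.

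To compute $\partial([U_x]_1)$, I would realize $C^*(\CB,\CL,\theta,\CI_\af;\CJ)\cong\CO_{(K_\CJ,X)}$ as a quotient of the Toeplitz algebra $\mathcal{T}(\CB,\CL,\theta,\CI_\af)$, the kernel being the ideal $\mathcal{I}$ generated by the gap projections $q_B := p_B - \sum_{\af\in\Delta_B} s_{\af,\theta_\af(B)} s_{\af,\theta_\af(B)}^*$ for $B\in\CJ$; the sequence of Proposition~\ref{K-theory of RGBDS} is the one attached to this extension, under the identification $K_0(\mathcal{I})\cong K_0(K_\CJ)=\operatorname{span}_\Z\{\chi_A:A\in\CJ\}$ sending $[q_B]_0\mapsto\chi_B$. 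By Lemma~\ref{partial iso}, the element $\tilde U_x := V_x+(1-P_x)$ computed in the universal Toeplitz representation is a partial isometry lifting $U_x$ (one checks $\tilde U_x^*\tilde U_x=V_x^*V_x+(1-P_x)$ and $\tilde U_x\tilde U_x^*\tilde U_x=\tilde U_x$ using $P_xV_x=V_x=V_xP_x$), so the standard index-map formula gives
\[
\partial([U_x]_1) = [\,1 - \tilde U_x^* \tilde U_x\,]_0 - [\,1 - \tilde U_x \tilde U_x^*\,]_0 .
\]
Here $1-\tilde U_x^*\tilde U_x=P_x-V_x^*V_x$ and $1-\tilde U_x\tilde U_x^*=P_x-V_xV_x^*$, and Lemma~\ref{pv-prop} (Equations (\ref{p}), (\ref{vv*}), (\ref{v*v})) collapses these, after cancellation of the matching summands, to the diagonal projections
\[
1-\tilde U_x^*\tilde U_x = \sum_{(A,i,B)\in K_x^-} q_B\,\mathsf{E}_{\iota_B^-(A,i),\iota_B^-(A,i)}, \qquad 1-\tilde U_x\tilde U_x^* = \sum_{(A,i,B)\in J_x^+} q_B\,\mathsf{E}_{\iota_B^+(A,i),\iota_B^+(A,i)},
\]
the key simplification being that for $B\subseteq A$ one has $\sum_{\af\in\Delta_A} s_{\af,\theta_\af(B)} s_{\af,\theta_\af(B)}^* = \sum_{\af\in\Delta_B} s_{\af,\theta_\af(B)} s_{\af,\theta_\af(B)}^*$, since $\theta_\af(B)=\emptyset$ off $\Delta_B$.

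Passing to $K_0(\mathcal{I})\cong K_0(K_\CJ)$ and using $[q_B]_0=\chi_B$, the class $\partial([U_x]_1)$ becomes the coefficient count $\sum_{B\in\mathcal{F}}(m_B^--m_B^+)\chi_B$, where $m_B^{\pm}=\sum_{A\in F,\,B\subseteq A}\max(\pm k_A,0)$; hence the coefficient of $\chi_B$ is $m_B^--m_B^+=-\sum_{A\in F,\,B\subseteq A}k_A$. Since every $A\in F$ is a disjoint union of members of $\mathcal{F}$, we have $\chi_A=\sum_{B\in\mathcal{F},\,B\subseteq A}\chi_B$, so $x=\sum_{B\in\mathcal{F}}\big(\sum_{A\in F,\,B\subseteq A}k_A\big)\chi_B$, giving $\partial([U_x]_1)=-x$, as required. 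I expect the main obstacle to be the identification of the abstract index map $\partial$ of Proposition~\ref{K-theory of RGBDS} with the Toeplitz partial-isometry formula above, together with the compatibility $[q_B]_0=\chi_B$ under $K_0(\mathcal{I})\cong K_0(K_\CJ)$: this requires unwinding Katsura's construction of the six-term sequence in \cite{Ka2004c} (Remark B.4) and matching its sign and generator conventions, after which the collapse via Lemma~\ref{pv-prop} and the counting are routine.
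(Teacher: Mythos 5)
Your proposal is correct and follows essentially the same route as the paper: both pass to the Toeplitz extension $0\to\ker\pi\to\CT(\CB,\CL,\theta,\CI_\af)\to C^*(\CB,\CL,\theta,\CI_\af;\CJ)\to 0$, invoke Katsura's Section 8 to identify its six-term sequence with the one in Proposition \ref{K-theory of RGBDS} (with $[p_B-\sum_{\af\in\Delta_B}s_{\af,\theta_\af(B)}s_{\af,\theta_\af(B)}^*]_0\mapsto\chi_B$), lift $U_x$ to the Toeplitz partial isometry $\widetilde{U_x}$, collapse the defect projections via Lemma \ref{pv-prop}, and conclude by the same coefficient count that the (co)boundary map is a two-sided inverse of $\chi_1$ up to sign. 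The only difference is cosmetic: you phrase the conclusion as $\chi_1=-\partial^{-1}$ for the standard index-map convention, while the paper exhibits $\kappa\circ\delta_1$ as the explicit inverse; either sign convention yields the isomorphism.
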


\begin{proof}
Let $\CT:=\CT(\CB,\CL,\theta, \CI_\af)$, $\mathfrak{A}:=C^*(\CB,\CL,\theta, \CI_\af;\CJ)$ and  \(\pi: \mathcal{T} \to \mathfrak{A}\) denote the canonical quotient map, so that

$$\xymatrix{
0 \ar[r] & \ker\pi  \ar[r]^{i}  & \CT \ar[r]^{\pi} &  \mathfrak{A} \ar[r] & 0\\ 
}$$
is exact. Then the associated 6-term exact sequence is  
$$\xymatrix{
K_0(\ker \pi) \ar[r]^{i_*} & K_0 (\CT) \ar[r]^{\pi_*} & K_0( \mathfrak{A}) \ar[d] & \\ 
 K_1( \mathfrak{A}) \ar[u]^{\delta_1}& K_1 (\CT) \ar[l]^{\pi_*} & K_1(\ker \pi) \ar[l]^{i_*}.&  
}$$

Consider  the $C^*$-correspondences  $X (=X(\CB, \CL,\theta, \CI_\alpha))$ over $\CA (=\CA(\CB, \CL,\theta))$.
 Then $\CT_{X}$ is isomorphic to $\CT$ and $\CO_{(K_\CJ,X)}$ is isomorphic to $\mathfrak{A}$. It therefore follows from the results of \cite[Section 8]{Ka2004c} that $\ker\pi$ and $\CT$ are $\it{KK}$-equivalent to  $ K_\CJ$ and $\CA$, respectively, and that there are group isomorphisms $\kappa: K_0(\ker \pi) \rightarrow \operatorname{span}_{\mathbb{Z}}\{\chi_A: A \in \CJ\}$ and $\ld: K_0(\CT) \rightarrow \operatorname{span}_{\mathbb{Z}}\{\chi_A: A \in \CB\} $ such that the diagram

$$\xymatrix  @C=18pt {0 \ar[r] & K_1( \mathfrak{A}) \ar[r]^{\delta_1} &  K_0(\ker \pi) \ar[d]^{\kappa} \ar[r]^{i_*}& K_0 (\CT) \ar[d]^{\ld} \ar[r]^{\pi_*} & K_0( \mathfrak{A})   \ar[r] &  0\\
& & \operatorname{span}_{\mathbb{Z}}\{\chi_A: A \in \CJ\} \ar[r]_{(1-\Phi)|_\CJ} &   \operatorname{span}_{\mathbb{Z}}\{\chi_A: A \in \CB\}  & &
}$$
commutes with the top row exact, and such that $\ld([p_A]_0)=\chi_A$ for $A\in \CB$, and 
\begin{align}\label{kappa map}
\kappa\Big(\Big[ p_B-\sum_{\af \in \Delta_B} s_{\af, \theta_\af(B)} s_{\af, \theta_\af(B)} ^* \Big]_0 \Big) = \chi_B
\end{align}
for $B \in \CJ$.

To prove that $\chi_1$ is a group isomorphism, we fix $x \in \ker((1-\Phi)|_\CJ)$ and lift $U_x=V_x+(1-P_x) \in \mathsf{M}_{|N|}(\mathfrak{A})$ to $\widetilde{U_x}=\widetilde{V_x}+(1-\widetilde{P_x}) \in \mathsf{M}_{|N|}(\CT)$ where $\widetilde{V_x}$ and $\widetilde{P_x}$ are the elements  in $\mathsf{M}_{|N|}(\CT)$ we get in Definition \ref{PV} by using the universal Toeplitz  representation $\{p_A,s_{\af,B}: A \in \CB, \af \in \CL, B \in \CI_\af\}$. By Lemma \ref{partial iso}, $\widetilde{V_x}$ is partial isometry with $\widetilde{P_x}\widetilde{V_x}=\widetilde{V_x}\widetilde{P_x}=\widetilde{V_x}$. It follows that $\widetilde{U_x}$ is also a partial isometry. We then have by Lemma \ref{pv-prop} that 
\begin{align*} 1-\widetilde{U_x}\widetilde{U_x^*}&=\widetilde{P_x}-\widetilde{V_x}\widetilde{V_x^*} \\
&= \sum_{(A,i,B)\in J_x^+}p_B  \mathsf{E}_{\iota_B^+(A,i),\iota_B^+(A,i)}-\sum_{(A,i,\af,B) \in H_x^+}s_{\alpha, \theta_\alpha(B)}s_{\alpha, \theta_\alpha(B)}^* \mathsf{E}_{\iota_B^+(A,i),\iota_B^+(A,i)} \\
&= \sum_{(A,i,B)\in J_x^+} \Big( p_B-\sum_{\af \in \Delta_B}s_{\alpha, \theta_\alpha(B)}s_{\alpha, \theta_\alpha(B)}^* \Big)\mathsf{E}_{\iota_B^+(A,i),\iota_B^+(A,i)}
\end{align*}
and that 
\begin{align*} 1-\widetilde{U_x^*}\widetilde{U_x}&=\widetilde{P_x}-\widetilde{V_x^*}\widetilde{V_x} \\
&=\sum_{(A,i,B) \in K_x^-}p_B \mathsf{E}_{\iota_B^-(A,i),\iota_B^-(A,i)}-\sum_{(A,i,\af,B) \in H_x^-} s_{\alpha, \theta_\alpha(B)}s_{\alpha, \theta_\alpha(B)}^*\mathsf{E}_{\iota_B^-(A,i),\iota_B^-(A,i)}\\
&=   \sum_{(A,i,B)\in K_x^-} \Big( p_B-\sum_{\af \in \Delta_B}s_{\alpha, \theta_\alpha(B)}s_{\alpha, \theta_\alpha(B)}^* \Big)\mathsf{E}_{\iota_B^-(A,i),\iota_B^-(A,i)}.
\end{align*}
Thus, in $K_0(\ker\pi)$ we have that 
\begin{eqnarray}\label{index map}[1-\widetilde{U_x}\widetilde{U_x^*}]_0-[1-\widetilde{U_x^*}\widetilde{U_x}]_0 = \sum_{k_A \neq 0, B \subseteq A }k_A\Big[  p_B-\sum_{\af \in \Delta_B} s_{\alpha, \theta_\alpha(B)}s_{\alpha, \theta_\alpha(B)}^* \Big]_0.
\end{eqnarray}
This together with Equation (\ref{kappa map}) and (\ref{index map}) implies that 
\begin{eqnarray}\label{left inverse}\kappa \circ \delta_1 \circ \chi_1(x)=x 
\end{eqnarray}
for any $x \in \ker((1-\Phi)|_\CJ)$. This shows that $\chi_1$ is injective. 

Now, fix $\bf y$ $ \in K_1(\mathfrak{A})$ and observe that  
$$((1-\Phi)|_\CJ) \circ \kappa \circ \delta_1(\bf y)=\ld \circ \iota_* \circ \delta_1 (\bf y) = 0$$  so that 
$\bf z:=\kappa \circ \delta_1(\bf y)  $ lies in $\ker((1-\Phi)|_\CJ)$. 
We then see from Equation (\ref{left inverse}) that 
$$ \kappa \circ \delta_1 \circ \chi_1 ( \bf z)=\bf z =(\kappa \circ \delta_1)(\bf y),$$
and hence $\chi_1 (\bf z)= \bf y$ since  $\kappa \circ \delta_1$ is injective. 
Thus we conclude that  $\kappa \circ \delta_1$ is an inverse of $\chi_1$, which shows that $\chi_1$ is a group isomorphism. 
\end{proof}

\section{Representation of Gauge-Invariant Ideals in Relative Generalized Boolean Dynamical Systems}\label{Morita}

Let $(\CB,\CL,\theta,\CI_\alpha;\CJ)$ be a relative generalized Boolean dynamical system. We will construct an extension $(\tilde{\CB},\CL,\tilde{\theta},\tilde{\CI}_\alpha;\tilde{\CJ})$ of $(\CB,\CL,\theta,\CI_\alpha;\CJ)$ such that $C^*(\tilde{\CB},\CL,\tilde{\theta},\tilde{\CI}_\alpha;\tilde{\CJ})$ is isomorphic to  $C^*(\CB,\CL,\theta,\CI_\alpha;\CJ)$ and such that there for each gauge-invariant ideal $I$ in $C^*(\CB,\CL,\theta,\CI_\alpha;\CJ)$ is a hereditary $\tilde{\CJ}$-saturated ideal $\tilde{\CH}_I$ of $\tilde{\CB}$ such that $C^*(\tilde{\CH}_I,\CL,\tilde{\theta},\tilde{\CH}_I\cap\tilde{\CI}_\alpha;\tilde{\CH}_I\cap\tilde{\CJ})$ is isomorphic to a full hereditary $C^*$-subalgebra of $I$.

Let $\prod_{\beta\in\CL^*}\CR_\beta$ be the Boolean algebra consisting of families $(A_\beta)_{\beta\in\CL^*}$ where $A_\beta\in\CR_\beta$ for each $\beta\in\CL^*$, and union, intersection and relative complement defined by 
\begin{align*}
(A_\beta)_{\beta\in\CL^*}\cup (B_\beta)_{\beta\in\CL^*}&=(A_\beta\cup B_\beta)_{\beta\in\CL^*},\\
(A_\beta)_{\beta\in\CL^*}\cap (B_\beta)_{\beta\in\CL^*}&=(A_\beta\cap B_\beta)_{\beta\in\CL^*},\\
(A_\beta)_{\beta\in\CL^*}\setminus (B_\beta)_{\beta\in\CL^*}&=(A_\beta\setminus B_\beta)_{\beta\in\CL^*}.
\end{align*}
Define for each $\alpha\in\CL$ a Boolean homomorphism $\overline{\theta}_{\alpha}:\prod_{\beta\in\CL^*}\CR_\beta\to\prod_{\beta\in\CL^*}\CR_\beta$ by 
\begin{equation}
\overline{\theta}_{\alpha}\left((A_\beta)_{\beta\in\CL^*}\right)=(A_{\alpha\beta})_{\beta\in\CL^*}.
\end{equation}


Let $\overline{\CB}$ be the Boolean subalgebra of $\prod_{\beta\in\CL^*}\CR_\beta$ consisting of those $(A_\beta)_{\beta\in\CL^*}\in \prod_{\beta\in\CL^*}\CR_\beta$ for which $A_{\beta\alpha}=\theta_{\alpha}(A_\beta)$ for all but finitely many $\alpha\in\CL$ and finitely many $\beta\in\CL^*$.

Let $\alpha\in\CL$. Since $\overline{\theta}_\alpha(\overline{\CB})\subseteq \overline{\CB}$, the Boolean homomorphism $\overline{\theta}_\alpha$ restricts to a Boolean homomorphism from $\overline{\CB}$ to $\overline{\CB}$. We will also use $\overline{\theta}_\alpha$ to denote the restriction of $\overline{\theta}_\alpha$ to $\overline{\CB}$. 

Let $\overline{\CI}$ be the subset of $\overline{\CB}$ consisting of those elements $(A_\beta)_{\beta\in\CL^*}\in\overline{\CB}$ for which $A_\beta\in\CJ$ for all $\beta\in\CL^*$, and $A_\beta=\emptyset$ for all but finitely many $\beta\in\CL^*$. 
Then $\overline{\CI}$ is an ideal of $\overline{\CB}$. Let $\tilde{\CB}$ be the quotient Boolean algebra $\overline{\CB}/\overline{\CI}$ and let $q$ be the quotient map from $\overline{\CB}$ to $\tilde{\CB}$. 
Since $\overline{\theta}_\alpha(\overline{\CI})\subseteq\overline{\CI}$ for each $\alpha\in\CL$, $\overline{\theta}$ descents to an action of $\CL$ on $\tilde{\CB}$. We denote this action by $\tilde{\theta}$. Then $(\tilde{\CB},\CL,\tilde{\theta})$ is a Boolean dynamical system, and 
\begin{equation}\tilde{\CR}_\alpha:=\CR^{(\tilde{\CB},\CL,\tilde{\theta})}_\alpha=q(\{(A_\beta)_{\beta\in\CL^*}\in\overline{\CB}:A_\beta\in\CR_{\alpha\beta}\text{ for each }\beta\in\CL^*\})
\end{equation} for $\alpha\in\CL$.

For $\alpha\in\CL$, we let $\overline{\CI}_\alpha:=\{(A_\beta)_{\beta\in\CL^*}\in\overline{\CB}:A_\beta\in\CI_{\alpha\beta}\text{ for }\beta\in\CL^*\}$ and $\tilde{\CI}_\alpha:=q(\overline{\CI}_\alpha)$. Moreover, we let $\overline{\CJ}:=\{(A_\beta)_{\beta\in\CL^*}\in\overline{\CB}:A_\emptyset\in\CJ\}$ and $\tilde{\CJ}=q(\overline{\CJ})$. Then it is clear that $(\tilde{\CB},\CL,\tilde{\theta},\tilde{\CI}_\alpha;\tilde{\CJ})$ is a relative generalized Boolean dynamical system.

For $\gamma\in\CL^*$, define $\iota_\gamma:\CR_\gamma\to\tilde{\CB}$ by $\iota_\gamma(A):=q((B_\beta)_{\beta\in\CL^*})\in\tilde{\CB}$ where 
$$B_\bt=
\left\{
   \begin{array}{ll}
      \theta_{\beta'}(A) & \hbox{if\ }  \beta=\gamma\beta' ~\text{for some}~ \beta'\in\CL^*,\\
     \emptyset  & \hbox{if\ }  \beta ~\text{ does not begin with}~ \gamma.
       \end{array}
\right.
$$  
 Then $\iota_\gamma$ is a Boolean homomorphism. We let $\CR_\emptyset:=\CB$ and $\iota:=\iota_\emptyset$. Then $\iota\circ\theta_\alpha=\tilde{\theta}_\alpha\circ\iota$ for all $\alpha\in\CL$. In fact, if $A\in\CB$, then $\tilde{\theta}_\alpha(\iota(A))=q((\theta_{\alpha\beta}(A))_{\beta\in\CL^*})$ and $\iota(\theta_\alpha(A))=q(\theta_\beta(\theta_\alpha(A))_{\beta\in\CL^*})=q((\theta_{\alpha\beta}(A))_{\beta\in\CL^*})$.

\begin{thm}\label{prop:one}
Let $(\CB,\CL,\theta,\CI_\alpha;\CJ)$ be a relative generalized Boolean dynamical system and let $(\tilde{\CB},\CL,\tilde{\theta},\tilde{\CI}_\alpha;\tilde{\CJ})$ and $\iota$ be as above. Then $\iota$ is injective, and there is $*$-isomorphism from $\phi:C^*(\CB,\CL,\theta,\CI_\alpha;\CJ)\to C^*(\tilde{\CB},\CL,\tilde{\theta},\tilde{\CI}_\alpha;\tilde{\CJ})$ such that
\begin{align*}  \phi(p_A)&=p_{\iota(A)}  ~\text{for}~  A\in\CB , \\
\phi(s_{\alpha,B})&=s_{\alpha,\iota(B)} ~\text{for}~ \alpha\in\CL ~\text{and}~ B\in\CI_\alpha, \\
\phi(s_{\gamma,C}s_{\gamma,C}^*)&=p_{\iota_\gamma(C)} ~\text{for}~ \gamma\in\CL^* ~\text{and}~ C\in\CR_\gamma. 
\end{align*}
\end{thm}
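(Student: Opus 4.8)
The plan is to prove injectivity of $\iota$ directly, build $\phi$ from the universal property of $C^*(\CB,\CL,\theta,\CI_\alpha;\CJ)$, verify the three displayed formulas, and then show $\phi$ is bijective.

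\emph{Injectivity of $\iota$.} Since $\iota$ is a Boolean homomorphism it suffices to show $\iota(A)=\emptyset$ forces $A=\emptyset$. As $\iota(A)=q((\theta_\beta(A))_{\beta\in\CL^*})$, the equality $\iota(A)=\emptyset$ means $(\theta_\beta(A))_\beta\in\overline{\CI}$, i.e.\ $\theta_\beta(A)\in\CJ\subseteq\CB_{\reg}$ for every $\beta$ and $\theta_\beta(A)=\emptyset$ for all but finitely many $\beta$. If $A\neq\emptyset$, then $A=\theta_\emptyset(A)\in\CJ$ is regular, so I may choose $\alpha_1\in\Delta_A$; then $\theta_{\alpha_1}(A)\in\CJ$ is regular and nonempty, so I choose $\alpha_2\in\Delta_{\theta_{\alpha_1}(A)}$, and continue. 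This produces words $\alpha_1\cdots\alpha_k$ of every length $k$ with $\theta_{\alpha_1\cdots\alpha_k}(A)\neq\emptyset$, contradicting that only finitely many $\theta_\beta(A)$ are nonempty. Hence $A=\emptyset$.

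\emph{Construction of $\phi$.} I would verify that $\{p_{\iota(A)}\}\cup\{s_{\alpha,\iota(B)}\}$ is a $(\CB,\CL,\theta,\CI_\alpha;\CJ)$-representation inside $C^*(\tilde{\CB},\CL,\tilde\theta,\tilde\CI_\alpha;\tilde\CJ)$. Relations (i) and (iii) of Definition~\ref{def:representation of RGBDS} hold because $\iota$ is a Boolean homomorphism (and $\iota(B)\in\tilde\CI_\alpha$ for $B\in\CI_\alpha$, so the partial isometries are defined); (ii) holds because $\iota\circ\theta_\alpha=\tilde\theta_\alpha\circ\iota$; and (iv) holds because $\iota(A)\in\tilde\CJ$ for $A\in\CJ$ (its $\emptyset$-coordinate is $A\in\CJ$), together with $\Delta_{\iota(A)}=\Delta_A$ (using injectivity of $\iota$) and $\tilde\theta_\alpha(\iota(A))=\iota(\theta_\alpha(A))$. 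The universal property then yields $\phi$ with $\phi(p_A)=p_{\iota(A)}$ and $\phi(s_{\alpha,B})=s_{\alpha,\iota(B)}$, and the recursive description of $s_{\mu,\cdot}$ gives $\phi(s_{\mu,D})=s_{\mu,\iota(D)}$ for all $\mu\in\CL^*$ and $D\in\CI_\mu$.

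\emph{The third formula and surjectivity.} Via $\phi(s_{\gamma,C})=s_{\gamma,\iota(C)}$, the identity $\phi(s_{\gamma,C}s_{\gamma,C}^*)=p_{\iota_\gamma(C)}$ reduces to $s_{\gamma,\iota(C)}s_{\gamma,\iota(C)}^*=p_{\iota_\gamma(C)}$, which I would prove by induction on $|\gamma|$. The case $\gamma=\emptyset$ is immediate. For $\gamma=\alpha\gamma'$ I factor $s_{\gamma,\iota(C)}=s_{\alpha,\iota_{\gamma'}(C)}\,s_{\gamma',\iota(C)}$, a legitimate instance of the recursion since $\iota_{\gamma'}(C)\in\tilde\CI_\alpha$ and $\tilde\theta_{\gamma'}(\iota_{\gamma'}(C))=\iota(C)$. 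Applying the inductive hypothesis $s_{\gamma',\iota(C)}s_{\gamma',\iota(C)}^*=p_{\iota_{\gamma'}(C)}$, then moving this projection across $s_{\alpha,\iota_{\gamma'}(C)}$ by relation (ii) and $\tilde\theta_\alpha(\iota_\gamma(C))=\iota_{\gamma'}(C)$, gives $s_{\gamma,\iota(C)}s_{\gamma,\iota(C)}^*=p_{\iota_\gamma(C)}\,s_{\alpha,\iota_{\gamma'}(C)}s_{\alpha,\iota_{\gamma'}(C)}^*$. The key point is that the last range projection equals $p_{\iota_\gamma(C)}$ by relation (iv): indeed $\iota_\gamma(C)\in\tilde\CJ$ (its $\emptyset$-coordinate is $\emptyset$), with $\Delta_{\iota_\gamma(C)}=\{\alpha\}$ and $\tilde\theta_\alpha(\iota_\gamma(C))=\iota_{\gamma'}(C)$; so the product collapses to $p_{\iota_\gamma(C)}$. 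In particular $p_{\iota_\gamma(C)}=\phi(s_{\gamma,C}s_{\gamma,C}^*)$ and $s_{\alpha,\iota_\gamma(C)}=\phi(s_{\alpha\gamma,C}s_{\gamma,C}^*)$ lie in $\operatorname{Im}\phi$. Surjectivity then follows from a generation lemma: every element of $\tilde{\CB}$ (resp.\ of $\tilde\CI_\alpha$) is a finite Boolean combination of elements $\iota_\gamma(C)$, so all generators $p_{\tilde A}$ and $s_{\alpha,\tilde B}$ are reached using relation (i) and the compatibility of $B\mapsto s_{\alpha,B}$ with Boolean operations.

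\emph{Injectivity of $\phi$ and the main difficulty.} For injectivity I would invoke the gauge-invariant uniqueness theorem for these algebras: $\phi$ intertwines the two gauge actions, $p_{\iota(A)}\neq 0$ for $A\neq\emptyset$ by injectivity of $\iota$, and for $A\in\CB_{\reg}\setminus\CJ$ the element $\iota(A)$ is not in $\tilde\CJ$ (otherwise, since the coordinates of $\overline{\CI}$ lie in $\CJ$, adjusting the $\emptyset$-coordinate $A$ by an element of $\CJ$ would force $A\in\CJ$), so the relevant covariance defect is nonzero; alternatively one constructs the inverse directly via $p_{\iota_\gamma(C)}\mapsto s_{\gamma,C}s_{\gamma,C}^*$. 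The main obstacle is the generation lemma: it encodes the ``eventually a shift'' structure defining $\overline{\CB}$, and making precise that finitely supported deviations from pure shifts are exactly captured by finite Boolean combinations of the cone-supported elements $\iota_\gamma(C)$ is the combinatorial heart of the argument, with the transport of these combinations to the $C^*$-level being a secondary bookkeeping point.
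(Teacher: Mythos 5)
Your proposal follows essentially the same route as the paper's proof: the same infinite-sequence contradiction for injectivity of $\iota$, the same universal-property construction of $\phi$ together with the gauge-invariant uniqueness theorem for injectivity, and the same key computation $s_{\gamma,\iota(C)}s_{\gamma,\iota(C)}^*=p_{\iota_\gamma(C)}$ (the paper's \eqref{eq:1}) combined with the relation $s_{\alpha,\iota_\gamma(B)}=s_{\alpha,\iota(C)}p_{\iota_\gamma(B)}$ to get surjectivity. The generation fact you single out as the main obstacle --- that $\tilde{\CB}$ is generated by $\bigcup_{\gamma\in\CL^*}\iota_\gamma(\CR_\gamma)$ and $\tilde{\CI}_\alpha$ by $\bigcup_{\gamma\in\CL^*}\iota_\gamma(\CI_{\alpha\gamma})$ --- is invoked in exactly the same way in the paper, which likewise states it without proof, so your treatment matches the paper's level of detail.
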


\begin{proof}
We first prove that $\iota$ is injective. Since $\iota$ is a Boolean homomorphism, it suffices to prove that $\iota(A)\ne\emptyset$ for $A\ne\emptyset$. Assume for contradiction that $A\ne\emptyset$ and $\iota(A)\in\overline{\CI}$. Then $A\in\CJ$. It follows that there is an $\alpha_1\in\CL$ such that $\emptyset\ne\theta_{\alpha_1}(A)\in\CJ$. Continuing like this, we get an infinite sequence $\alpha_1,\alpha_2,\dots$ of elements in $\CL$ such that $\theta_{\alpha_1\alpha_2\dots\alpha_n}(A)\ne\emptyset$ for every $n$. But this is a contradiction to the assumption that $\iota(A)\in\overline{\CI}$. We thus have that $\iota$ is injective.

Next, we construct the $*$-isomorphism $\phi$. For $A\in\CB$, $\alpha\in\CL$, and $B\in\CI_\alpha$, let $$P_A=p_{\iota(A)} ~\text{and}~S_{\alpha,B}=s_{\alpha,\iota(B)}.$$ Using that $\iota:\CB\to\tilde{\CB}$ is an injective Boolean homomorphism such that $\iota(\CJ)\subseteq\tilde{\CJ}$, and $\iota\circ\theta_\alpha=\tilde{\theta}_\alpha\circ\iota$ for all $\alpha\in\CL$, it is easy to check that $\{P_A,\ S_{\alpha,B}:A\in\CB,\ \alpha\in\CL,\ B\in\CI_\alpha\}$ is a $(\CB,\CL,\theta,\CI_\alpha;\CJ)$-representation.
 It thus follows from the universal property of $C^*(\CB,\CL,\theta,\CI_\alpha;\CJ)$ that there is a $*$-homomorphism $\phi:C^*(\CB,\CL,\theta,\CI_\alpha;\CJ)\to C^*(\tilde{\CB},\CL,\tilde{\theta},\tilde{\CI}_\alpha;\tilde{\CJ})$ such that $$\phi(p_A)=p_{\iota(A)} ~\text{and}~ \phi(s_{\alpha,B})=s_{\alpha,\iota(B)}$$
  for $A\in\CB$, $\alpha\in\CL$ and $B\in\CI_\alpha$.

Since $\iota$ is injective and $\iota(A)\in\tilde{\CJ}\iff A\in\CJ$ for $A\in\CB$, it follows from the Gauge-Invariant Uniqueness Theorem that $\phi$ is injective. 

We now show that $\phi$ is surjective. Since $C^*(\tilde{\CB},\CL,\tilde{\theta},\tilde{\CI}_\alpha;\tilde{\CJ})$ is generated by $$\left\{p_{(A_\beta)_{\beta\in\CL^*}},\ s_{\alpha,(B_\beta)_{\beta\in\CL^*}}:(A_\beta)_{\beta\in\CL^*}\in\tilde{\CB},\ \alpha\in\CL,\ (B_\beta)_{\beta\in\CL^*}\in\tilde{\CI}_\alpha\right\},$$ it suffices to show that $p_{(A_\beta)_{\beta\in\CL^*}}$ and $s_{\alpha,(B_\beta)_{\beta\in\CL^*}}$ belong to the image of $\phi$ for $(A_\beta)_{\beta\in\CL^*}\in\tilde{\CB}$, $\alpha\in\CL$, and $(B_\beta)_{\beta\in\CL^*}\in\tilde{\CI}_\alpha$.

Suppose that $\gamma=\gamma_1\gamma_2\cdots\gamma_n\in\CL^*\setminus\{\emptyset\}$ and $A\in\CR_\gamma$. Then 
\begin{align*}
\iota_\gamma(A)&\in\tilde{\CJ},\ \Delta_{\iota_\gamma(A)}=\{\gamma_1\},\ \tilde{\theta}_{\gamma_1}(\iota_\gamma(A))=\iota_{\gamma_2\gamma_3\cdots\gamma_n}(A),\ \iota_{\gamma_2\gamma_3\cdots\gamma_n}(A)\in\tilde{\CJ},\\ 
&\Delta_{\iota_{\gamma_2\gamma_3\cdots\gamma_n}(A)}=\{ \gamma_2\},\ \tilde{\theta}_{\gamma_2}(\iota_{\gamma_2\gamma_3\cdots\gamma_n}(A))=\iota_{\gamma_3\gamma_4\cdots\gamma_n}(A),\ \dots,\ \iota_{\gamma_n}(A)\in\tilde{\CJ},\\ 
&\Delta_{\iota_{\gamma_n}(A)}=\{\gamma_n\}, \text{ and }\tilde{\theta}_{\gamma_n}(\iota_{\gamma_n}(A))=\iota(A).
\end{align*} 
Thus, $n$ applications of the Cuntz--Krieger relations yells 
\begin{equation}\label{eq:1}
\begin{split}
p_{\iota_\gamma(A)}&=s_{\gamma_1,\iota_{\gamma_2\gamma_3\cdots\gamma_n}(A)}s_{\gamma_1,\iota_{\gamma_2\gamma_3\cdots\gamma_n}(A)}^*\\
&=s_{\gamma_1,\iota_{\gamma_2\gamma_3\cdots\gamma_n}(A)}p_{\iota_{\gamma_2\gamma_3\cdots\gamma_n}(A)}s_{\gamma_1,\iota_{\gamma_2\gamma_3\cdots\gamma_n}(A)}^*\\
&=s_{\gamma_1,\iota_{\gamma_2\gamma_3\cdots\gamma_n}(A)}s_{\gamma_2,\iota_{\gamma_3\gamma_4\cdots\gamma_n}(A)}s_{\gamma_2,\iota_{\gamma_3\gamma_4\cdots\gamma_n}(A)}^*s_{\gamma_1,\iota_{\gamma_2\gamma_3\cdots\gamma_n}(A)}^*\\
&\hspace{12em}\vdots\\
&=s_{\gamma_1,\iota_{\gamma_2\gamma_3\cdots\gamma_n}(A)}s_{\gamma_2,\iota_{\gamma_3\gamma_4\cdots\gamma_n}(A)}\cdots s_{\gamma_n,\iota(A)}s_{\gamma_n,\iota(A)}^*\cdots\\
&\hspace{4em} s_{\gamma_2,\iota_{\gamma_3\gamma_4\cdots\gamma_n}(A)}^*s_{\gamma_1,\iota_{\gamma_2\gamma_3\cdots\gamma_n}(A)}^*\\
&=s_{\gamma,\iota(A)}s_{\gamma,\iota(A)}^*.
\end{split}
\end{equation}

Suppose then that $\alpha\in\CL$, $\gamma\in\CL^*$, and $B\in\CI_{\alpha\gamma}$. Then there is a $C\in\CI_\alpha$ such that $B\subseteq\theta_\gamma(C)$. Then $\iota_\gamma(B)\subseteq\iota(C)$ and
\begin{equation}\label{eq:2}
s_{\alpha,\iota_\gamma(B)}
=s_{\alpha,\iota(C)}p_{\iota_\gamma(B)}.
\end{equation}
By using \eqref{eq:1} and \eqref{eq:2}, it is straightforward to use induction to prove that
 \begin{align}\label{induction}p_{\iota_\gamma(A)}~\text{and}~ s_{\alpha,\iota_\gamma(B)} ~\text{ belong to the image of}~\phi 
 \end{align}
  for all $\gamma\in\CL^*$, $A\in\CR_\gamma$, $\alpha\in\CL$, and $B\in\CI_{\alpha\gamma}$;  since $\phi(p_A)=p_{\iota(A)}$ and $\phi(s_{\alpha,B})=s_{\alpha,\iota(B)}$, (\ref{induction}) clearly hols for $\gm=\emptyset$, $A\in \CB$, $\alpha\in\CL$, and $B\in\CI_{\alpha}$. 
Suppose (\ref{induction}) hols for $\gamma\in\CL^n$, $A\in\CR_\gamma$, $\alpha\in\CL$, and $B\in\CI_{\alpha\gamma}$. Then for $\gm \in \CL^{n+1}$ and  $A\in\CR_\gamma$,
\begin{align*}
p_{\iota_\gamma(A)}&=s_{\gamma_1,\iota_{\gamma_2\gamma_3\cdots\gamma_{n+1}}(A)}s_{\gamma_1,\iota_{\gamma_2\gamma_3\cdots\gamma_{n+1}}(A)}^* \in \operatorname{Im}(\phi),
\end{align*}
and hence, \begin{align*}s_{\alpha,\iota_\gamma(B)}
=s_{\alpha,\iota(C)}p_{\iota_\gamma(B)}   \in \operatorname{Im}(\phi)\end{align*}
for any $\alpha\in\CL$, and $B\in\CI_{\alpha\gamma}$. 

 Since $\tilde{\CB}$ is generated by $\bigcup_{\gamma\in\CL^*}\iota_\gamma(\CR_\gamma)$, and $\tilde{\CI}_\alpha$ is, for each $\alpha\in\CL$, generated by $\bigcup_{\gamma\in\CL^*}\iota_\gamma(\CI_{\alpha\gamma})$, we conclude that $p_{(A_\beta)_{\beta\in\CL^*}}$ and $s_{\alpha,(B_\beta)_{\beta\in\CL^*}}$ belong to the image of $\phi$ for all $(A_\beta)_{\beta\in\CL^*}\in\tilde{\CB}$, $\alpha\in\CL$, and $(B_\beta)_{\beta\in\CL^*}\in\tilde{\CI}_\alpha$, and thus that $\phi$ is surjective.

Finally, since $\phi(s_{\alpha,B})=s_{\alpha,\iota(B)}$ for $\alpha\in\CL$ and $B\in\CI_\alpha$, we see that $\phi(s_{\gamma,B})=s_{\gamma,\iota(B)}$ for $\gamma\in\CL^*$ and $B\in\CI_\gamma$. It thus follows from \eqref{eq:1} that $\phi(s_{\gamma,C}s_{\gamma,C}^*)=p_{\iota_\gamma(C)}$ for $\gamma\in\CL^*$ and $C\in\CR_\gamma$.
\end{proof}

Suppose now that $\CH$ is a hereditary $\CJ$-saturated ideal of $\CB$ and $\CS$ is an ideal of $\CB_\CH$ such that $\CH\cup\CJ\subseteq\CS$. For $A\in\CS$, we let $\Delta_{[A]}:=\Delta_{[A]_\CH}^{(\CB/\CH,\CL,\theta)}$. Let $\tilde{\CS}$ be the Boolean subalgebra of $\tilde{\CB}$ generated by 
\begin{equation*}
\bigl\{\iota(A)\setminus\cup_{\alpha\in\Delta_{[A]}}\iota_\alpha(\theta_\alpha(A)):A\in\CS\bigr\}.
\end{equation*}
Suppose $A\in\CS$ and $\alpha'\in\CL$. If $\alpha'\in\Delta_{[A]}$, then 
\begin{equation*}
\tilde{\theta}_{\alpha'}\bigl(\iota(A)\setminus\cup_{\alpha\in\Delta_{[A]}}\iota_\alpha(\theta_\alpha(A))\bigr)=\emptyset.
\end{equation*}
If $\alpha'\notin\Delta_{[A]}$, then $\theta_{\alpha'}(A)\in\CH$, $\Delta_{[\theta_{\alpha'}(A)]}=\emptyset$, and
\begin{align*}
\tilde{\theta}_{\alpha'}\bigl(\iota(A)\setminus\cup_{\alpha\in\Delta_{[A]}}\iota_\alpha(\theta_\alpha(A))\bigr)&=\iota(\theta_{\alpha'}(A))\\
&=\iota(\theta_{\alpha'}(A))\setminus\cup_{\alpha\in\Delta_{[\theta_{\alpha'}(A)]}}\iota_\alpha(\theta_\alpha(\theta_{\alpha'}(A)))\in\tilde{\CS}.
\end{align*}
We thus have that $\theta_{\alpha'}(\tilde{\CS})\subseteq\tilde{\CS}$. So $\tilde{\theta}$ restricts to an action of $\CL$ on $\tilde{\CS}$. We will just write $\tilde{\theta}$ for the restriction of $\tilde{\theta}$ to $\tilde{\CS}$. We then have that $(\tilde{\CS},\CL,\tilde{\theta},\tilde{\CI}_\alpha\cap\tilde{\CS};\tilde{\CJ}\cap\tilde{\CS})$ is a relative generalized Boolean dynamical system.

\begin{thm}\label{gii}
Let $(\CB,\CL,\theta,\CI_\alpha;\CJ)$ be a relative generalized Boolean dynamical system, let $\CH$ be a hereditary $\CJ$-saturated ideal of $\CB$, and let $\CS$ be an ideal of $\CB_\CH$ such that $\CH\cup\CJ\subseteq\CS$. Let $\tilde{\CS}$ be as above. 

Then there is an injective $*$-homomorphism $\psi:C^*(\tilde{\CS},\CL,\tilde{\theta},\tilde{\CI}_\alpha\cap\tilde{\CS};\tilde{\CJ}\cap\tilde{\CS})\to C^*(\CB,\CL,\theta,\CI_\alpha;\CJ)$ such that 
$$\psi\bigl(p_{\iota(A)\setminus\cup_{\alpha\in\Delta_{[A]}}\iota_\alpha(\theta_\alpha(A))}\bigr)=p_A-\sum_{\alpha\in\Delta_{[A]}}s_{\alpha,\theta_\alpha(A)}s_{\alpha,\theta_\alpha(A)}^*$$
for $A\in\CS$, and
$$\psi\bigl(s_{\alpha',\iota(B)\setminus\cup_{\alpha\in\Delta_{[B]}}\iota_\alpha(\theta_\alpha(B))}\bigr)=s_{\alpha',B}-s_{\alpha,'B}\sum_{\alpha\in\Delta_{[B]}}s_{\alpha,\theta_\alpha(B)}s_{\alpha,\theta_\alpha(B)}^*$$
for $\alpha'\in\CL$ and $B\in\CI_{\alpha'}\cap\CS$.

The image of $\psi$ is a full hereditary $C^*$-subalgebra of $I_{(\CH,\CS)}$, and $I_{(\CH,\CS)}$ and $C^*(\tilde{\CS},\CL,\tilde{\theta},\tilde{\CI}_\alpha\cap\tilde{\CS};\tilde{\CJ}\cap\tilde{\CS})$ are Morita equivalent.
\end{thm}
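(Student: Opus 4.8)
The plan is to work inside the extended system and exploit the $*$-isomorphism $\phi$ of Theorem~\ref{prop:one}. Write $g_A:=\iota(A)\setminus\bigcup_{\alpha\in\Delta_{[A]}}\iota_\alpha(\theta_\alpha(A))\in\tilde{\CS}$ for $A\in\CS$. Using $\phi(s_{\gamma,C}s_{\gamma,C}^*)=p_{\iota_\gamma(C)}$ together with the fact that the projections $p_{\iota_\alpha(\theta_\alpha(A))}$ are pairwise orthogonal and dominated by $p_{\iota(A)}$ (they are supported on words beginning with distinct letters), a direct computation gives $\phi\bigl(p_A-\sum_{\alpha\in\Delta_{[A]}}s_{\alpha,\theta_\alpha(A)}s_{\alpha,\theta_\alpha(A)}^*\bigr)=p_{g_A}$, so $\phi$ carries the generators of $I_{(\CH,\CS)}$ onto the $p_{g_A}$. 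First I would check that $\{p_W,\ s_{\alpha,W'}:W\in\tilde{\CS},\ \alpha\in\CL,\ W'\in\tilde{\CI}_\alpha\cap\tilde{\CS}\}$, drawn from the universal representation of $C^*(\tilde{\CB},\CL,\tilde{\theta},\tilde{\CI}_\alpha;\tilde{\CJ})$, is a $(\tilde{\CS},\CL,\tilde{\theta},\tilde{\CI}_\alpha\cap\tilde{\CS};\tilde{\CJ}\cap\tilde{\CS})$-representation. Relations (i)--(iii) are inherited verbatim because $\tilde{\CS}$ is a $\tilde{\theta}$-invariant Boolean subalgebra; the only delicate point is the Cuntz--Krieger relation (iv), which matches because $\Delta_W$ is computed identically in $\tilde{\CS}$ and in $\tilde{\CB}$ and because $\tilde{\CJ}\cap\tilde{\CS}\subseteq\tilde{\CJ}$. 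Universality yields $\tilde{\psi}:C^*(\tilde{\CS},\CL,\tilde{\theta},\tilde{\CI}_\alpha\cap\tilde{\CS};\tilde{\CJ}\cap\tilde{\CS})\to C^*(\tilde{\CB},\CL,\tilde{\theta},\tilde{\CI}_\alpha;\tilde{\CJ})$, and I would set $\psi:=\phi^{-1}\circ\tilde{\psi}$. The stated formulas follow from $\phi^{-1}(p_{g_A})=p_A-\sum_{\alpha\in\Delta_{[A]}}s_{\alpha,\theta_\alpha(A)}s_{\alpha,\theta_\alpha(A)}^*$ together with $s_{\alpha',g_B}=s_{\alpha',\iota(B)}p_{g_B}$ (valid since $g_B\subseteq\iota(B)$) and $\phi(s_{\alpha',B})=s_{\alpha',\iota(B)}$.

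For injectivity I would invoke the gauge-invariant uniqueness theorem: $\tilde{\psi}$ intertwines the two gauge actions and $\tilde{\psi}(p_W)=p_W\ne 0$ for $\emptyset\ne W\in\tilde{\CS}$. The remaining hypothesis is that the gap projection $p_W-\sum_{\alpha\in\Delta_W}s_{\alpha,\tilde{\theta}_\alpha(W)}s_{\alpha,\tilde{\theta}_\alpha(W)}^*$ is nonzero in $C^*(\tilde{\CB},\ldots)$ for every $W$ regular in $\tilde{\CS}$ but not in $\tilde{\CJ}\cap\tilde{\CS}$. Since $W\in\tilde{\CS}$, the condition $W\notin\tilde{\CJ}\cap\tilde{\CS}$ forces $W\notin\tilde{\CJ}$. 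I would split on whether $W$ is regular in $\tilde{\CB}$: if it is, the gap is nonzero by the uniqueness theorem for $C^*(\tilde{\CB},\ldots)$ itself; if it is not, then since $\Delta_W$ is finite there is a nonempty $\tilde{\CB}$-subset $W'\subseteq W$ with $\Delta_{W'}=\emptyset$, and $p_{W'}\cdot\bigl(p_W-\sum_{\alpha}s_{\alpha,\tilde{\theta}_\alpha(W)}s_{\alpha,\tilde{\theta}_\alpha(W)}^*\bigr)=p_{W'}\ne 0$, so the gap survives.

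The heart of the argument -- and the step I expect to be the main obstacle -- is that the image $\operatorname{Im}(\tilde{\psi})=\overline{\operatorname{span}}\{s_{\mu,W}s_{\nu,W}^*:\mu,\nu\in\CL^*,\ W\in\tilde{\CS}\}$ is a hereditary subalgebra of $\CD:=\phi(I_{(\CH,\CS)})$. Since $\CD$ is an ideal, this is equivalent to heredity in $C^*(\tilde{\CB},\ldots)$, and using $p_W s_{\mu,A}=s_{\mu,A\cap\tilde{\theta}_\mu(W)}$ the relevant products reduce to $s_{\mu,E}s_{\nu,E}^*$ with $E=A\cap\tilde{\theta}_\mu(W_1)\cap\tilde{\theta}_\nu(W_2)$ for $W_1,W_2\in\tilde{\CS}$ and $A\in\tilde{\CB}$. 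Heredity therefore comes down to the purely combinatorial claim that $\tilde{\CS}$ is a downward-closed ideal of $\tilde{\CB}$, so that $E\in\tilde{\CS}$. This is exactly the delicate point: the analogous statement is \emph{false} at the level of $\overline{\CB}$ and becomes true only after passing to the quotient $\tilde{\CB}=\overline{\CB}/\overline{\CI}$, because a $\tilde{\CB}$-element below a generator $g_A$ differs from an element of the subalgebra generated by the $g_{A'}$ only by a finitely supported $\CJ$-valued family, which lies in $\overline{\CI}$. I would prove this by taking a representative $(B_\beta)_{\beta\in\CL^*}\in\overline{\CB}$ below a finite Boolean combination of the $g_A$ and using the tail-consistency condition $B_{\beta\alpha}=\theta_\alpha(B_\beta)$ to force its coordinates, off a finite set, to agree with those of a combination of the $g_{A'}$, the discrepancy lying in $\overline{\CI}$.

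Finally, fullness is straightforward: $\operatorname{Im}(\psi)$ contains every $p_A-\sum_{\alpha\in\Delta_{[A]}}s_{\alpha,\theta_\alpha(A)}s_{\alpha,\theta_\alpha(A)}^*$, and these generate $I_{(\CH,\CS)}$ as an ideal, so the ideal generated by $\operatorname{Im}(\psi)$ inside $I_{(\CH,\CS)}$ is all of it. Transporting through $\phi^{-1}$, $\operatorname{Im}(\psi)$ is a full hereditary $C^*$-subalgebra of $I_{(\CH,\CS)}$, and since $\psi$ is an isomorphism onto its image, the Morita equivalence of $I_{(\CH,\CS)}$ with $C^*(\tilde{\CS},\CL,\tilde{\theta},\tilde{\CI}_\alpha\cap\tilde{\CS};\tilde{\CJ}\cap\tilde{\CS})$ follows from the standard theorem that a $C^*$-algebra is Morita equivalent to any of its full hereditary subalgebras.
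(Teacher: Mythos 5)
Your construction of $\psi$ and your injectivity argument follow the same route as the paper: restrict the universal representation of $C^*(\tilde{\CB},\CL,\tilde{\theta},\tilde{\CI}_\alpha;\tilde{\CJ})$ to $\tilde{\CS}$, invoke universality, compose with $\phi^{-1}$, and apply the Gauge-Invariant Uniqueness Theorem. Those parts are fine, and your verification of the uniqueness-theorem hypotheses (nonvanishing of the gap projections for $W$ regular in $\tilde{\CS}$ but outside $\tilde{\CJ}\cap\tilde{\CS}$) is correct and more detailed than the paper's one-line appeal.

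The gap is exactly in the step you single out as the heart of the matter: your heredity argument reduces to the claim that $\tilde{\CS}$ is downward closed in $\tilde{\CB}$, and that claim is \emph{false}. Take $\CB$ to be the power set of $\{u,v,w\}$, $\CL=\{\alpha,\mu\}$, $\theta_\alpha(S)=\{w\}$ and $\theta_\mu(S)=\{v\}$ when $u\in S$ (and $\emptyset$ otherwise), $\CI_\alpha=\CR_\alpha$, $\CI_\mu=\CR_\mu$, $\CJ=\{\emptyset\}$, $\CH=\{\emptyset,\{v\}\}$, and $\CS=\CB_\CH=\{\emptyset,\{v\},\{u\},\{u,v\}\}$. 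Since $\CJ=\{\emptyset\}$ we have $\overline{\CI}=\{0\}$, so $\tilde{\CB}=\overline{\CB}$, and every element of $\tilde{\CB}$ is determined by its coordinates at the words $\emptyset,\alpha,\mu$ (all longer compositions vanish). One computes $\Delta_{[\{u\}]}=\Delta_{[\{u,v\}]}=\{\alpha\}$, so in your notation $g_{\{v\}}=(\{v\},\emptyset,\emptyset)$, $g_{\{u\}}=(\{u\},\emptyset,\{v\})$ (the coordinate $\theta_\mu(\{u\})=\{v\}\in\CH$ at the word $\mu$ survives because $\mu\notin\Delta_{[\{u\}]}$), and $g_{\{u,v\}}=g_{\{u\}}\cup g_{\{v\}}$; hence $\tilde{\CS}=\{\emptyset,\,g_{\{v\}},\,g_{\{u\}},\,g_{\{u,v\}}\}$. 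But $\iota_\mu(\{v\})=(\emptyset,\emptyset,\{v\})$ is a nonzero element of $\tilde{\CB}$ with $\iota_\mu(\{v\})\subseteq g_{\{u\}}$ and $\iota_\mu(\{v\})\notin\tilde{\CS}$. This also pinpoints why your supporting heuristic fails: the discrepancy between a $\tilde{\CB}$-element below $g_A$ and members of $\tilde{\CS}$ is indeed finitely supported, but its coordinates are the tails $\theta_\beta(A)\in\CH$ for $\beta$ starting outside $\Delta_{[A]}$, i.e.\ they are $\CH$-valued rather than $\CJ$-valued; $\overline{\CI}$ absorbs only $\CJ$-valued families, and $\CH\not\subseteq\CJ$ in general, so passing to the quotient $\overline{\CB}/\overline{\CI}$ does not restore downward closedness.

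Note that the theorem itself is not contradicted by this example: $p_{\iota_\mu(\{v\})}=\phi\bigl(s_{\mu,\{v\}}s_{\mu,\{v\}}^*\bigr)=s_{\mu,\iota(\{v\})}s_{\mu,\iota(\{v\})}^*$ \emph{does} lie in $\operatorname{Im}(\tilde{\psi})$, because $\iota(\{v\})=g_{\{v\}}\in\tilde{\CI}_\mu\cap\tilde{\CS}$. This is precisely why your reduction is too crude: an element $s_{\rho,E}s_{\sigma,E}^*$ can belong to $\operatorname{Im}(\tilde{\psi})$ without $E\in\tilde{\CS}$, so heredity does not ``come down to'' downward closedness of $\tilde{\CS}$. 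A correct argument must exploit the additional projections $p_{\iota_\gamma(C)}=\phi(s_{\gamma,C}s_{\gamma,C}^*)$ that the image contains and close the product computation inside the span of all elements of the form $s_{\rho,\cdot}\,s_{\sigma,\cdot}^*$ built from them; this is what the paper's identification of $\operatorname{Im}(\psi)$ with the $C^*$-subalgebra generated by the gap elements implicitly relies on (the paper's own justification of the hereditary/full step is admittedly terse, so you were right that this is the crux --- but the lemma you propose to fill it is not true). As it stands, the Morita-equivalence half of your proof has a genuine hole.
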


\begin{proof}
Let $\eta:C^*(\tilde{\CB},\CL,\tilde{\theta},\tilde{\CI}_\alpha;\tilde{\CJ})\to C^*(\CB,\CL,\theta,\CI_\alpha;\CJ)$ be the inverse of the $*$-isomorphism $\phi:C^*(\CB,\CL,\theta,\CI_\alpha;\CJ)\to C^*(\tilde{\CB},\CL,\tilde{\theta},\tilde{\CI}_\alpha;\tilde{\CJ})$ constructed in Theorem~\ref{prop:one}. For $\tilde{A}\in\tilde{\CS}$, $\alpha\in\CL$, and $\tilde{B}\in\tilde{\CI}_\alpha\cap\tilde{\CS}$, let $P_{\tilde{A}}:=\eta(p_{\tilde{A}})$ and $S_{\alpha, \tilde{B}}:=\eta(s_{\alpha, \tilde{B}})$. Then $\{P_{\tilde{A}},\ S_{\alpha, \tilde{B}}:\tilde{A}\in\tilde{\CS},\ \alpha\in\CL,\ \tilde{B}\in\tilde{\CI}_\alpha\cap\tilde{\CS}\}$ is a $(\tilde{\CS},\CL,\tilde{\theta},\tilde{\CI}_\alpha\cap\tilde{\CS};\tilde{\CJ}\cap\tilde{\CS})$-representation. It thus follows from the universal property of $C^*(\tilde{\CS},\CL,\tilde{\theta},\tilde{\CI}_\alpha\cap\tilde{\CS};\tilde{\CJ}\cap\tilde{\CS})$ that there is $*$-homomorphism $\psi:C^*(\tilde{\CS},\CL,\tilde{\theta},\tilde{\CI}_\alpha\cap\tilde{\CS};\tilde{\CJ}\cap\tilde{\CS})\to C^*(\CB,\CL,\theta,\CI_\alpha;\CJ)$ such that 
\begin{align*}
\psi\bigl(p_{\iota(A)\setminus\cup_{\alpha\in\Delta_{[A]}}\iota_\alpha(\theta_\alpha(A))}\bigr)&=P_{\iota(A)\setminus\cup_{\alpha\in\Delta_{[A]}}\iota_\alpha(\theta_\alpha(A))}=\eta\bigl(p_{\iota(A)\setminus\cup_{\alpha\in\Delta_{[A]}}\iota_\alpha(\theta_\alpha(A))}\bigr)\\
&=p_A-\sum_{\alpha\in\Delta_{[A]}}s_{\alpha,\theta_\alpha(A)}s_{\alpha,\theta_\alpha(A)}^*
\end{align*}
for $A\in\CS$, and
\begin{align*}
\psi\bigl(s_{\alpha',\iota(B)\setminus\cup_{\alpha\in\Delta_{[B]}}\iota_\alpha(\theta_\alpha(B))}\bigr)&=S_{\alpha',\iota(B)\setminus\cup_{\alpha\in\Delta_{[B]}}\iota_\alpha(\theta_\alpha(B))}\\
&=\eta(s_{\alpha',\iota(B)\setminus\cup_{\alpha\in\Delta_{[B]}}\iota_\alpha(\theta_\alpha(B))})\\
&=s_{\alpha',B}-s_{\alpha,'B}\sum_{\alpha\in\Delta_{[B]}}s_{\alpha,\theta_\alpha(B)}s_{\alpha,\theta_\alpha(B)}^*
\end{align*}
for $\alpha'\in\CL$ and $B\in\CI_{\alpha'}\cap\CS$.

A straightforward application of the Gauge-Invariant Uniqueness Theorem gives us that $\psi$ is injective. Since the image of $\psi$ is the $C^*$-subalgebra generated by 
\begin{multline*}
\bigl\{p_A-\sum_{\alpha\in\Delta_{[A]}}s_{\alpha,\theta_\alpha(A)}s_{\alpha,\theta_\alpha(A)}^*,\\ s_{\alpha',B}-s_{\alpha,'B}\sum_{\alpha\in\Delta_{[B]}}s_{\alpha,\theta_\alpha(B)}s_{\alpha,\theta_\alpha(B)}^*: A\in\CS,\ \alpha'\in\CL,\ B\in\CI_{\alpha'}\cap\CS\bigr\},
\end{multline*}
and $I_{(\CH,\CS)}$ is the ideal generated by the same set, it follows that the image of $\psi$ is a full hereditary $C^*$-subalgebra of $I_{(\CH,\CS)}$, and thus that $I_{(\CH,\CS)}$ and $C^*(\tilde{\CS},\CL,\tilde{\theta},\tilde{\CI}_\alpha\cap\tilde{\CS};\tilde{\CJ}\cap\tilde{\CS})$ are Morita equivalent.
\end{proof}

\begin{cor}
Let $(\CB,\CL,\theta,\CI_\alpha;\CJ)$ be a relative generalized Boolean dynamical system. Then every gauge-invariant ideal of $C^*(\CB,\CL,\theta,\CI_\alpha;\CJ)$ is Morita equivalent to the $C^*$-algebra of a relative generalized Boolean dynamical system.
\end{cor}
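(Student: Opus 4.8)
The plan is to reduce the corollary to Theorem~\ref{gii} by invoking the classification of gauge-invariant ideals recalled in Subsection~2.3. That lattice isomorphism tells us that every gauge-invariant ideal arises as $I_{(\CH,\CS)}$ for an appropriate pair $(\CH,\CS)$, and Theorem~\ref{gii} already identifies each $I_{(\CH,\CS)}$ up to Morita equivalence with the $C^*$-algebra of a relative generalized Boolean dynamical system, so the result follows once these facts are assembled.

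Concretely, I would fix a gauge-invariant ideal $I$ of $C^*(\CB,\CL,\theta,\CI_\alpha;\CJ)$ and set $\CH:=\CH_I$ and $\CS:=\CS_I$. By the discussion in Subsection~2.3, $\CH$ is a hereditary $\CJ$-saturated ideal of $\CB$, $\CS$ is an ideal of $\CB_\CH$ with $\CH\cup\CJ\subseteq\CS$, and, because $I$ is gauge-invariant, $I=I_{(\CH,\CS)}$. This is the sole use of gauge-invariance, and it is exactly what places the pair $(\CH_I,\CS_I)$ within the hypotheses of Theorem~\ref{gii}. Forming the extension $(\tilde{\CB},\CL,\tilde{\theta},\tilde{\CI}_\alpha;\tilde{\CJ})$ and the Boolean subalgebra $\tilde{\CS}\subseteq\tilde{\CB}$ constructed just before that theorem, and recalling that $(\tilde{\CS},\CL,\tilde{\theta},\tilde{\CI}_\alpha\cap\tilde{\CS};\tilde{\CJ}\cap\tilde{\CS})$ is itself a relative generalized Boolean dynamical system, Theorem~\ref{gii} yields a Morita equivalence between $I_{(\CH,\CS)}$ and $C^*(\tilde{\CS},\CL,\tilde{\theta},\tilde{\CI}_\alpha\cap\tilde{\CS};\tilde{\CJ}\cap\tilde{\CS})$. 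Since $I=I_{(\CH,\CS)}$, this exhibits $I$ as Morita equivalent to the $C^*$-algebra of a relative generalized Boolean dynamical system, which is the claim.

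I do not expect a genuine obstacle here, because all the substantive work---the extension procedure, the subsystem $\tilde{\CS}$, the injective $*$-homomorphism $\psi$, and the verification that its image is a full hereditary subalgebra of $I_{(\CH,\CS)}$---has already been carried out in Theorem~\ref{gii}. The corollary is thus a direct specialization, and the only point requiring attention is the bookkeeping of matching hypotheses: confirming that the pair $(\CH_I,\CS_I)$ attached to an arbitrary gauge-invariant ideal lands in the domain of Theorem~\ref{gii}, which the lattice isomorphism supplies automatically.
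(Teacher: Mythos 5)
Your proposal is correct and follows exactly the intended argument: the paper's corollary is a direct consequence of the Subsection~2.3 classification (a gauge-invariant ideal $I$ satisfies $I=I_{(\CH_I,\CS_I)}$) combined with Theorem~\ref{gii}, which is precisely how you assemble it. The bookkeeping of hypotheses you flag is indeed the only thing to check, and the lattice isomorphism supplies it, so there is nothing to add.
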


\section{$K_0$-liftability}\label{$K_0$-liftability} 

Let $(\CB,\CL,\theta,\CI_\alpha;\CJ)$ be a relative generalized Boolean dynamical system. We in this section prove that if  $(\CB,\CL,\theta)$ satisfies Condition $(K)$, then $C^*(\CB,\CL,\theta, \CI_\af;\CJ)$ is $K_0$-liftable.
We recall that a $C^*$-algebra is said to be {\it $K_0$-liftable} (\cite[Definition 3.1]{PR2007}) if every pair of ideals $I,J$ in $A$, the extension
$$\xymatrix{
0 \ar[r] & I  \ar[r]^{i}  & J \ar[r]^{\pi} & J/I \ar[r] & 0\\ 
}$$
has the property that $K_0(\pi): K_0(J) \to K_0(J/I)$ is surjective, or equivalently that 
$K_1(\iota): K_1(I) \to K_1(J)$ is injective. 
It is also known that $A$ is $K_0$-liftable if $K_1(\iota): K_1(I) \to K_1(A)$ is injective for every ideal $I$ of $A$.

\begin{thm}\label{K0-liftable} Let $(\CB,\CL,\theta, \CI_\af;\CJ)$ be a relative generalized Boolean dynamical system. If $(\CB,\CL,\theta)$ satisfies Condition $(K)$, then $C^*(\CB,\CL,\theta, \CI_\af;\CJ)$ is $K_0$-liftable.
\end{thm}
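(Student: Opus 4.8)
The plan is to verify the criterion recalled just above the statement: it suffices to show that $K_1(\iota)\colon K_1(I)\to K_1(A)$ is injective for every ideal $I$ of $A:=C^*(\CB,\CL,\theta,\CI_\af;\CJ)$. First I would exploit Condition~(K): by \cite[Theorem 3.1]{CaK3} every ideal $I$ of $A$ is then gauge-invariant, so $I=I_{(\CH,\CS)}$ for a hereditary $\CJ$-saturated ideal $\CH$ and an ideal $\CS$ of $\CB_\CH$ with $\CH\cup\CJ\subseteq\CS$, and, as recalled in Section~\ref{preliminary}, there is a $*$-isomorphism $\phi\colon C^*(\CB/\CH,\CL,\theta,[\CI_\af];[\CS])\to A/I$ with $\phi(p_{[A]})=p_A+I$ for every $A\in\CB$.

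Next I would write out the six-term exact sequence associated with the extension $0\to I\to A\xrightarrow{\pi}A/I\to 0$,
\[
\cdots\to K_0(A)\xrightarrow{\pi_*}K_0(A/I)\xrightarrow{\partial}K_1(I)\xrightarrow{\iota_*}K_1(A)\to\cdots,
\]
where $\partial$ denotes the exponential map. By exactness at $K_1(I)$ one has $\ker(\iota_*)=\operatorname{Im}(\partial)$, so for this (arbitrary) $I$ the statement reduces to the single claim that $\partial=0$; this is exactly the assertion that $K_0(\pi)$ is surjective.

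The core of the proof is to produce a generating set of $K_0(A/I)$ consisting of classes that visibly lift to projections in $A$. Applying Proposition~\ref{K-theory of RGBDS} to the relative generalized Boolean dynamical system $(\CB/\CH,\CL,\theta,[\CI_\af];[\CS])$, its $K_0$-group is a cokernel generated, as an abelian group, by the vertex classes $[p_{[A]}]_0$ with $[A]\in\CB/\CH$. Transporting these along $\phi$ shows that $K_0(A/I)$ is generated by $\{[\pi(p_A)]_0 : A\in\CB\}$. For each $A\in\CB$ the projection $\pi(p_A)\in A/I$ lifts to the projection $p_A\in A$, and since $p_A$ is a projection we have $\exp(2\pi i\,p_A)=1$; hence $\partial([\pi(p_A)]_0)=[1]_1=0$ in $K_1(I)$. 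As $\partial$ is a group homomorphism that vanishes on a generating set, $\partial=0$, and therefore $\iota_*\colon K_1(I)\to K_1(A)$ is injective. Since $I$ was an arbitrary ideal, $A$ is $K_0$-liftable.

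The step I expect to be the main obstacle is justifying that $\{[\pi(p_A)]_0 : A\in\CB\}$ genuinely generates $K_0(A/I)$ by classes lifting to honest projections; this is precisely where Condition~(K) is indispensable, since it forces every ideal to be gauge-invariant and thereby identifies $A/I$ with the $C^*$-algebra of the quotient system, to which the explicit $K_0$-computation of Proposition~\ref{K-theory of RGBDS} applies. One could alternatively invoke the Morita equivalence of Theorem~\ref{gii} to identify $K_1(I)$ with the $K_1$-group of a subsystem and then track the explicit generators $U_x$ of Theorem~\ref{generator of K1} into $K_1(A)$, but the exponential-map computation above is considerably shorter.
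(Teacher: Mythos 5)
Your proof is correct, and it takes a genuinely different route from the paper's. The paper argues on the ideal/$K_1$ side: after writing $I=I_{(\CH,\CS)}$, it uses the Morita equivalence of Theorem~\ref{gii} to identify $K_1(I)$ with $K_1(C^*(\tilde{\CS},\CL,\tilde{\theta},\tilde{\CI}_\af\cap\tilde{\CS};\tilde{\CJ}\cap\tilde{\CS}))$, applies the explicit isomorphism of Theorem~\ref{generator of K1} both to this subsystem and (via Theorem~\ref{prop:one}) to the ambient algebra, and concludes that $K_1(\iota)$ is injective because, under these identifications, it becomes the evident inclusion $\ker((1-\Phi)|_{\tilde{\CJ}\cap\tilde{\CS}})\hookrightarrow\ker((1-\Phi)|_{\tilde{\CJ}})$. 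You instead argue on the quotient/$K_0$ side: the identification $A/I\cong C^*(\CB/\CH,\CL,\theta,[\CI_\af];[\CS])$ recalled in Section~\ref{preliminary}, together with the $K_0$-computation, shows that $K_0(A/I)$ is generated by classes of projections that lift to projections in $A$, so the exponential map vanishes and exactness gives injectivity of $K_1(\iota)$. Your route is shorter and more elementary: it bypasses Section~\ref{Morita} entirely (the extension $(\tilde{\CB},\CL,\tilde{\theta},\tilde{\CI}_\af;\tilde{\CJ})$ and Theorem~\ref{gii}) and never touches the unitaries $U_x$ of Theorem~\ref{generator of K1}; what the paper's route buys in exchange is finer information, namely a concrete model of $K_1(I)$ and of the map $K_1(\iota)$ itself, in keeping with the paper's emphasis on explicit $K_1$-generators. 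One citation detail worth tightening: the displayed $K_0$-isomorphism in Proposition~\ref{K-theory of RGBDS}(2) is stated only for $A\in\CJ$, so for your generating claim it is cleanest to note that, since $K_1$ of the coefficient ideal $K_{[\CS]}$ vanishes, the six-term sequence of \cite[Proposition 8.7]{Ka2004c} makes $\pi_*\colon K_0(\CA(\CB/\CH,\CL,\theta))\to K_0(A/I)$ surjective, and $K_0$ of the coefficient algebra is spanned by the classes $[\chi_{[A]}]_0$ with $[A]\in\CB/\CH$; this yields exactly the generating set $\{[\pi(p_A)]_0 : A\in\CB\}$ you need.
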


\begin{proof}
Choose a non-trivial ideal $I$ of $C^*(\CB,\CL,\theta, \CI_\af;\CJ)$. We shall show that the inclusion $\iota: I \to C^*(\CB,\CL,\theta, \CI_\af;\CJ)$ induces an injective map at the level of $K_1$-groups. Since $(\CB,\CL,\theta)$ satisfies Condition $(K)$, 
$I=I_{(\CH,\CS)}$ for some hereditary $\CJ$-saturated ideal $\CH$ of $\CB$ and an ideal $\CS$ of $\CB_\CH$  by \cite[Theorem 3.1]{CaK3} and \cite[Theorem 7.4]{CaK2}. Then, $I_{(\CH,\CS)}$ is Morita equivalent to 
$C^*(\widetilde{\CS},\CL,\widetilde{\theta},\widetilde{\CI}_\alpha\cap\widetilde{\CS};\widetilde{\CJ}\cap\widetilde{\CS})$
by Theorem \ref{gii}, which induces an isomorphism 
\[
\rho: K_1(C^*(\widetilde{\CS},\CL,\widetilde{\theta},\widetilde{\CI}_\alpha\cap\widetilde{\CS};\widetilde{\CJ}\cap\widetilde{\CS})) \to K_1(I_{(\CH,\CS)}).
\]
Applying Theorem \ref{generator of K1} to $(\widetilde{\CS},\CL,\widetilde{\theta},\tilde{\CI}_\alpha\cap\widetilde{\CS};\widetilde{\CJ}\cap\widetilde{\CS})$, we obtain a group isomorphism
\[
\chi_1': \ker ((1-\Phi)|_{\widetilde{\CJ}\cap\widetilde{\CS}} ) \to K_1(C^*(\widetilde{\CS},\CL,\widetilde{\theta},\widetilde{\CI}_\alpha\cap\widetilde{\CS};\widetilde{\CJ}\cap\widetilde{\CS})),
\]
where 
\[
(1-\Phi)|_{\widetilde{\CJ}\cap\widetilde{\CS}}: \operatorname{span}_\Z\{\chi_{\tilde{A}}: \tilde{A} \in \widetilde{\CJ}\cap\widetilde{\CS} \} \to \operatorname{span}_\Z\{\chi_{\tilde{A}}: \tilde{A} \in \widetilde{\CS}\}
\]
is defined by $\chi_{\tilde{A}} \mapsto \chi_{\tilde{A}} - \sum_{\af \in \triangle_{\tilde{A}}} \chi_{\widetilde{\theta}_{\af}(\tilde{A})}$ for $\tilde{A} \in \widetilde{\CJ}\cap\widetilde{\CS}$. Thus, we have a group isomorphism
\[
\rho \circ \chi_1': \ker ((1-\Phi)|_{\widetilde{\CJ}\cap\widetilde{\CS}}) \to K_1(I_{(\CH,\CS)}).
\]

Since $\widetilde{\CJ}\cap\widetilde{\CS} \subseteq \widetilde{\CJ}$, there is a natural inclusion 
\[
I_1: \ker ((1-\Phi)|_{\widetilde{\CJ}\cap\widetilde{\CS}}) \hookrightarrow \ker((1-\Phi)|_{\widetilde{\CJ}}),
\]
where 
\[
(1-\Phi)|_{\widetilde{\CJ}}: \operatorname{span}_\Z\{\chi_{\tilde{A}}: \tilde{A} \in \widetilde{\CJ}\} \to \operatorname{span}_\Z\{\chi_{\tilde{A}}: \tilde{A} \in \widetilde{\CB}\}
\]
is defined similarly. Let $\eta: C^*(\widetilde{\CB},\CL,\widetilde{\theta},\widetilde{\CI}_\alpha;\widetilde{\CJ}) \to C^*(\CB,\CL,\theta,\CI_\alpha;\CJ)$ be the isomorphism obtained from Theorem \ref{prop:one}, and let 
\[
\chi_1: \ker((1-\Phi)|_{\widetilde{\CJ}}) \rightarrow K_1(C^*(\widetilde{\CB},\CL,\widetilde{\theta},\widetilde{\CI}_\alpha;\widetilde{\CJ}))
\]
be the group isomorphism from Theorem \ref{generator of K1}. Then the following diagram commutes:
\[
\begin{tikzcd}
\ker((1 - \Phi)|_{\widetilde{\CJ} \cap \widetilde{\CS}}) 
  \arrow[r, "\chi_1'"] 
  \arrow[d, hookrightarrow, "I_1"'] 
  & K_1(C^*(\widetilde{\CS}, \CL, \widetilde{\theta}, \widetilde{\CI}_\alpha \cap \widetilde{\CS}; \widetilde{\CJ} \cap \widetilde{\CS})) 
  \arrow[r, "\rho"] 
  \arrow[d, hookrightarrow] 
  & K_1(I) 
  \arrow[d, "K_1(\iota)"] \\
\ker((1 - \Phi)|_{\widetilde{\CJ}}) 
  \arrow[r, "\chi_1"] 
  & K_1(C^*(\widetilde{\CB}, \CL, \widetilde{\theta}, \widetilde{\CI}_\alpha; \widetilde{\CJ})) 
  \arrow[r, "K_1(\eta)"] 
  & K_1(C^*(\CB, \CL, \theta, \CI_\alpha; \CJ)),
\end{tikzcd}
\] which can be succinctly expressed by the identity
\[
K_1(\iota) \circ (\rho \circ \chi_1') = (K_1(\eta) \circ \chi_1) \circ I_1.
\]
It thus follows that $K_1(\iota)$ is injective. Hence, $C^*(\CB,\CL,\theta, \CI_\af;\CJ)$ is $K_0$-liftable.

\end{proof}

\begin{cor}
Let $(\CB,\CL,\theta, \CI_\af;\CJ)$ be a relative generalized Boolean dynamical system such that $\CB$ and $\CL$ are countable. If  $C^*(\CB,\CL,\theta, \CI_\af;\CJ)$ is  purely infinite, then  $C^*(\CB,\CL,\theta, \CI_\af;\CJ)$ is of real rank zero, and, in this case,  $C^*(\CB,\CL,\theta, \CI_\af;\CJ)$ is of real rank zero if and only if $(\CB,\CL,\theta)$  satisfies Condition (K).
\end{cor}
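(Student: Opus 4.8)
The plan is to combine three inputs—the permanence properties of pure infiniteness, the $K_0$-liftability established in Theorem~\ref{K0-liftable}, and the Pasnicu--R\o rdam characterization of real rank zero for purely infinite algebras \cite{PR2007}—and to feed into them the ideal-theoretic description from Section~\ref{Morita}. Write $A:=C^*(\CB,\CL,\theta,\CI_\af;\CJ)$; since $\CB$ and $\CL$ are countable, $A$ is separable. The heart of the matter is that, under the standing hypothesis that $A$ is purely infinite, the three properties ``$A$ has real rank zero'', ``$A$ is $K_0$-liftable'', and ``$(\CB,\CL,\theta)$ satisfies Condition~(K)'' are equivalent; the first clause of the statement then comes for free because pure infiniteness forces Condition~(K).

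First I would establish the implication ``Condition~(K) $\Rightarrow$ real rank zero''. By Theorem~\ref{K0-liftable}, Condition~(K) makes $A$ $K_0$-liftable. To apply \cite{PR2007} I must check that $\operatorname{Prim}(A)$ admits a basis of compact-open sets, and this is where the combinatorial model is used: the spectrum $\widehat{\CB}$ of $\CA=C_0(\widehat{\CB})$ is totally disconnected with each $Z(A)$ compact-open, and under Condition~(K) every ideal of $A$ is gauge-invariant by \cite{CaK3}, so the entire ideal lattice is the combinatorial lattice of pairs $(\CH,\CS)$ recalled in Section~\ref{preliminary}, from which a basis of compact-open sets for $\operatorname{Prim}(A)$ can be read off. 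Granting this, the Pasnicu--R\o rdam theorem gives that a separable purely infinite $C^*$-algebra whose primitive ideal space has a basis of compact-open sets has real rank zero if and only if it is $K_0$-liftable; hence Condition~(K) yields real rank zero.

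The reverse obstruction—covering both ``purely infinite $\Rightarrow$ Condition~(K)'' and ``real rank zero $\Rightarrow$ Condition~(K)''—I would handle by a single subquotient argument, which I expect to be the main difficulty. Suppose Condition~(K) fails; then by definition there is an ultrafilter cycle $(\beta,\eta)$ and an $A_0\in\eta$ witnessing the failure (no ``entry'', in the sense of the exclusion clause in the definition of Condition~(K)). Using the lattice isomorphism between gauge-invariant ideals and pairs $(\CH,\CS)$ together with the Morita equivalence of Theorem~\ref{gii}, this isolated cycle should yield a subquotient $B$ of $A$ (an ideal of a quotient of $A$) that is Morita equivalent to the $C^*$-algebra of a single cycle of length $|\beta|$, namely $C(\mathbb{T})$. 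Now $C(\mathbb{T})$ is neither purely infinite (it has nonzero finite projections) nor of real rank zero (its real rank is one); since both pure infiniteness and real rank zero are invariant under Morita equivalence of separable $C^*$-algebras, $B$ inherits these defects. As both properties also pass to quotients and to hereditary subalgebras (Kirchberg--R\o rdam, for pure infiniteness), the existence of $B$ contradicts pure infiniteness of $A$, and likewise contradicts real rank zero of $A$. Hence $A$ purely infinite $\Rightarrow$ Condition~(K), and $A$ of real rank zero $\Rightarrow$ Condition~(K).

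Finally I would assemble the pieces. Assuming $A$ purely infinite, the obstruction argument yields Condition~(K), and then the second paragraph gives that $A$ has real rank zero, which is the first assertion; combining the obstruction's ``real rank zero $\Rightarrow$ Condition~(K)'' with the forward ``Condition~(K) $\Rightarrow$ real rank zero'' produces the claimed equivalence. The step I anticipate as most delicate is the explicit construction of the $C(\mathbb{T})$-type subquotient from a failure of Condition~(K): one must translate the purely combinatorial exclusion condition on the ultrafilter cycle into the correct hereditary $\tilde{\CJ}$-saturated ideal and associated subsystem so that Theorem~\ref{gii} applies and the resulting system is genuinely a single cycle. A secondary technical point is verifying that $\operatorname{Prim}(A)$ has a basis of compact-open sets, which is needed to license the Pasnicu--R\o rdam characterization.
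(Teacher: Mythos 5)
Your outline of the positive implication coincides with the paper's: Condition~(K) gives $K_0$-liftability via Theorem~\ref{K0-liftable}, one checks that the primitive ideal space has a basis of compact-open sets (i.e.\ topological dimension zero), and then the Pasnicu--R\o rdam theorem \cite[Theorem 4.2]{PR2007} yields real rank zero for the separable, purely infinite algebra. The difference is that the paper obtains its two remaining inputs --- ``purely infinite $\Rightarrow$ Condition~(K)'' and ``Condition~(K) $\Rightarrow$ topological dimension zero'' --- by citation to \cite[Proposition 1.5 and Theorem 3.1]{CaK3}, whereas you propose to reprove both from scratch, and it is exactly there that your argument has genuine gaps. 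Your $C(\mathbb{T})$-subquotient construction from a failure of Condition~(K) is only a sketch: translating an ultrafilter cycle $(\beta,\eta)$ with no entry into a concrete hereditary $\CJ$-saturated ideal $\CH$ and ideal $\CS$ so that Theorem~\ref{gii} applies and the resulting subsystem genuinely has $C^*$-algebra Morita equivalent to $C(\mathbb{T})$ is the substantive content of the cited results in \cite{CaK3}, not a routine verification (note, e.g., that $\eta$ is a filter, not an ideal, so the passage to a pair $(\CH,\CS)$ is not immediate), and you yourself flag it as the step you cannot yet do. The same applies to the claim that a compact-open basis for $\mathrm{Prim}(A)$ ``can be read off'' from the lattice of pairs $(\CH,\CS)$: the lattice isomorphism alone does not hand you the hull-kernel topology statement, and no argument is given.

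Two further remarks. First, your plan makes the final equivalence harder than it is: once pure infiniteness forces Condition~(K) and hence (by the positive direction) real rank zero, both sides of the asserted ``if and only if'' hold automatically, so the equivalence is immediate --- this is exactly how the paper disposes of it, and no separate implication ``real rank zero $\Rightarrow$ Condition~(K)'' is needed (that implication, while plausible and provable by your subquotient strategy, is again left unproven). Second, the permanence facts you invoke (pure infiniteness and real rank zero pass to ideals, quotients, and hereditary subalgebras, and are invariants of Morita equivalence for separable algebras) are correct and unproblematic. In short: the architecture is right and could be repaired simply by citing \cite{CaK3} where the paper does, but as written the proposal replaces the two deepest inputs with acknowledged placeholders, so it does not constitute a proof.
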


\begin{proof}If  $C^*(\CB,\CL,\theta, \CI_\af;\CJ)$ is  purely infinite, then $(\CB, \CL,\theta)$ satisfies Condition (K) by \cite[Proposition 1.5 and Theorem 3.1]{CaK3}. Thus, $C^*(\CB,\CL,\theta, \CI_\af;\CJ)$ is $K_0$-liftable by Theorem \ref{K0-liftable}, and  the topological dimension of $C^*(\CB,\CL,\theta,\CI_\alpha;\CJ)$ is zero by \cite[Theorem 3.1]{CaK3}. Hence, by \cite[Theorem 4.2]{PR2007}, the real rank of  $C^*(\CB,\CL,\theta, \CI_\af;\CJ)$ is zero. 

The last statement follows immediately. 
\end{proof}

\begin{cor}\label{coro: Condition (K)}
Let $(\CB,\CL,\theta, \CI_\af;\CJ)$ be a relative generalized Boolean dynamical system such that $\CB$ and $\CL$ are countable. Then the following are equivalent.
\begin{enumerate}
\item $(\CB, \CL, \theta)$ satisfies Condition $(K)$.
\item $C^*(\CB,\CL,\theta, \CI_\af;\CJ) \otimes \CO_{2}$ has real rank zero.
\item $C^*(\CB,\CL,\theta, \CI_\af;\CJ) \otimes \CO_{\infty}$ has real rank zero.
\end{enumerate}
\end{cor}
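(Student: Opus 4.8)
The plan is to reduce the statement to two external facts: the Pasnicu--Rørdam characterization \cite[Theorem 4.2]{PR2007}, that a separable, purely infinite $C^*$-algebra has real rank zero if and only if it is $K_0$-liftable and has topological dimension zero, together with \cite[Theorem 3.1]{CaK3}, which identifies Condition $(K)$ for $(\CB,\CL,\theta)$ with topological dimension zero of $A:=C^*(\CB,\CL,\theta,\CI_\af;\CJ)$. First I would record the standing preparations. Since $\CB$ and $\CL$ are countable, the generating set $\{p_A\}\cup\{s_{\af,B}\}$ is countable, so $A$ is separable; by the Kirchberg--Rørdam absorption results both $A\otimes\CO_2$ and $A\otimes\CO_\infty$ are separable and purely infinite. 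As $\CO_2$ and $\CO_\infty$ are simple and nuclear, the ideals of $A\otimes\CO_2$ (resp.\ $A\otimes\CO_\infty$) are exactly the $I\otimes\CO_2$ (resp.\ $I\otimes\CO_\infty$) for $I$ an ideal of $A$; in particular $\mathrm{Prim}(A\otimes\CO_2)\cong\mathrm{Prim}(A)\cong\mathrm{Prim}(A\otimes\CO_\infty)$, so all three algebras have the same topological dimension.

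For the $K$-theory I would use that $\CO_2$ is $KK$-equivalent to $0$, so $K_*(I\otimes\CO_2)=0$ for every ideal $I$ of $A$; hence $A\otimes\CO_2$ is trivially $K_0$-liftable. For $\CO_\infty$, the unital inclusion $\C\hookrightarrow\CO_\infty$ is a $KK$-equivalence, so $a\mapsto a\otimes 1_{\CO_\infty}$ induces an isomorphism on $K$-theory that is natural in $A$; applying naturality to the inclusion $I\hookrightarrow A$ of an ideal and the corresponding inclusion $I\otimes\CO_\infty\hookrightarrow A\otimes\CO_\infty$ shows that $K_1(I\otimes\CO_\infty)\to K_1(A\otimes\CO_\infty)$ is injective if and only if $K_1(I)\to K_1(A)$ is injective. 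Thus $A\otimes\CO_\infty$ is $K_0$-liftable if and only if $A$ is.

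With these in hand, the forward implications $(1)\Rightarrow(2)$ and $(1)\Rightarrow(3)$ proceed as follows: Condition $(K)$ gives that $A$ is $K_0$-liftable by Theorem~\ref{K0-liftable} and has topological dimension zero by \cite[Theorem 3.1]{CaK3}; hence $A\otimes\CO_2$ and $A\otimes\CO_\infty$ are $K_0$-liftable (automatically, respectively by the transfer above) and have topological dimension zero, so \cite[Theorem 4.2]{PR2007} yields real rank zero. For the converses $(2)\Rightarrow(1)$ and $(3)\Rightarrow(1)$, real rank zero of the separable, purely infinite algebra $A\otimes\CO_2$ (resp.\ $A\otimes\CO_\infty$) forces topological dimension zero by \cite[Theorem 4.2]{PR2007}; since topological dimension is unchanged by tensoring with $\CO_2$ or $\CO_\infty$, the algebra $A$ has topological dimension zero, which is Condition $(K)$ by \cite[Theorem 3.1]{CaK3}. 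This closes the equivalences $(1)\Leftrightarrow(2)$ and $(1)\Leftrightarrow(3)$.

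The step I expect to require the most care is the $K$-theoretic bookkeeping in the $\CO_\infty$ case, namely verifying that the $KK$-equivalence $A\sim A\otimes\CO_\infty$ is genuinely natural with respect to the inclusions of ideals, so that $K_0$-liftability transfers cleanly; the $\CO_2$ case is immediate from $KK$-triviality, and the invariance of topological dimension is routine once the ideal structure of tensor products with simple nuclear algebras is invoked.
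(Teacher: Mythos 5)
Your proposal is correct, and its logical skeleton coincides with the paper's: Condition~(K) is identified with topological dimension zero of $C^*(\CB,\CL,\theta,\CI_\af;\CJ)$ via \cite[Theorem 3.1]{CaK3}, Condition~(K) gives $K_0$-liftability via Theorem~\ref{K0-liftable}, and the Pasnicu--R{\o}rdam machinery handles the tensor products. The difference is one of citation granularity. The paper invokes \cite[Corollary 4.3]{PR2007}, which already states, for a separable $C^*$-algebra $A$, that $A\otimes\CO_2$ has real rank zero if and only if $A$ has topological dimension zero, and that $A\otimes\CO_\infty$ has real rank zero if and only if $A$ has topological dimension zero and is $K_0$-liftable; with that corollary in hand the three equivalences are immediate, which is why the paper's proof is three lines long. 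You instead cite \cite[Theorem 4.2]{PR2007} and re-derive the content of that corollary by hand: pure infiniteness of $A\otimes\CO_2$ and $A\otimes\CO_\infty$ (this is the Kirchberg--R{\o}rdam theorem that tensoring any $C^*$-algebra with $\CO_\infty$ yields a purely infinite algebra, rather than an ``absorption'' result in the usual sense --- a minor terminological slip), the identification of the ideal lattice of $A\otimes\CO_2$ (resp.\ $A\otimes\CO_\infty$) with that of $A$ using simplicity and nuclearity of $\CO_2$ and $\CO_\infty$ (hence equality of topological dimensions), vanishing of $K$-theory after tensoring with $\CO_2$, and the natural $KK$-equivalence $I\sim I\otimes\CO_\infty$ that transfers $K_0$-liftability between $A$ and $A\otimes\CO_\infty$. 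All of these steps are sound and are, in essence, the proof of \cite[Corollary 4.3]{PR2007} itself; so your argument buys self-containedness (and makes visible exactly which structural facts about $\CO_2$ and $\CO_\infty$ are being used) at the cost of length, while the paper buys brevity by outsourcing precisely this work to the cited corollary.
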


\begin{proof} 
(1)$\iff$(2) follows by \cite[Corollary 4.3]{PR2007} and \cite[Theorem 3.1]{CaK3}.

(1)$\implies$(3): If $(\CB,\CL,\theta)$ satisfies Condition (K), then $C^*(\CB,\CL,\theta, \CI_\af;\CJ) $ has topological dimension zero and $K_0$-liftable. Thus $C^*(\CB,\CL,\theta, \CI_\af;\CJ) \otimes \CO_{\infty}$ has real rank zero by \cite[Corollary 4.3]{PR2007}. 

(3)$\implies$(1): If $C^*(\CB,\CL,\theta, \CI_\af;\CJ) \otimes \CO_{\infty}$ has real rank zero, then  $C^*(\CB,\CL,\theta, \CI_\af;\CJ) $ has topological dimension zero by \cite[Corollary 4.3]{PR2007}. Thus,  $(\CB,\CL,\theta)$ satisfies Condition (K) by \cite[Theorem 3.1]{CaK3}.
 \end{proof}

\end{document}